\documentclass{article}
 \pdfoutput=1
\setlength\parindent{0pt}

\usepackage{amsmath,amsfonts,amssymb,latexsym,graphics,epsfig}
\usepackage{color}
\usepackage{amsthm}
\usepackage{graphicx,url}
\usepackage{hyperref}
\usepackage[english]{babel}
\usepackage{epsfig}
\usepackage{enumerate}
\usepackage{graphicx}
\usepackage{floatrow}
\usepackage{float}
\usepackage{pifont}
\usepackage{caption}
\usepackage{subfig}
\usepackage{indentfirst}
\newtheorem{thm}{Theorem}[section]
\newtheorem{prp}[thm]{Proposition}
\newtheorem{lema}[thm]{Lemma}
\newtheorem{defi}[thm]{Definition}
\newtheorem{cor}[thm]{Corollary}
\newtheorem{claim}[thm]{Claim}

\newtheorem{example}[thm]{Example}
\newtheorem{remark}[thm]{Remark}
\newtheorem{property}[thm]{Property}

\begin{document}
\title{An application of Hoffman graphs for spectral characterizations of graphs}
\author{Qianqian Yang$^\dag$, Aida Abiad$^\ddag$, Jack H. Koolen$^\dag$\footnote{J.H.K. is partially supported by the National Natural Science Foundation of China (No.11471009)}
\\ \\
{\small $^\dag$School of Mathematical Sciences, University of Science and Technology of China} \\
{\small 96 Jinzhai, Hefei, 230026, Anhui, PR China}\\
{\small xuanxue@mail.ustc.edu.cn}\\
{\small $^\ddag$Dept. of Quantitative Economics, Maastricht University} \\
{\small  Maastricht, The Netherlands}\\
{\small A.AbiadMonge@maastrichtuniversity.nl} \\
{\small $^*$Wen-Tsun Wu Key Laboratory of CAS}\\
{\small 96 Jinzhai, Hefei, 230026, Anhui, PR China}\\
{\small koolen@ustc.edu.cn}\\
}
\maketitle
\date{}
\begin{abstract}
In this paper, we present the first application of Hoffman graphs for spectral characterizations of graphs. In particular, we show that the $2$-clique extension of the $(t+1)\times(t+1)$-grid is determined by its spectrum when $t$ is large enough. This result will help to show that the Grassmann graph $J_2(2D,D)$ is determined by its intersection numbers as a distance regular graph, if $D$ is large enough.\\[7pt]
{Keywords:} Hoffman graph, graph eigenvalue, interlacing, walk-regular, spectral characterizations.\\
{2010 Math. Subj. Class.:} 05C50, 05C75.
\end{abstract}
\section{Introduction}\label{intro}
Bang, Van Dam and Koolen \cite{BDK2008} showed that the Hamming graphs $H(3,q)$ are determined by their spectrum if $q \geq 36$. In this paper, we will show a similar result for the $2$-clique extension of the square grid. (For definitions we refer the reader to the next section.) In this paper we will show the following result:
\begin{thm}\label{maintheorem}
Let $G$ be a graph with spectrum
$$\big\{(4t+1)^{1},~(2t-1)^{2t},~(-1)^{(t+1)^{2}},~(-3)^{t^{2}}\big\}.$$
Then there exists a positive constant $C$ such that if $t \geq C$, then $G$ is the $2$-clique extension of the $(t+1) \times (t+1)$-grid.
\end{thm}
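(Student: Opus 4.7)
The plan is to reduce the spectral characterisation to the classical spectral characterisation of the rook graph by extracting, via Hoffman graph theory, a canonical partition of $V(G)$ into $2$-cliques (``fibers''). Once this partition is exhibited, the quotient graph can be identified by its spectrum, and the $2$-clique extension structure follows.

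First, I would read off elementary consequences of the spectrum: $G$ has order $n = 1 + 2t + (t+1)^{2} + t^{2} = 2(t+1)^{2}$, is connected since $4t+1$ is a simple eigenvalue, and is regular of valency $4t+1$. Using the power sums $\sum_i \lambda_i^{\ell}$ for $\ell = 2,3,4$, I would compute the number of edges, closed walks of length $3$, and $4$-cycles, thereby obtaining sharp averaged information on the number of edges of each local type and the number of triangles through each vertex.

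The crux of the argument is to show that $V(G)$ admits a partition into $(t+1)^{2}$ fibers of size $2$, where each fiber is a pair $\{u,v\}$ of twin vertices, i.e.\ $u \sim v$ and $N(u) \setminus \{v\} = N(v) \setminus \{u\}$. Since the smallest eigenvalue of $G$ equals $-3$, the Hoffman graph associated with $G$ has smallest eigenvalue at least $-3$, and I would exploit the structure theory for such Hoffman graphs (Woo--Neumaier and the refinements developed in the paper) to pin down the local structure of $G$. Concretely, the fiber edges should turn out to be exactly the edges $uv$ with $\lambda(u,v) = 4t$, the maximum value allowed by the spectrum; I would then verify that the resulting ``twin'' relation is an equivalence and that every class has size exactly $2$. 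Making these local statements rigorous requires the interlacing bounds and Hoffman-graph machinery, and this is where the hypothesis that $t$ be large is consumed.

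Once the fiber partition is established, let $\bar G$ be the quotient on the $(t+1)^{2}$ fibers, two fibers being joined iff the corresponding $2$-cliques span all four possible edges in $G$. Decomposing $\mathbb{R}^{V(G)}$ into the symmetric and antisymmetric parts inside each fiber, the antisymmetric summand contributes exactly the eigenvalue $-1$ with multiplicity $(t+1)^{2}$, while the symmetric summand sends each eigenvalue $\mu$ of $\bar G$ to $2\mu + 1$ in $G$. Reading this correspondence off, $\bar G$ has spectrum $\{(2t)^{1}, (t-1)^{2t}, (-2)^{t^{2}}\}$, the spectrum of the rook graph $K_{t+1} \square K_{t+1}$. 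By the classical theorem of Shrikhande (valid for $t+1 \neq 4$), this spectrum determines $\bar G$ uniquely, so $\bar G \cong K_{t+1} \square K_{t+1}$ and $G$ is its $2$-clique extension, as required.

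The principal obstacle is the fiber-extraction step: the spectrum alone yields only global invariants, whereas here one needs a precise combinatorial partition of $V(G)$ into $(t+1)^{2}$ specific $2$-cliques. The Hoffman graph framework is precisely the tool that bridges this gap, and the constant $C$ arises from the quantitative hypotheses needed to make the local Hoffman-graph arguments conclusive.
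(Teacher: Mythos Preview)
Your overall strategy coincides with the paper's: use Hoffman-graph structure theory to extract a partition of $V(G)$ into $(t+1)^2$ twin pairs, pass to the quotient, and invoke Shrikhande. The endgame you describe (the symmetric/antisymmetric decomposition giving the spectrum $\{(2t)^1,(t-1)^{2t},(-2)^{t^2}\}$ for the quotient, then Shrikhande for $t+1\neq 4$) is exactly how the paper finishes in Proposition~\ref{case1}.

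Where your proposal falls short is the fiber-extraction step, which you treat as a black box. Knowing only $\lambda_{\min}(G)=-3$ and appealing to generic Woo--Neumaier-style structure theory does not suffice; the paper needs a specific external theorem (Theorem~\ref{kyy} from \cite{KYY}) guaranteeing that for $t$ large any cospectral $G$ is the slim graph of a $2$-fat Hoffman graph whose indecomposable factors lie in a prescribed finite list. This is where the constant $C$ enters, and without it nothing starts. Even granted this, the paper still requires substantial work: two of the five candidate factors are excluded via positive-semidefiniteness of $(A-(2t-1)I)(A+I)$; the orders of all quasi-cliques are determined by interlacing and walk-regularity; and a double-counting argument then leaves \emph{two} global configurations. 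Only one of them yields the twin partition; the other (one $(2t{+}2)$-clique together with $2t{+}2$ cliques of order $2t{+}1$) contains no twin pairs at all and must be eliminated separately by exhibiting an equitable partition whose quotient matrix has irrational eigenvalues $t-2\pm\sqrt{t^2-1}$. Your heuristic that the fibers are ``exactly the edges $uv$ with $\lambda_{u,v}=4t$'' is true in the target graph, but nothing in your outline rules out a cospectral $G$ with \emph{no} such edges; the paper's Case~2 is precisely such a putative configuration, and disposing of it is not automatic.
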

\begin{remark}
\item[$(i)$] The current estimates for $C$ are unrealistic high, since the proof implicitly uses Ramsey theory.
\item[$(ii)$] In \cite{ABH2015} it was shown that the $2$-coclique extension of the square grid is usually not determined by its spectrum.
\end{remark}

A motivation came from the study of Grassmann graphs. Gavrilyuk and Koolen studied \cite{GK2016} the question whether the Grassmann graph $J_2(2D, D)$ is determined as a distance-regular graph by its intersection numbers. (For definitions of distance-regular graphs and related notions we refer to \cite{BCN} and \cite{VKT}.) They showed that for any vertex, the subgraph induced by the neighbours of this vertex has the spectrum of the $2$-clique extension of a certain square grid. They used the main theorem of this paper to show that the Grassmann graph $J_2(2D, D)$ is determined as a distance-regular graph by its intersection numbers, if $D$ is large enough.

Another motivation for studying the $2$-clique extension of the $(t+1)\times(t+1)$-grid is because this is a connected regular graph with four distinct eigenvalues. Regular graphs with four distinct eigenvalues have been previously studied \cite{D1995}, and a key observation that we will use is that these graphs are walk-regular, which implies strong combinatorial information on the graph.

The starting point for our work is a result by Koolen et al. \cite{KYY}:
\begin{thm}\cite{KYY}\label{kyy}
There exists a positive integer $t$ such that any graph, that is cospectral with the $2$-clique extension of $(t_1 \times t_2)$-grid is the slim graph of a $2$-fat $\big\{$\raisebox{-0.5ex}{\includegraphics[scale=0.10]{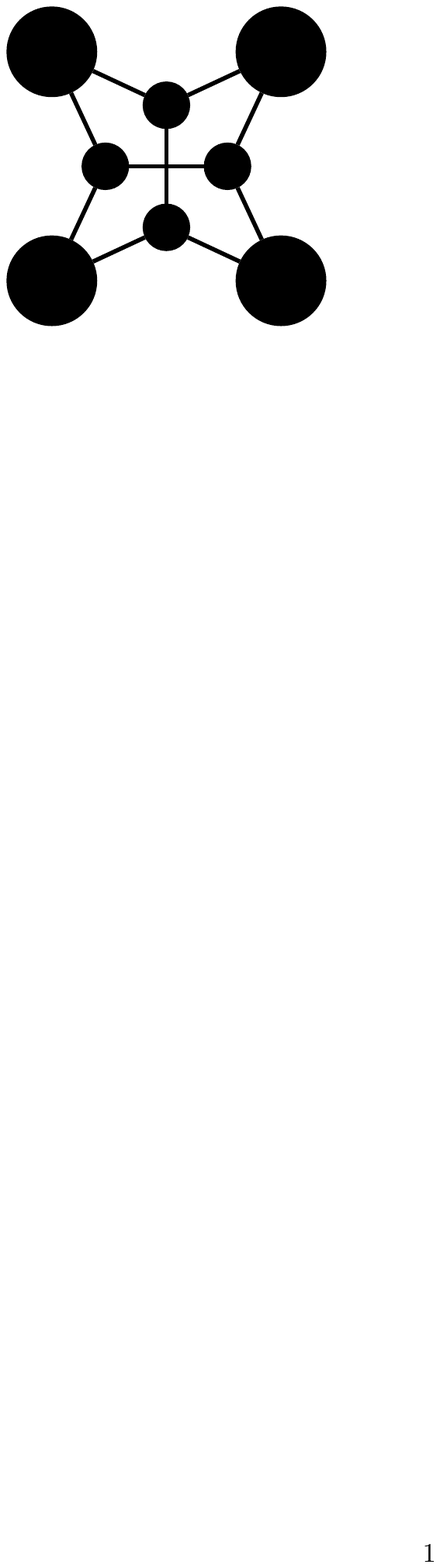},\includegraphics[scale=0.10]{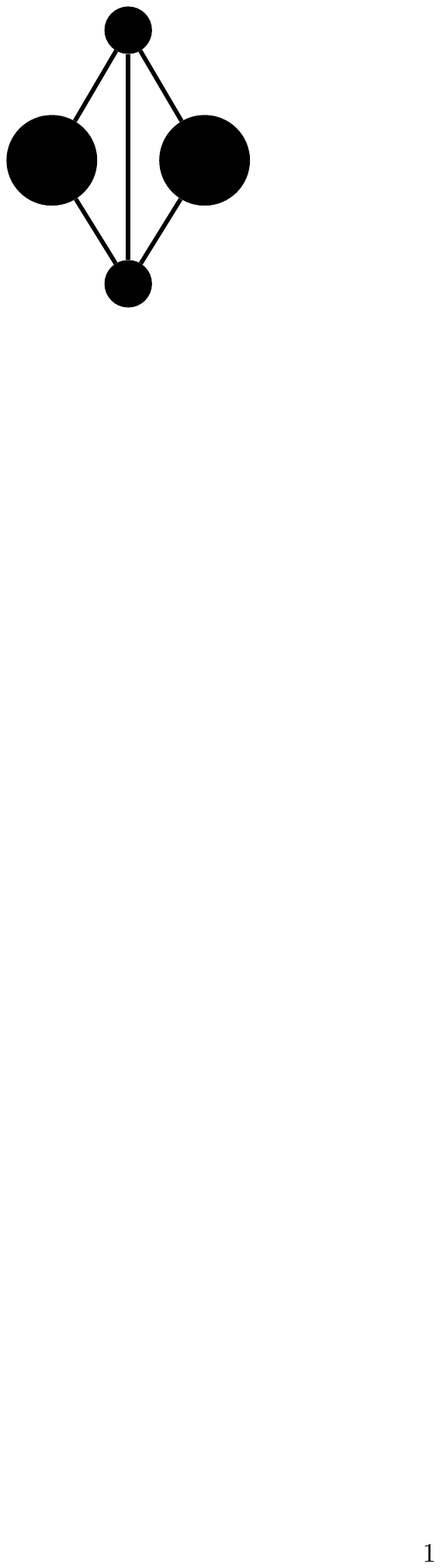},\includegraphics[scale=0.10]{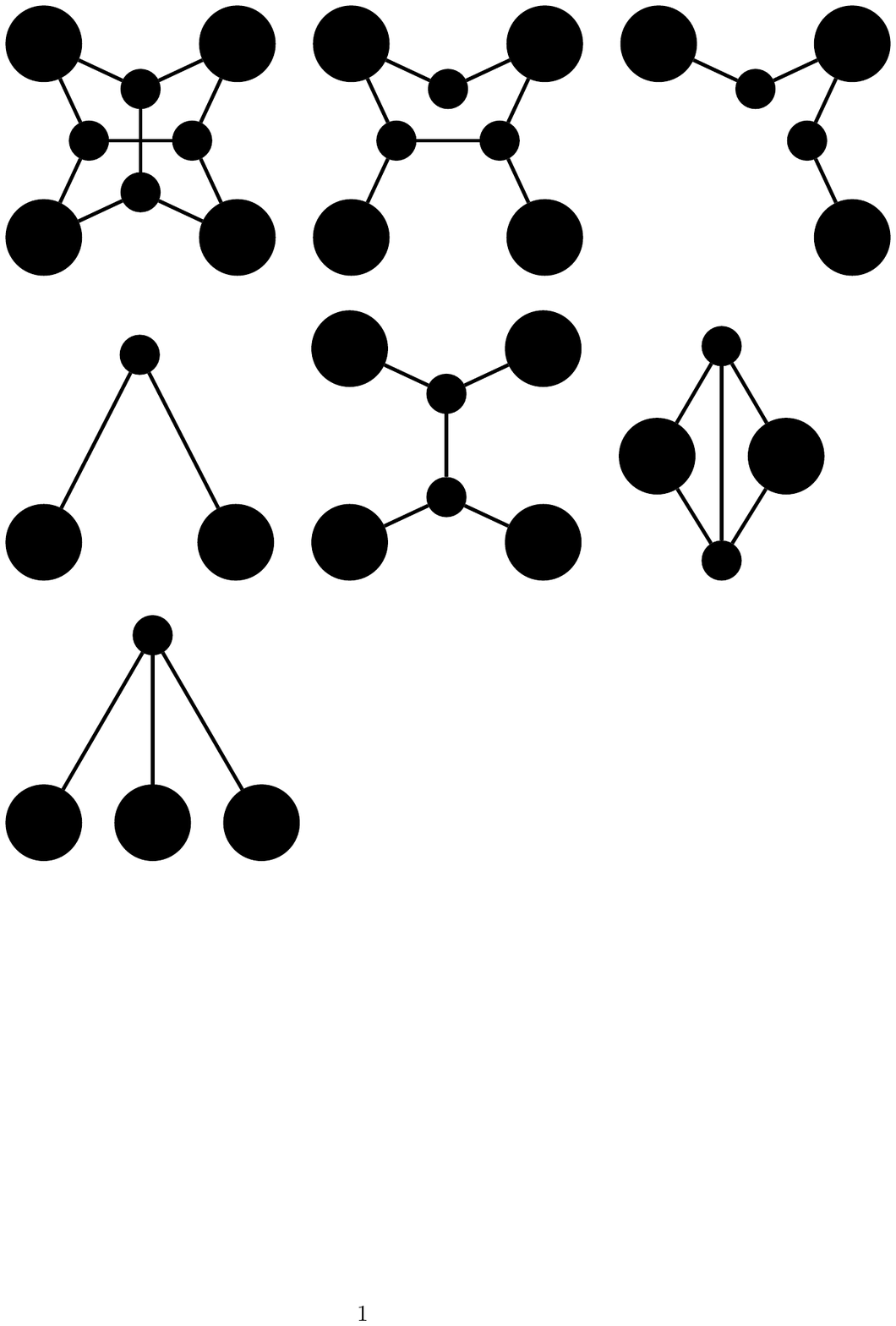}}$\big\}$-line Hoffman graph for all $t_1 \geq t_2 \geq t$.
\end{thm}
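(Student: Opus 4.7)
The plan is to construct an appropriate Hoffman graph $\mathfrak{h}$ whose slim graph is $G$ by first identifying a family of cliques in $G$ that will play the role of neighborhoods of fat vertices, and then verifying that the resulting $\mathfrak{h}$ has smallest eigenvalue at least $-3$ with every slim vertex adjacent to at least two fat vertices, of one of the three prescribed local types.

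First I would extract the numerical invariants of $G$ from its spectrum: $G$ is $k$-regular with $k = 2(t_1 + t_2) - 3$, has $2 t_1 t_2$ vertices, and smallest eigenvalue exactly $-3$. Since $G$ has at most five distinct eigenvalues and is regular, it is walk-regular; in particular the number of closed walks of any given length from a vertex, and hence the number of edges in each neighborhood, the number of common neighbors of an edge, and the number of triangles through each vertex, are all constants determined by the spectrum. These constants coincide with those of the putative model (the actual $2$-clique extension of the grid), providing the detailed local statistics on which the rest of the argument rests.

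The heart of the argument is to exhibit a family $\mathcal{C}$ of cliques of $G$ covering each vertex at least twice, with pairwise intersections of size at most one, such that the Hoffman graph obtained by attaching one fat vertex to each $C \in \mathcal{C}$ has smallest eigenvalue $\geq -3$ and is of the required line type. To build $\mathcal{C}$, I would use Hoffman's clique bound (with $\lambda_{\min}(G) = -3$) to pin down the maximum clique size, which should be of order $2 t_i$ for the two possible ``grid directions''; then use walk-regularity to count the cliques of that size through each vertex; finally show by an exchange/optimality argument, together with a count of the ``non-clique'' edges incident to each vertex, that $\mathcal{C}$ can be chosen with the correct intersection pattern. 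Once $\mathcal{C}$ is in place, verifying $\lambda_{\min}(\mathfrak{h}) \geq -3$ is a direct quasi-clique eigenvalue calculation, and that only the three prescribed line types arise follows from the (finite) classification of indecomposable Hoffman graphs with smallest eigenvalue at least $-3$ applied to the induced sub-Hoffman-graph at each fat vertex.

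The main obstacle is precisely the clique-cover construction: walk-regularity and Hoffman-type inequalities force the \emph{existence} of many large cliques through each vertex, but assembling them into a cover with the correct combinatorial intersection pattern requires substantial Ramsey-type input to ensure that local deviations from the expected grid-like structure cannot persist. This is exactly why the theorem requires $t$ to be at least some implicit and unrealistically large constant: the Ramsey bounds controlling this local-to-global step blow up quickly in the number of ``exceptional'' local configurations one must rule out. By contrast, once a clean clique cover is available, the verification of the fat-neighbor count and the enumeration of line types is routine.
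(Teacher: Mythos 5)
You should first be aware that the paper does not prove Theorem \ref{kyy} at all: it is imported verbatim from \cite{KYY} and used as a black box (only Theorem \ref{maintheorem1intro} is proved here), so there is no in-paper proof to compare your attempt against. Judged on its own merits, your outline has the right general shape for results of this kind --- use the spectrum to pin down local statistics, manufacture a clique family to serve as the neighbourhoods of fat vertices, and then verify the eigenvalue and line-type conditions --- but it contains concrete errors and leaves the genuinely hard step unargued.

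Two specific problems. First, walk-regularity gives you the constancy of the diagonal entries $A^{\ell}_{xx}$, hence of the number of triangles through a vertex and of the number of edges inside each neighbourhood, but it does \emph{not} make the number of common neighbours of an individual edge a spectral constant; the paper's own identity (\ref{ways}) expresses $A^{3}_{(x,y)}$ in terms of a quantity $\lambda_{x,y}$ that genuinely varies from edge to edge. So the local statistics you propose to ``rest the argument on'' are weaker than you claim. Second, your requirement that the cliques in $\mathcal{C}$ have pairwise intersections of size at most one is wrong even for the model graph: in the $2$-clique extension of the grid, the row-clique and the column-clique through a given grid position share exactly the two vertices of the corresponding $2$-clique, which is precisely the factor $\mathfrak{g}_4$ of Lemma \ref{commonneighborsg'}$(i)$ and the dominant local configuration in the intended decomposition. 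A cover with your intersection pattern cannot exist, so the construction would fail at the outset. Finally, the local-to-global step --- showing that the large cliques forced by the Hoffman bound and Ramsey theory can actually be assembled into a direct sum of Hoffman graphs from the prescribed family, rather than merely into some Hoffman graph with smallest eigenvalue at least $-3$ (a strictly weaker conclusion) --- is the entire content of \cite{KYY}, and ``an exchange/optimality argument'' is a placeholder rather than a proof of it.
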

As a direct consequence, we obtain the following corollary:
\begin{cor}\label{corokyy}
There exists a positive integer $T$ such that any graph, that is cospectral with the $2$-clique extension of $(t+1) \times (t+1)$-grid is the slim graph of a $2$-fat $\big\{$\raisebox{-0.5ex}{\includegraphics[scale=0.10]{photo1},\includegraphics[scale=0.10]{photo2},\includegraphics[scale=0.10]{photo3}}$\big\}$-line Hoffman graph for all $t\geq T$.
\end{cor}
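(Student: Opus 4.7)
The plan is to derive Corollary \ref{corokyy} as a direct specialization of Theorem \ref{kyy} to the square case. Let $t_0$ denote the positive integer whose existence is asserted in Theorem \ref{kyy}. I would set $T := \max\{1,\, t_0 - 1\}$ and verify that this choice works: for any $t \geq T$, one has $t+1 \geq t_0$, so taking $t_1 = t_2 = t+1$ in Theorem \ref{kyy} shows that any graph cospectral with the $2$-clique extension of the $(t+1)\times(t+1)$-grid is the slim graph of a $2$-fat Hoffman graph of the prescribed line type, which is exactly the statement required.

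There is essentially no substantive obstacle here, since the corollary is obtained by substituting $t_1 = t_2 = t+1$ into the hypothesis of the more general rectangular statement of Theorem \ref{kyy}. The only point that merits care is the index bookkeeping between the parameter $t$ in the corollary (which indexes the grid as $(t+1)\times(t+1)$) and the threshold $t_0$ supplied by the theorem; this is what forces the shift in the choice $T = t_0 - 1$. I would package the result as a standalone corollary precisely because the main theorem of the paper concerns only square grids, so having the conclusion of Theorem \ref{kyy} in this specialized form will streamline subsequent references to the Hoffman-graph structure when proving Theorem \ref{maintheorem}.
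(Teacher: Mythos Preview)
Your proposal is correct and matches the paper's approach exactly: the paper states Corollary~\ref{corokyy} as a direct consequence of Theorem~\ref{kyy} without further argument, and your specialization $t_1 = t_2 = t+1$ together with the index shift $T = \max\{1, t_0 - 1\}$ is precisely what is needed to make that inference explicit.
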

In view of Corollary \ref{corokyy}, our Theorem \ref{maintheorem} will follow from the following result:
\begin{thm}\label{maintheorem1intro}
Let $G$ be a graph cospectral with the $2$-clique extension of the $(t+1)\times (t+1)$-grid. If $G$ is the slim graph of a $2$-fat $\big\{$\raisebox{-0.5ex}{\includegraphics[scale=0.10]{photo1},\includegraphics[scale=0.10]{photo2},\includegraphics[scale=0.10]{photo3}}$\big\}$-line Hoffman graph, then $G$ is the $2$-clique extension of the $(t+1)\times (t+1)$-grid when $t>4$.
\end{thm}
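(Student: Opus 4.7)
The plan is to exploit the Hoffman decomposition provided by the hypothesis to recover a family of maximal cliques in $G$, and then to organize these cliques into two parallel classes of size $t+1$, playing the role of rows and columns of a grid, by combining walk-regularity with a case analysis of how the three admissible building blocks, which I will denote $H_1, H_2, H_3$, can meet at a common slim vertex.

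First I would extract numerical invariants from the spectrum. Being a connected regular graph with four distinct eigenvalues, $G$ is walk-regular; in particular the valency $4t+1$, the number of triangles through each vertex, and the number of common neighbours of any adjacent pair are all constants, computable from the given eigenvalues and multiplicities, and they match what the target graph produces (each vertex lying in exactly two maximal cliques of size $2(t+1)$ that overlap in an edge). Let $\mathcal{H}$ be a 2-fat $\{H_1,H_2,H_3\}$-line Hoffman graph whose slim graph is $G$. Each fat vertex $f$ of $\mathcal{H}$ determines a clique $C_f\subseteq V(G)$ consisting of its slim neighbours, and 2-fatness means every vertex of $G$ lies in at least two such cliques. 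Using that $H_1, H_2, H_3$ are the only locally admissible configurations, each $C_f$ has a controlled size and the incidence at any slim vertex is rigid. I would then show that every slim vertex has exactly two fat neighbours and that every $C_f$ has a common size $s$; comparing the triangle count from walk-regularity against the count from the clique decomposition should force $s=2(t+1)$ and in turn the number of fat vertices to be $2(t+1)$.

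The crucial step is to prove that these $2(t+1)$ cliques split into two classes of $t+1$, each class partitioning $V(G)$, with cliques from different classes intersecting in exactly two vertices forming a 2-clique. The admissibility rules coming from $H_1, H_2, H_3$ restrict how two fat vertices can share slim neighbours, and combined with a double counting on the number of shared edges between cliques, this should yield the complete bipartite incidence pattern on $(t+1)+(t+1)$ cliques. The hypothesis $t>4$ is presumably used here to exclude small exceptional configurations --- for instance, decompositions into three or more parallel classes, or degenerate intersections --- which might otherwise remain locally consistent with the building blocks. Once the bipartition is established, the incidence structure between slim vertices and cliques is that of the point-line incidence of the $(t+1)\times(t+1)$-grid, so $G$ coincides with its 2-clique extension.

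The main obstacle I anticipate is exactly this bipartition step: ruling out non-grid incidence patterns among the $2(t+1)$ cliques produced by the fat vertices. This is where the specific shapes of the three Hoffman building blocks, together with the threshold $t>4$, should enter in an essential way; all earlier steps reduce to walk-regular counting and to reading off numerical consequences of the Hoffman decomposition.
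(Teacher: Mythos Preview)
Your outline captures the broad architecture --- use the Hoffman decomposition to produce a family of near-cliques indexed by fat vertices, then force these into the row/column pattern of the grid --- but it underestimates where the real work lies, and the intended counting shortcut does not go through.

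First, the claim that every slim vertex has exactly two fat neighbours and that every $C_f$ has size $2(t+1)$ does \emph{not} follow from the triangle count alone. Among the admissible indecomposable factors there is one, call it $\mathfrak{g}_5$, consisting of a single slim vertex with three fat neighbours; this gives a slim vertex sitting in three quasi-cliques of sizes $(b_1,b_2,b_3)$ with $b_1+b_2+b_3=4t+4$, and the triangle identity $A^3_{(x,x)}=8t^2+4t$ is compatible with several such triples once one accounts for edges running between the quasi-cliques. The paper needs a separate argument (bounding cross-edges via a positive semidefinite matrix and a delicate inequality) to restrict $(b_1,b_2,b_3)$ to three specific possibilities when $t>4$, and even after that the factor $\mathfrak{g}_5$ is not eliminated: a global double-count of vertex/quasi-clique incidences still leaves a second scenario with one quasi-clique of order $2t+2$ and $2t+2$ quasi-cliques of order $2t+1$. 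Ruling this scenario out is one of the two main blocks of the proof and requires building an explicit equitable partition of $G$ whose quotient matrix has irrational eigenvalues, contradicting Lemma~\ref{quotienteigenvalues1}. Nothing in your sketch addresses this case.

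Second, even in the good scenario (all quasi-cliques of order $2t+2$), the quasi-cliques are not a priori cliques: one of the admissible factors, $\mathfrak{g}_3$, has two non-adjacent slim vertices under a common fat vertex. You must first exclude $\mathfrak{g}_3$ (the paper does this via the triangle count and a clique-bound from interlacing), after which every vertex lies in a factor isomorphic to $\mathfrak{g}_4$ and has a unique ``twin''. Finally, the paper does not directly exhibit the bipartition into row and column cliques; instead it quotients by the twin relation, observes that the quotient is strongly regular with the parameters of the $(t+1)\times(t+1)$-grid, and invokes Shrikhande's uniqueness theorem. Your ``double counting on shared edges'' would need to reprove that uniqueness, which is a theorem in its own right.
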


The main focus of this paper is to prove Theorem \ref{maintheorem1intro}, and it is organized as follows. In Section \ref{preliminaries}, we review some preliminaries on graphs, interlacing and Hoffman graphs. Section \ref{sec:kyy} considers the graph cospectral with the $2$-clique extension of the $(t+1)\times(t+1)$-grid, which is the slim graph of the Hoffman graph having possible indecomposable factors isomorphic to the Hoffman graphs in Figure \ref{fg'}. In Section \ref{fordding}, we forbid two of the mentioned Hoffman graphs to occur as indecomposable factors. In Section \ref{determingorder}, the order of the quasi-cliques of the possible indecomposable factors is determined. Finally, in Section \ref{proof}, we finish the proof of Theorem \ref{maintheorem1intro}.

\section{Preliminaries}\label{preliminaries}
Throughout this paper we will consider only undirected graphs without loops or multiple edges. Suppose that $\Gamma$ is a graph with vertex set $V(\Gamma)$ with $|V(\Gamma)|=n$ and edge set $E(\Gamma)$. Let $A$ be the adjacency matrix of $\Gamma$, then the eigenvalues of $\Gamma$ are the eigenvalues of $A$. Let $\lambda_0,\lambda_1,\ldots,\lambda_t$ be the distinct eigenvalues of $\Gamma$ and $m_i$ be the multiplicity of $\lambda_i$ ($i=0,1,\ldots,t$). Then the multiset $\{\lambda_0^{m_0},\lambda_1^{m_1},\ldots,\lambda_t^{m_t}\}$ is called the \emph{spectrum} of $\Gamma$.

Two graphs are called \emph{cospectral} if they have the same spectrum.

For a vertex $x$, let $\Gamma_i(x)$ be the set of vertices at distance $i$ from $x$. When $i=1$, we also denote it by $N_\Gamma(x)$. For two distinct vertices $x$ and $y$, we denote the number of common neighbors between them by $\lambda_{x,y}$ if $x$ and $y$ are adjacent, and by $\mu_{x,y}$ if they are not.

Recall that the \emph{Kronecker product} $M_1\otimes M_2$ of two matrices $M_1$ and $M_2$ is obtained by replacing the $ij$-entry of $M_1$ by $({M_1})_{ij}M_2$ for all $i$ and $j$.  If $\tau$ and $\theta$ are eigenvalues of $M_1$ and $M_2$, then $\tau\theta$ is an eigenvalue of $M_1\otimes M_2$ \cite{Ch&G}.

Recall that a ($c$)-\emph{clique} (or complete graph) is a graph (on $c$ vertices) in which every pair of vertices is adjacent.

For an integer $q\geq 1$, the $q$-\emph{clique extension of} $\Gamma$ is the graph $\widetilde{\Gamma}$ obtained from $\Gamma$ by replacing each vertex $x\in V(\Gamma)$ by a clique $\widetilde{X}$ with $q$ vertices, such that $\tilde{x}\sim\tilde{y}$ (for $\tilde{x}\in\widetilde{X},~\tilde{y}\in\widetilde{Y},~\widetilde{X}\neq\widetilde{Y}$) in $\widetilde{\Gamma}$ if and only if $x\sim y$ in $\Gamma$.
If $\widetilde{\Gamma}$ is the $q$-clique extension of $\Gamma$, then $\widetilde{\Gamma}$ has adjacency matrix $J_q\otimes (A+I_n)-I_{qn}$, where $J_q$ is the all one matrix of size $q$ and $I_n$ is the identity matrix of size $n$.

In particular, if $q=2$ and $\Gamma$ has spectrum

 \begin{equation}\label{spectrum1}
 \big\{\lambda_0^{m_0},\lambda_1^{m_1},\ldots,\lambda_t^{m_t}\big\},
 \end{equation}
then it follows that the spectrum of $\widetilde{\Gamma}$ is
 \begin{equation}\label{spectrum2}
\big\{(2\lambda_0+1)^{m_0},(2\lambda_1+1)^{m_1},\ldots,(2\lambda_t+1)^{m_t},(-1)^{(m_0+m_1+\cdots+m_t)}\big\}.
 \end{equation}

In case that $\Gamma$ is a connected regular graph with valency $k$ and with adjacency matrix $A$ having exactly four distinct eigenvalues $\{\lambda_0=k,\lambda_1,\lambda_2,\lambda_3\}$, then $A$ satisfies the following (see for example \cite{H1963}):
\begin{equation}\label{HoffmanPolynomial}
A^{3}-\left(\displaystyle\sum_{i=1}^{3}\lambda_i\right)A^{2}+\left(\displaystyle\sum_{1\leq i<j\leq 3}\lambda_i\lambda_j\right)A-\lambda_1\lambda_2\lambda_3I=\frac{\prod_{i=1}^{3}(k-\lambda_i)}{n}J.
\end{equation}

We also need to introduce an important spectral tool that will be used throughout this paper: eigenvalue interlacing.
\begin{lema}\cite[Interlacing]{H1995}
Let $A$ be a real symmetric $n\times n$  matrix with
eigenvalues $\lambda_1\ge\cdots\ge \lambda_n$. For some $m<n$,
let $S$ be a real $n\times m$ matrix with orthonormal columns,
$S^{T}S=I$, and consider the matrix $B=S^{T}AS$,
with eigenvalues $\mu_1\ge\cdots\ge \mu_m$. Then,
\begin{itemize}
\item[$(i)$] the eigenvalues of $B$ interlace those of
    $A$, that is,
\begin{equation}
\lambda_i\ge \mu_i\ge \lambda_{n-m+i},\qquad i=1,\ldots, m.
\end{equation}
\item[$(ii)$] if there exists an integer $j\in \{1,2,\ldots,m\}$ such that $\lambda_{i}=\mu_{i}$ for $1\leq i\leq j$ and $\lambda_{n-m+i}=\mu_{i}$ for $j+1\leq i \leq m$, then the interlacing is tight and $SB=AS$.
\end{itemize}
\end{lema}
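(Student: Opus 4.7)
The plan is to prove both parts by the standard variational (Courant--Fischer) characterization of eigenvalues of a real symmetric matrix. The key observation throughout is that because $S^TS=I$, the linear map $x \mapsto Sx$ is an isometry from $\mathbb{R}^m$ into $\mathbb{R}^n$, and the Rayleigh quotients for $A$ and $B$ match along this embedding: for every $x \in \mathbb{R}^m$, $\|Sx\|=\|x\|$ and $(Sx)^TA(Sx)=x^TBx$.

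For part $(i)$, I would use the two-sided min--max formulas
\[
\lambda_i = \max_{\dim W=i}\ \min_{0\neq y\in W}\frac{y^TAy}{y^Ty},\qquad \lambda_{n-m+i}=\min_{\dim W=m-i+1}\ \max_{0\neq y\in W}\frac{y^TAy}{y^Ty},
\]
together with the analogous formulas for $B$ and $\mu_i$. Restricting the outer extremum for $A$ to subspaces of the special form $W=SU$ with $U\subseteq \mathbb{R}^m$ (and using the Rayleigh-quotient identity above) gives $\mu_i\le\lambda_i$ from the first formula and $\lambda_{n-m+i}\le\mu_i$ from the second. This step is essentially routine once the isometry point is noted.

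For part $(ii)$, my plan is to reformulate the conclusion as $F:=(I-SS^T)AS=0$, which is equivalent to $\mathrm{col}(S)$ being $A$-invariant and hence to $AS=SS^TAS=SB$. Using $B=S^TAS$ one directly computes $F^TF=S^TA^2S-B^2$, and consequently, for any unit eigenvector $u$ of $B$ with $Bu=\mu u$,
\[
\|ASu-\mu Su\|^2 \;=\; u^T(S^TA^2S-B^2)u \;=\; \|Fu\|^2.
\]
Thus it suffices to show that $Fu=0$ for every eigenvector $u$ of $B$. To this end I would exploit the equality case of Courant--Fischer: when $\mu_i=\lambda_i$ for $i\le j$, the $i$-dimensional subspace $S\cdot\mathrm{span}(u_1,\ldots,u_i)$ is extremal for the $A$-max as well, forcing it into $\mathrm{span}(v_k:\lambda_k\ge\lambda_i)$; symmetrically, when $\mu_i=\lambda_{n-m+i}$ for $i>j$, the complementary subspace $S\cdot\mathrm{span}(u_i,\ldots,u_m)$ lies in $\mathrm{span}(v_k:\lambda_k\le\lambda_{n-m+i})$. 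Combining these two families of inclusions over all $i$ pins each $Su_i$ into a single $A$-eigenspace, so $ASu_i=\mu_i Su_i$ and hence $Fu_i=0$. Since the $u_i$ span $\mathbb{R}^m$, $F=0$ as desired.

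The main obstacle is the equality-case analysis in part $(ii)$. Part $(i)$ is a direct application of min--max, but showing that tight interlacing forces every $Su_i$ to sit inside a single eigenspace of $A$ requires a careful induction that must handle eigenvalue multiplicities and must simultaneously absorb the ``top'' hypothesis for $i\le j$ and the ``bottom'' hypothesis for $i>j$ in order to cover a full orthonormal basis of $\mathbb{R}^m$.
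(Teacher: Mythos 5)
The paper does not prove this lemma at all: it is quoted verbatim from Haemers \cite{H1995} and used as a black box, so there is no in-paper proof to compare against; the natural benchmark is Haemers' original argument, which your plan follows in spirit. Your part $(i)$ is correct as written. In part $(ii)$, the reduction to $F:=(I-SS^{T})AS=0$ together with the identity $\|ASu-\mu Su\|^{2}=u^{T}(S^{T}A^{2}S-B^{2})u=\|Fu\|^{2}$ is a clean and correct way to organize the endgame (though once you know each $Su_i$ is a $\mu_i$-eigenvector of $A$, the chain $ASu_i=\mu_iSu_i=SBu_i$ over a basis already gives $AS=SB$ without introducing $F$).

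The genuine gap is the step ``the $i$-dimensional subspace $S\cdot\mathrm{span}(u_1,\ldots,u_i)$ is extremal for the $A$-max as well, forcing it into $\mathrm{span}(v_k:\lambda_k\ge\lambda_i)$''. Extremality in the Courant--Fischer max--min does \emph{not} force this inclusion. Take $A=\mathrm{diag}(5,3,1)$ and $W=\mathrm{span}\{e_2,\ e_1+e_3\}$: the Rayleigh quotient of $A$ is identically $3=\lambda_2$ on $W$, so $W$ attains the max--min value for $i=2$, yet $W\not\subseteq\mathrm{span}\{e_1,e_2\}$. (Correspondingly, for $S$ with columns $e_2$ and $(e_1+e_3)/\sqrt{2}$ one gets $B=3I_2$, the interlacing is not tight, and indeed $AS\ne SB$.) So the inclusion cannot be extracted from the single equality $\mu_i=\lambda_i$; it needs the cumulative hypothesis $\mu_\ell=\lambda_\ell$ for all $\ell\le i$ and an induction on $\ell$, at each step selecting a nonzero vector in $\mathrm{span}(u_1,\ldots,u_\ell)\cap\bigl(S^{T}\mathrm{span}(v_1,\ldots,v_{\ell-1})\bigr)^{\perp}$ (nonempty by a dimension count) and re-choosing the eigenvector bases of $A$ and $B$ along the way --- which is exactly Haemers' argument. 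Once those inclusions are established, your final pinning step is fine, because $R_A(Su_i)=R_B(u_i)=\lambda_i$ equals the minimum of $R_A$ on $\mathrm{span}(v_k:\lambda_k\ge\lambda_i)$ and is attained there only on the $\lambda_i$-eigenspace. You correctly flagged this induction as the main obstacle, but the specific mechanism you propose for producing the inclusions (equality case at a single level) is precisely the step that fails.
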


Two interesting particular cases of interlacing are obtained by choosing appropriately the matrix $S$. If $S=\left(
                                                                                                               \begin{array}{cc}
                                                                                                                 I & O \\
                                                                                                                 O & O \\
                                                                                                               \end{array}
                                                                                                             \right)$, then $B$ is just a principal submatrix of $A$. If $\pi=\{V_{1},\ldots,V_{m}\}$ is a partition of the
vertex set $V$, with each $V_{i}\neq \emptyset$, we can take for $\widetilde{B}$ the so-called quotient matrix of $A$ with respect to $\pi$.
Let $A$ be partitioned according to $\pi$:

$$A=\left( \begin{array}{ccc}
A_{1,1} & \cdots & A_{1,m} \\
\vdots &  & \vdots \\
A_{m,1} & \cdots & A_{m,m}
 \end{array} \right),$$

where $A_{i,j}$ denotes the submatrix (block) of $A$ formed by
rows in $V_{i}$ and columns in $V_{j}$. The \emph{characteristic matrix} $C$ is the $n\times m$ matrix whose $j^{\text{th}}$ column is the characteristic vector of $V_j$ ($j=1,\ldots,m$).

Then, the \emph{quotient matrix} of $A$ with respect to $\pi$ is the $m\times m$ matrix $\widetilde{B}$ whose entries are the average row sums of the blocks of $A$, more precisely:

\begin{equation*}
(\widetilde{B})_{i,j}=\frac{1}{|V_{i}|}(C^{T}AC)_{i,j}.
\end{equation*}

The partition $\pi$ is called \emph{equitable} (or \emph{regular}) if each block $A_{i,j}$ of $A$ has constant row (and column) sum, that is, $C\widetilde{B}=AC$.

\begin{lema}\label{quotienteigenvalues2}
Suppose $\widetilde{B}$ is the quotient matrix of a symmetric partitioned matrix $A$.
 \item[$(i)$] The eigenvalues of $\widetilde{B}$ interlace the eigenvalues of $A$.
 \item[$(ii)$] If the interlacing is tight, then the partition $\pi$ is equitable.
\end{lema}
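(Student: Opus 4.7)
The plan is to reduce both parts to the preceding Interlacing Lemma by producing an $n\times m$ matrix $S$ with orthonormal columns for which $S^{T}AS$ is similar to $\widetilde{B}$. Let $C$ be the characteristic matrix of $\pi$ as defined just before the lemma, and set $D=\operatorname{diag}(\sqrt{|V_1|},\ldots,\sqrt{|V_m|})$. Because the parts $V_j$ are pairwise disjoint, the columns of $C$ are orthogonal and $C^{T}C=D^{2}$, so $S:=CD^{-1}$ has $S^{T}S=D^{-1}C^{T}CD^{-1}=I_m$, i.e., orthonormal columns as required by the Interlacing Lemma.

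Next I would translate the defining identity $(\widetilde{B})_{ij}=\frac{1}{|V_i|}(C^{T}AC)_{ij}$ into the matrix equality $C^{T}AC=D^{2}\widetilde{B}$, whence
$$S^{T}AS \;=\; D^{-1}C^{T}AC\,D^{-1} \;=\; D\widetilde{B}D^{-1}.$$
Thus $S^{T}AS$ is similar to $\widetilde{B}$. In particular, even though $\widetilde{B}$ need not itself be symmetric, it has the same (real) spectrum as the symmetric matrix $S^{T}AS$. Applying part $(i)$ of the Interlacing Lemma with $B:=S^{T}AS$ then yields part $(i)$ of the present lemma.

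For part $(ii)$, if the interlacing is tight then part $(ii)$ of the Interlacing Lemma gives $AS=S(S^{T}AS)=SD\widetilde{B}D^{-1}$. Right-multiplying by $D$ and using $SD=C$ produces $AC=C\widetilde{B}$. Reading this column by column, the $j$-th column reads $A\chi_{V_j}=\sum_{i=1}^{m}(\widetilde{B})_{ij}\,\chi_{V_i}$, where $\chi_{V_i}$ is the characteristic vector of $V_i$. For each $v\in V_i$ the left-hand side, restricted to entry $v$, is exactly the row sum of row $v$ in the block $A_{i,j}$; the right-hand side shows this value is the constant $(\widetilde{B})_{ij}$ independently of $v\in V_i$. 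Hence every block $A_{i,j}$ has constant row sum, which is precisely the definition of an equitable partition, completing $(ii)$. There is no real obstacle in this argument: the whole proof is a bookkeeping reduction to the already-stated Interlacing Lemma, the only subtle point being the non-symmetry of $\widetilde{B}$, which is handled by passing through its similar, symmetric conjugate $S^{T}AS$.
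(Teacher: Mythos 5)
Your proof is correct. The paper itself states this lemma without proof (it is quoted as a standard fact, essentially from Haemers' interlacing paper), and your argument is exactly the standard derivation: normalize the characteristic matrix to $S=CD^{-1}$, observe $S^{T}AS=D\widetilde{B}D^{-1}$ is a symmetric matrix similar to $\widetilde{B}$, apply part $(i)$ of the preceding Interlacing Lemma, and in the tight case convert $AS=S(S^{T}AS)$ into $AC=C\widetilde{B}$, which is precisely the paper's definition of an equitable partition. Nothing is missing.
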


\begin{lema}\cite[Theorem $9.3.3$]{Ch&G}\label{quotienteigenvalues1}
If $\pi$ is an equitable partition of a graph $\Gamma$, then the characteristic polynomial of $\widetilde{B}$ divides the characteristic polynomial of $A$.
\end{lema}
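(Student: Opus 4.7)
The plan is to exploit the equitable partition condition to exhibit an $A$-invariant subspace of dimension $m$ and then read off the divisibility from the resulting block decomposition of $A$. Concretely, I would work with the characteristic matrix $C$ defined in the text: since its columns are indicator vectors of the disjoint, nonempty cells $V_1,\ldots,V_m$, they are pairwise orthogonal, and $C^{T}C = D$, the diagonal matrix with entries $|V_1|,\ldots,|V_m|$. The equitable condition, as recorded immediately before the lemma, is equivalent to $AC = C\widetilde{B}$, which precisely says that $\mathcal{U} := \mathrm{col}(C)$ is an $A$-invariant subspace.

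To extract the quotient's characteristic polynomial, I would orthonormalize by setting $S = CD^{-1/2}$, so that $S^{T}S = I_m$ and
$$S^{T} A S = D^{-1/2} C^{T} A C D^{-1/2} = D^{-1/2}(C^{T}C)\widetilde{B}\, D^{-1/2} = D^{1/2}\widetilde{B}\, D^{-1/2}.$$
Thus $S^{T}AS$ is similar to $\widetilde{B}$ and in particular shares its characteristic polynomial. Extending $S$ to an orthogonal matrix $Q = [\,S\mid T\,]$, where the columns of $T$ form an orthonormal basis of $\mathcal{U}^{\perp}$, the matrix $Q^{T}AQ$ acquires a block form. Because $A$ is symmetric and $\mathcal{U}$ is $A$-invariant, $\mathcal{U}^{\perp}$ is $A$-invariant as well, so $S^{T}AT = 0$ and $Q^{T}AQ$ is block diagonal with diagonal blocks $S^{T}AS$ and $T^{T}AT$. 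Hence the characteristic polynomial of $A$ factors as the characteristic polynomial of $\widetilde{B}$ times $\det(xI_{n-m} - T^{T}AT)$, yielding the desired divisibility.

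There is no substantive obstacle: the argument is a standard linear-algebra verification once the characteristic matrix viewpoint is adopted. The only points requiring care are (a) passing from $C$ to its orthonormalized version $S$, which is what upgrades the interlacing statement of Lemma~\ref{quotienteigenvalues2} to an actual similarity, and (b) invoking symmetry of $A$ to promote the $A$-invariance of $\mathcal{U}$ to that of $\mathcal{U}^{\perp}$. Both are routine, so the principal effort lies in laying out the bookkeeping cleanly.
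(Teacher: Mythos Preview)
Your argument is correct: the intertwining relation $AC=C\widetilde{B}$ makes $\mathrm{col}(C)$ an $A$-invariant subspace, the normalization $S=CD^{-1/2}$ gives $S^{T}AS=D^{1/2}\widetilde{B}D^{-1/2}\sim\widetilde{B}$, and symmetry of $A$ forces the block-diagonal decomposition that yields the divisibility. Note, however, that the paper does not supply its own proof of this lemma at all---it simply cites \cite[Theorem~9.3.3]{Ch&G}---so there is nothing to compare against beyond observing that your proof is essentially the standard one given in that reference.
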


\subsection{Hoffman graphs}
We will need the following properties and definitions related to Hoffman graphs.

\begin{defi}
A \emph{Hoffman graph} $\mathfrak{h}$ is a pair $(H,\mu)$ of a graph $H=(V,E)$ and a labeling
map $\mu : V\rightarrow \{f,s\}$, satisfying the following conditions:
\begin{itemize}
\item[$(i)$] every vertex with label $f$ is adjacent to at least one vertex with label $s$;
\item[$(ii)$] vertices with label $f$ are pairwise non-adjacent.
\end{itemize}
We call a vertex with label $s$ a \emph{slim vertex}, and a vertex with label $f$ a \emph{fat vertex}. We denote
by $V_s=V_s(\mathfrak{h})$ (resp. $V_f(\mathfrak{h})$) the set of slim (resp. fat) vertices of $\mathfrak{h}$.

\vspace{0.1cm}
For a vertex $x$ of $\mathfrak{h}$, we define $N_{\mathfrak{h}}^{s}(x)$ (resp. $N_{\mathfrak{h}}^{f}(x)$) the set of slim (resp. fat) neighbors of $x$ in $\mathfrak{h}$.
If every slim vertex of a Hoffman graph $\mathfrak{h}$ has a fat neighbor, then we call $\mathfrak{h}$ \emph{fat}. And if every slim vertex has at least $t$ fat neighbors, we call $\mathfrak{h}$ $t$-\emph{fat}. In a similar fashion, we define $N^{f}(x_1,x_2)=N^{f}_\mathfrak{h}(x_1,x_2)$ to be the set of common fat neighbors of two slim vertices $x_1$ and $x_2$ in $\mathfrak{h}$ and $N^{s}(F_1,F_2)=N^{s}_\mathfrak{h}(F_1,F_2)$ to be the set of common slim neighbors of two fat vertices $F_1$ and $F_2$ in $\mathfrak{h}$.
\end{defi}

\vspace{0.1cm}
The \emph{slim graph} of a Hoffman graph $\mathfrak{h}$ is the subgraph of $H$ induced by $V_s(\mathfrak{h})$.

\vspace{0.1cm}
A Hoffman graph $\mathfrak{h_1 }= (H_1, \mu_1)$ is called an \emph{induced Hoffman subgraph} of $\mathfrak{h}=(H, \mu)$, if $H_1$ is an induced subgraph of $H$ and $\mu_1(x) = \mu(x)$ for all vertices $x$ of $H_1$.

\vspace{0.1cm}
Let $W$ be a subset of $V_s(\mathfrak{h})$. An \emph{induced Hoffman subgraph of $\mathfrak{h}$ generated by $W$}, denoted by $\langle W\rangle_{\mathfrak{h}}$, is the Hoffman subgraph of $\mathfrak{h}$ induced by $W \bigcup\{f\in V_f(\mathfrak{h})~|f \sim w \text{ for some }w\in W \}$.

\vspace{0.1cm}
A \emph{quasi-clique} is a subgraph of the slim graph of $\mathfrak{h}$ induced by the neighborhood of a fat vertex of $\mathfrak{h}$. If a quasi-clique is induced by the neighborhood of fat vertex $F$, we say it is the quasi-clique corresponding to $F$ and denote it by $Q_{\mathfrak{h}}(F)$.


\begin{defi} For a Hoffman graph $\mathfrak{h}=(H,\mu)$, let $A$ be the adjacency matrix of $H$
\begin{eqnarray*}
A=\left(
\begin{array}{cc}
A_s  & C\\
C^{T}  & O
\end{array}
\right)
\end{eqnarray*}

in a labeling in which the fat vertices come last. The \emph{special matrix} $\mathcal{S}(\mathfrak{h})$ of $\mathfrak{h}$ is the real symmetric matrix $\mathcal{S}(\mathfrak{h}):=A_s-CC^{T}.$ The eigenvalues of $\mathfrak{h}$ are the eigenvalues of $\mathcal{S}(\mathfrak{h})$.
\end{defi}

Note that $\mathfrak{h}$ is not determined by $\mathcal{S}$, since different $\mathfrak{h}$ may have the same special matrix $\mathcal{S}$.
Observe also that if there are not fat vertices, then $\mathcal{S}(\mathfrak{h})=A_s$ is just the standard adjacency matrix.

Now we introduce two key concepts in this work: the direct sum of Hoffman graphs and line Hoffman graphs.

\begin{defi}\textup{(Direct sum of Hoffman graphs)}\label{directsum}
Let $\mathfrak{h}$ be a Hoffman graph and $\mathfrak{h}^1$ and $\mathfrak{h}^2$ be two induced Hoffman subgraphs of $\mathfrak{h}$. The Hoffman graph $\mathfrak{h}$ is the \emph{direct sum} of $\mathfrak{h}^1$ and $\mathfrak{h}^2$, that is $\mathfrak{h}=\mathfrak{h}^1\bigoplus\mathfrak{h}^2$, if and only if $\mathfrak{h}^1, \mathfrak{h}^2$ and $\mathfrak{h}$ satisfy the following conditions:

\begin{itemize}
\item[$(i)$]$V(\mathfrak{h})=V(\mathfrak{h}^1)\bigcup V(\mathfrak{h}^2);$
\item[$(ii)$]$\big\{V_s(\mathfrak{h}^1),V_s(\mathfrak{h}^2)\big\}$ is a partition of $V_s(\mathfrak{h})$;
\item[$(iii)$]if $x\in V_s(\mathfrak{h}^i)$, $f\in V_f(\mathfrak{h})$ and $x\sim f$, then $f\in V_f(\mathfrak{h}^i)$;
\item[$(iv)$]if $x\in V_s(\mathfrak{h}^1)$ and $y\in V_s(\mathfrak{h}^2)$, then $x$ and $y$ have at most one common fat neighbor, and they have exactly one common fat neighbor if and only if they are adjacent.
\end{itemize}
\end{defi}

Let us show an example of how to construct a direct sum of two Hoffman graphs.
\begin{example}\label{example}
Let $\mathfrak{h}_1,\mathfrak{h}_2$ and $\mathfrak{h}_3$ be the Hoffman graphs represented in Figure \ref{1example}.
\begin{figure}[H]
\ffigbox{
\begin{subfloatrow}
\ffigbox[\FBwidth]{\caption*{$\mathfrak{h}_1$}}{\includegraphics[scale=0.8]{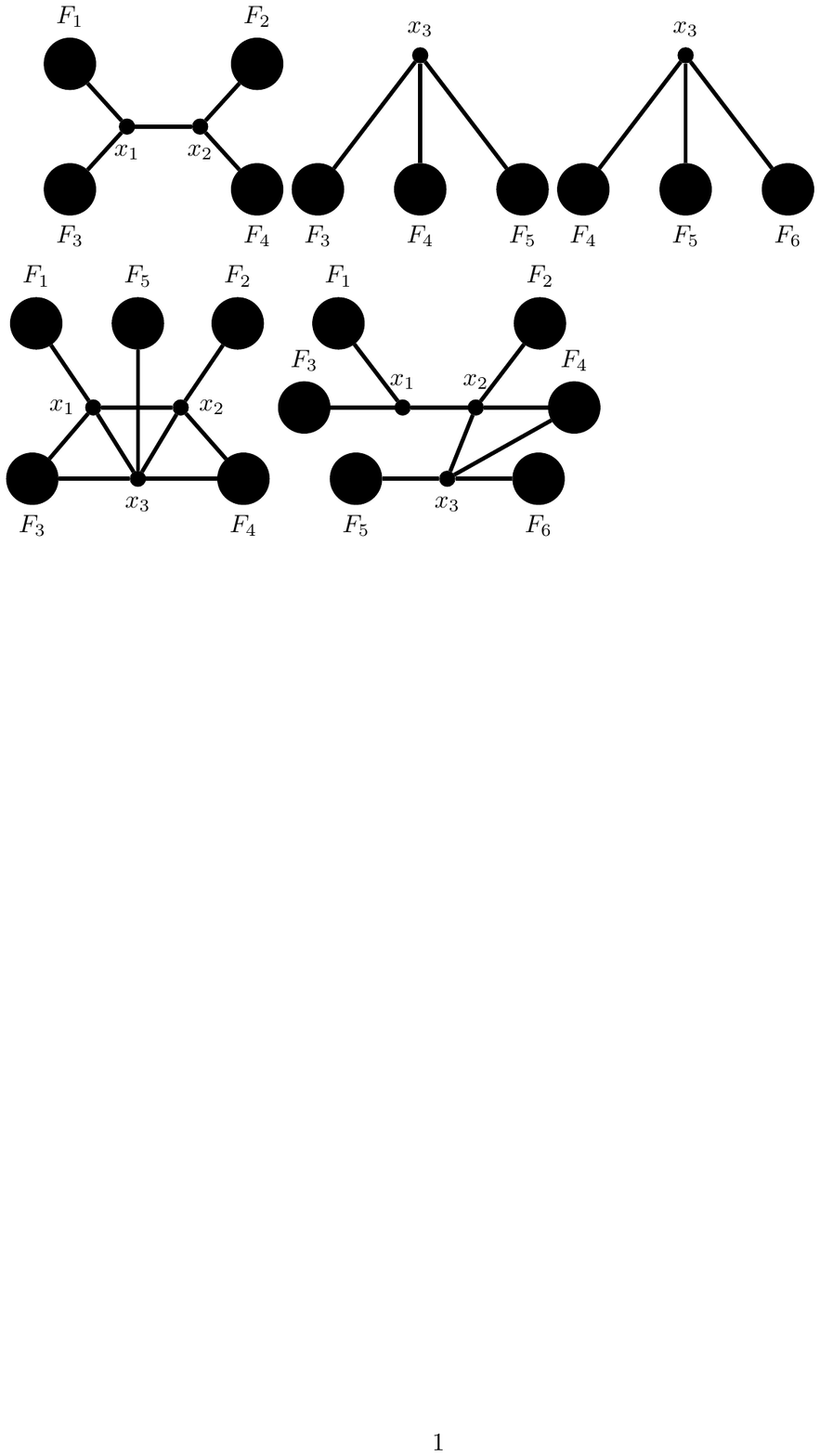}}
\ffigbox[\FBwidth]{\caption*{$\mathfrak{h}_2$}}{\includegraphics[scale=0.8]{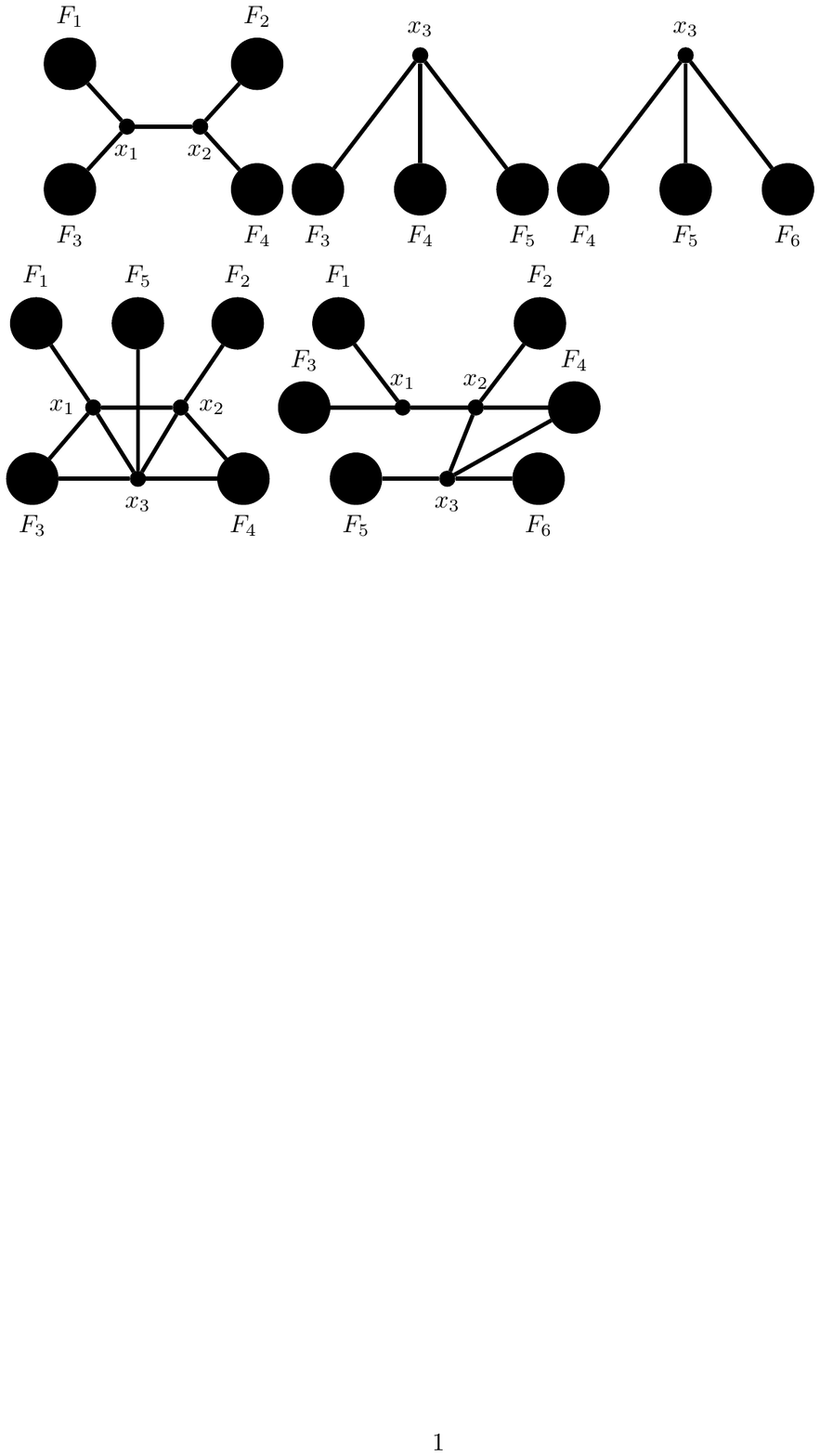}}
\ffigbox[1.2\FBwidth]{\caption*{$\mathfrak{h}_3$}}{\includegraphics[scale=0.8]{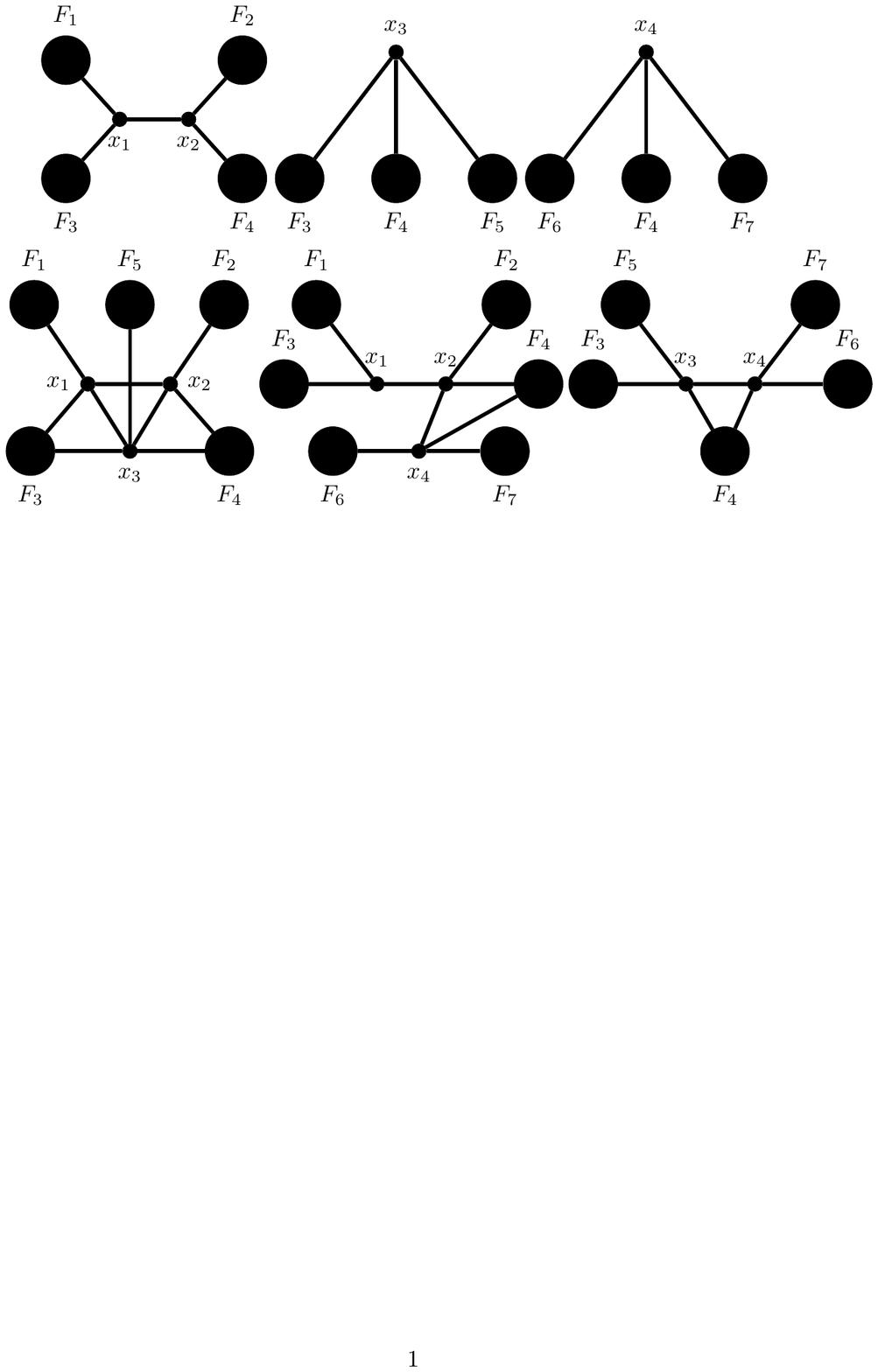}}
\end{subfloatrow}
}
{\caption{}\label{1example}}
\end{figure}
Then $\mathfrak{h}_1\bigoplus\mathfrak{h}_2,$ $\mathfrak{h}_1\bigoplus\mathfrak{h}_3$ and $\mathfrak{h}_2\bigoplus\mathfrak{h}_3$ are shown in Figure \ref{2example}.
\begin{figure}[H]
\ffigbox{
\begin{subfloatrow}
\ffigbox[1.0\FBwidth]{\caption*{$\mathfrak{h}_1\bigoplus\mathfrak{h}_2$}}{\includegraphics[scale=0.8]{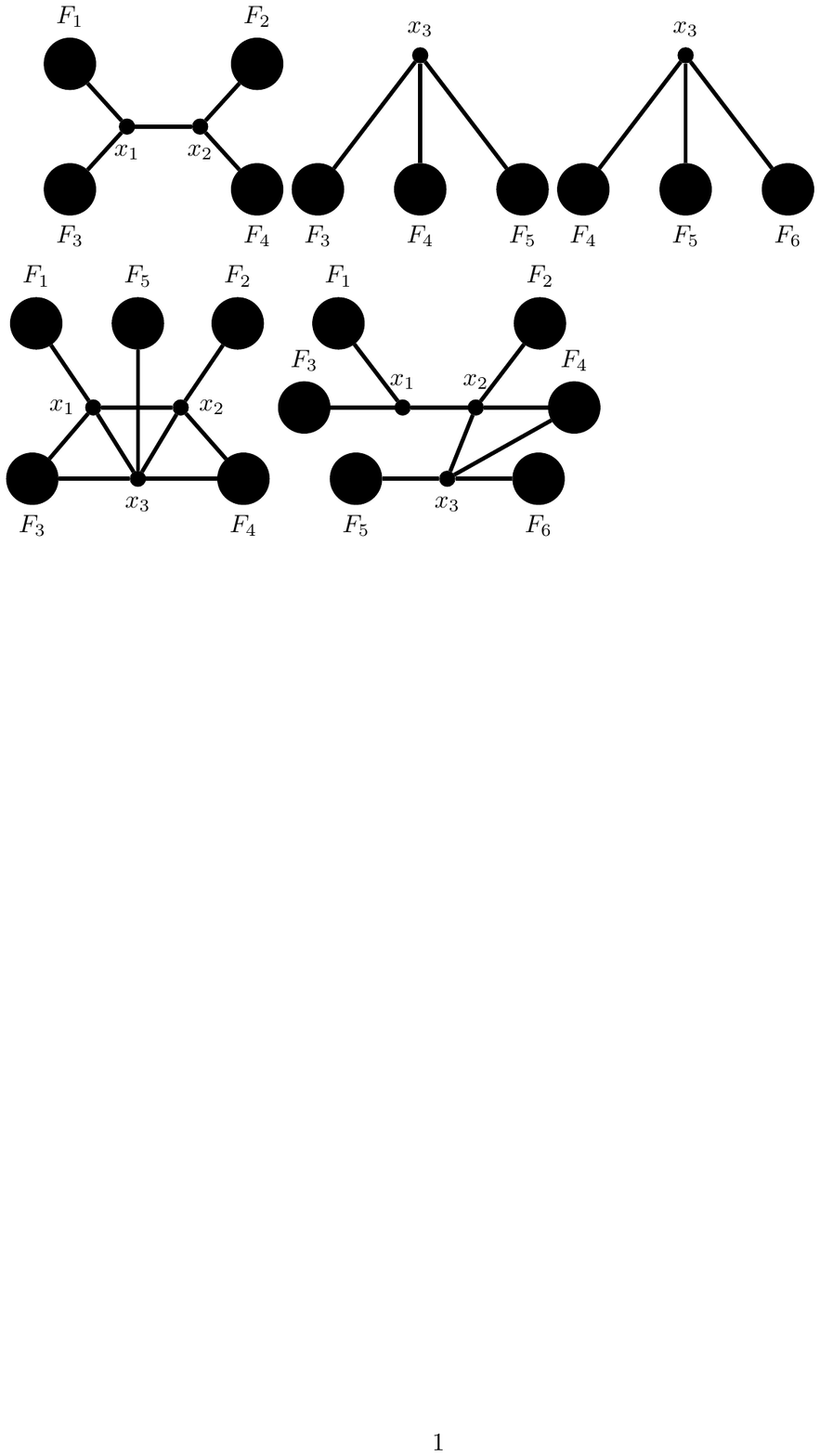}}
\ffigbox[1.0\FBwidth]{\caption*{$\mathfrak{h}_1\bigoplus\mathfrak{h}_3$}}{\includegraphics[scale=0.8]{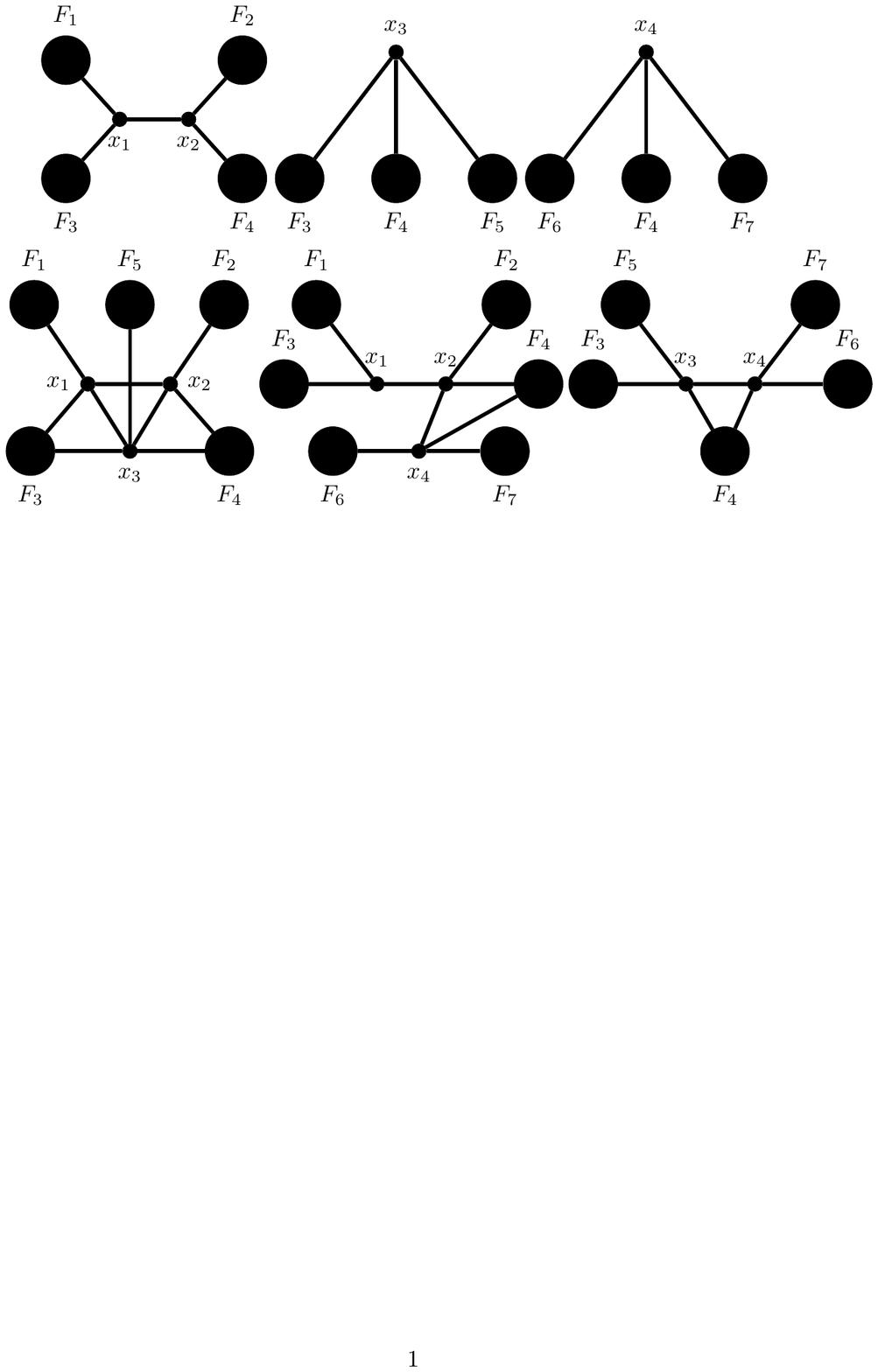}}
\ffigbox[1.2\FBwidth]{\caption*{$\mathfrak{h}_2\bigoplus\mathfrak{h}_3$}}{\includegraphics[scale=0.8]{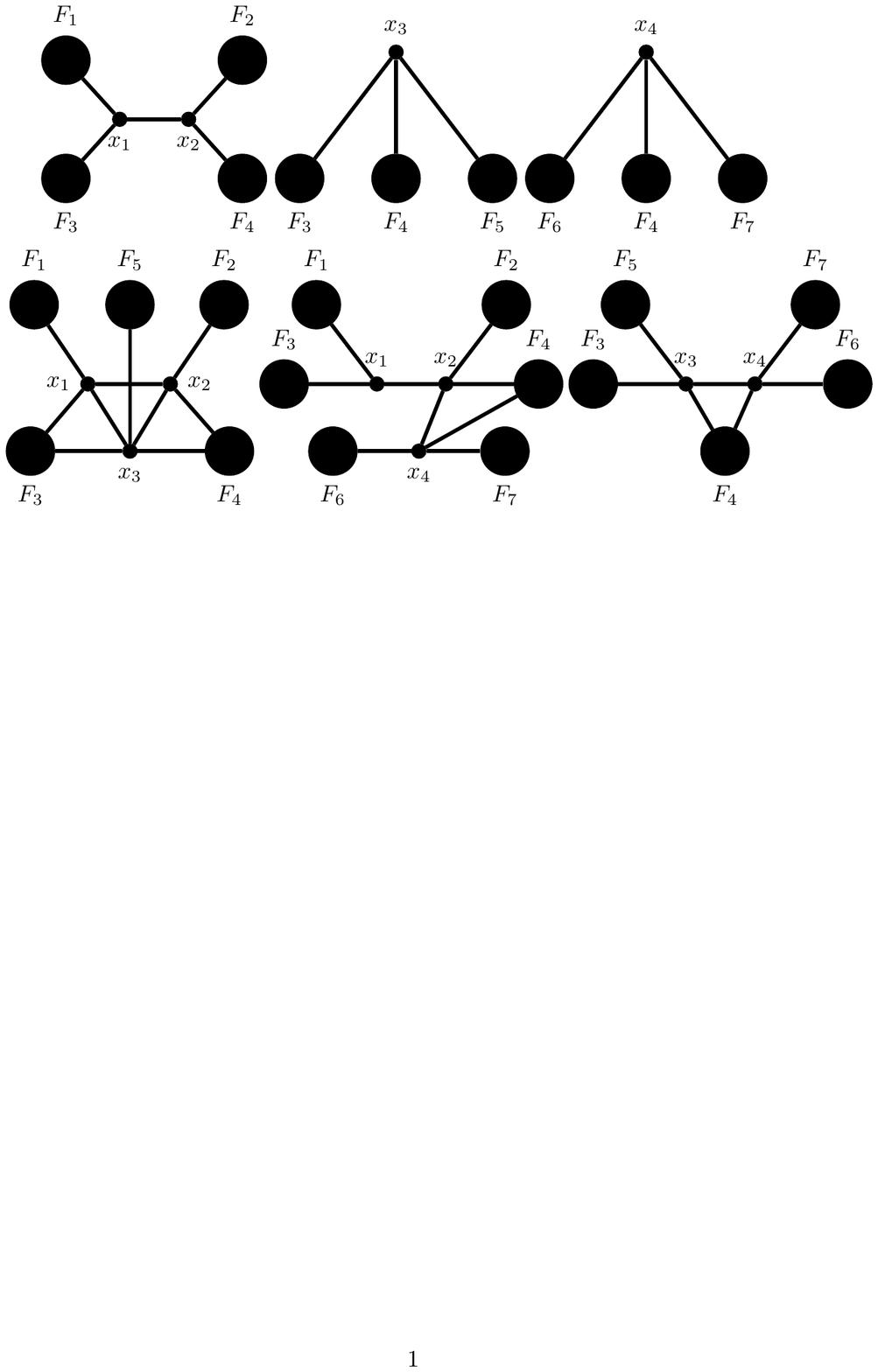}}
\end{subfloatrow}
}
{\caption{}\label{2example}}
\end{figure}
\end{example}
\begin{defi}
If a Hoffman graph $\mathfrak{h}$ is the direct sum of Hoffman graphs $\mathfrak{h}_1$ and $\mathfrak{h}^\prime$, then we call the Hoffman graph $\mathfrak{h}_1$ a \emph{factor} of $\mathfrak{h}$. If $\mathfrak{h}_1$ is indecomposable, then it is called indecomposable.
\end{defi}
\begin{defi}
Let $\mathfrak{G}$ be a family of Hoffman graphs. A Hoffman graph $\mathfrak{h}$ is called a $\mathfrak{G}$-\emph{line Hoffman graph} if $\mathfrak{h}$ is an induced Hoffman subgraph of Hoffman graph $\mathfrak{h}^\prime=\oplus_{i=1}^{r}\mathfrak{h}_i^\prime$ where $\mathfrak{h}_i^\prime$ is isomorphic to an induced Hoffman subgraph of some Hoffman graph in $\mathfrak{G}$ for $i=1,\dots,r$, such that $\mathfrak{h}^\prime$ has the same slim graph as $\mathfrak{h}$.
\end{defi}
%
%

\section{Cospectral graph with the $2$-clique extension of the $(t+1)\times(t+1)$-grid}\label{sec:kyy}
In this section, we study some consequences of Theorem \ref{kyy}. As mentioned in Section \ref{intro}, the main goal of this paper is to show Theorem \ref{maintheorem1intro}. Therefore, from now on we shall prepare the proof for Theorem \ref{maintheorem1intro}. \\
Let $t>0$ and for the rest of this paper, let $G$ be a graph cospectral with the $2$-clique extension of the $(t+1) \times (t+1)$-grid with adjacency matrix $A$. Since $G$ has the same spectrum as the $2$-clique extension of the $(t+1)\times (t+1)$-grid, $G$ is a regular graph with valency $k=4t+1$ and spectrum $$\big\{\eta_0^{m_0},\eta_1^{m_1},\eta_2^{m_2},\eta_3^{m_3}\big\}=\big\{(4t+1)^{1},~(2t-1)^{2t},~(-1)^{(t+1)^{2}},~(-3)^{t^{2}}\big\}.$$
Using (\ref{HoffmanPolynomial}) we obtain
\begin{equation*}
A^3+(5-2t)A^2+(7-8t)A+(3-6t)I=(16t+8)J.
\end{equation*}
Thus, we have
\begin{equation}\label{ways}
A^3_{(x,y)}= \left\{
\begin{array}{l l}
8t^2+4t, & \text{if } x=y;\\
24t+1-(5-2t)\lambda_{x,y}, & \text{if } x\sim y;\\
16t+8-(5-2t)\mu_{x,y}, & \text{if } x\not\sim y.
\end{array}
\right.
\end{equation}
If $G$ is the slim graph of a $2$-fat $\big\{$\raisebox{-0.5ex}{\includegraphics[scale=0.10]{photo1},\includegraphics[scale=0.10]{photo2},\includegraphics[scale=0.10]{photo3}}$\big\}$-line Hoffman graph, then there exists a $2$-fat Hoffman graph $\mathfrak{h}$, such that $\mathfrak{h}=\bigoplus_{i=1}^{s}\mathfrak{h}_i$ with slim graph $G$, and $\mathfrak{h}_i$ is isomorphic to one of the Hoffman graphs in the set $\mathfrak{G}=\big \{$\raisebox{-0.5ex}{\includegraphics[scale=0.10]{photo1},\includegraphics[scale=0.10]{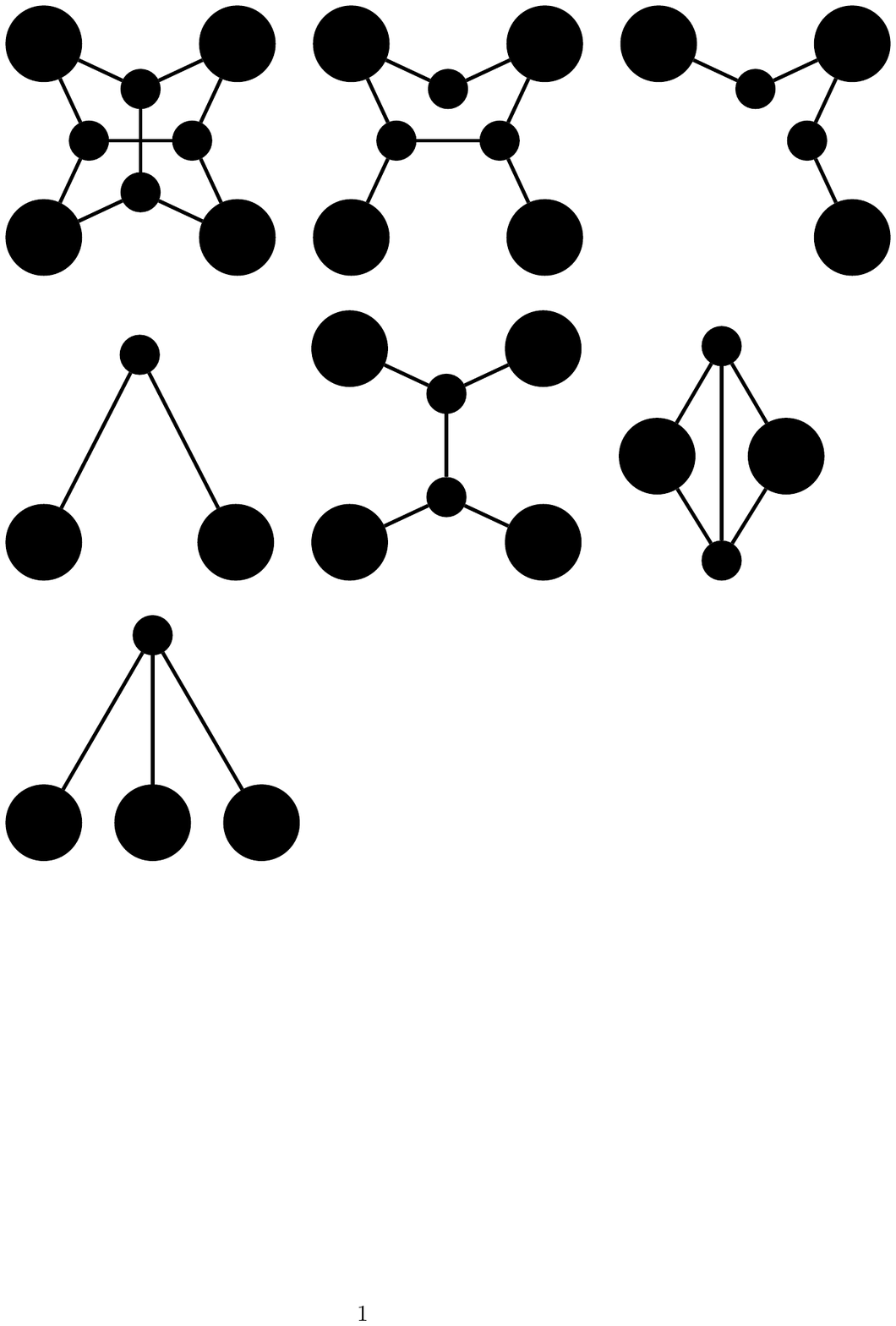},\includegraphics[scale=0.10]{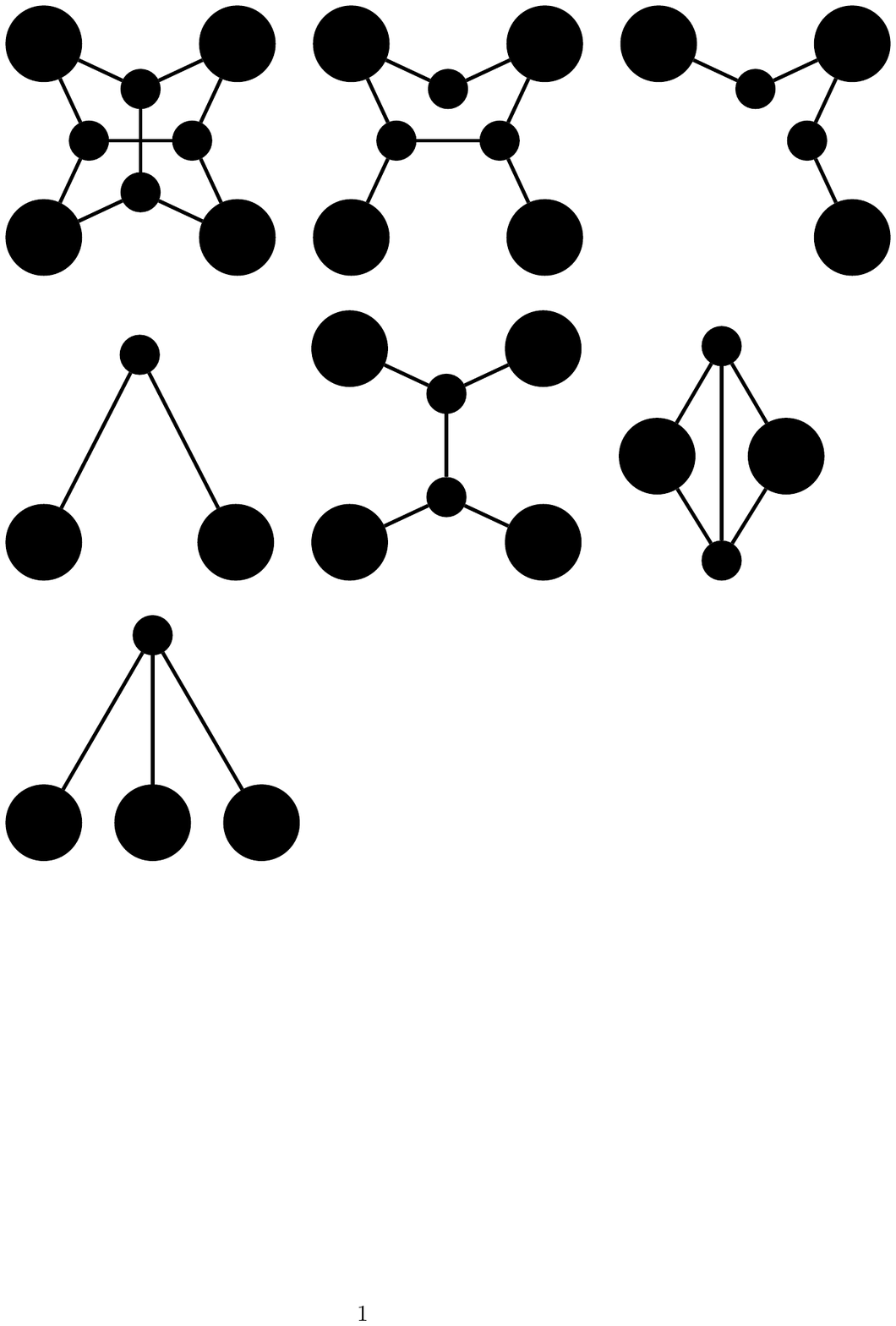},\includegraphics[scale=0.10]{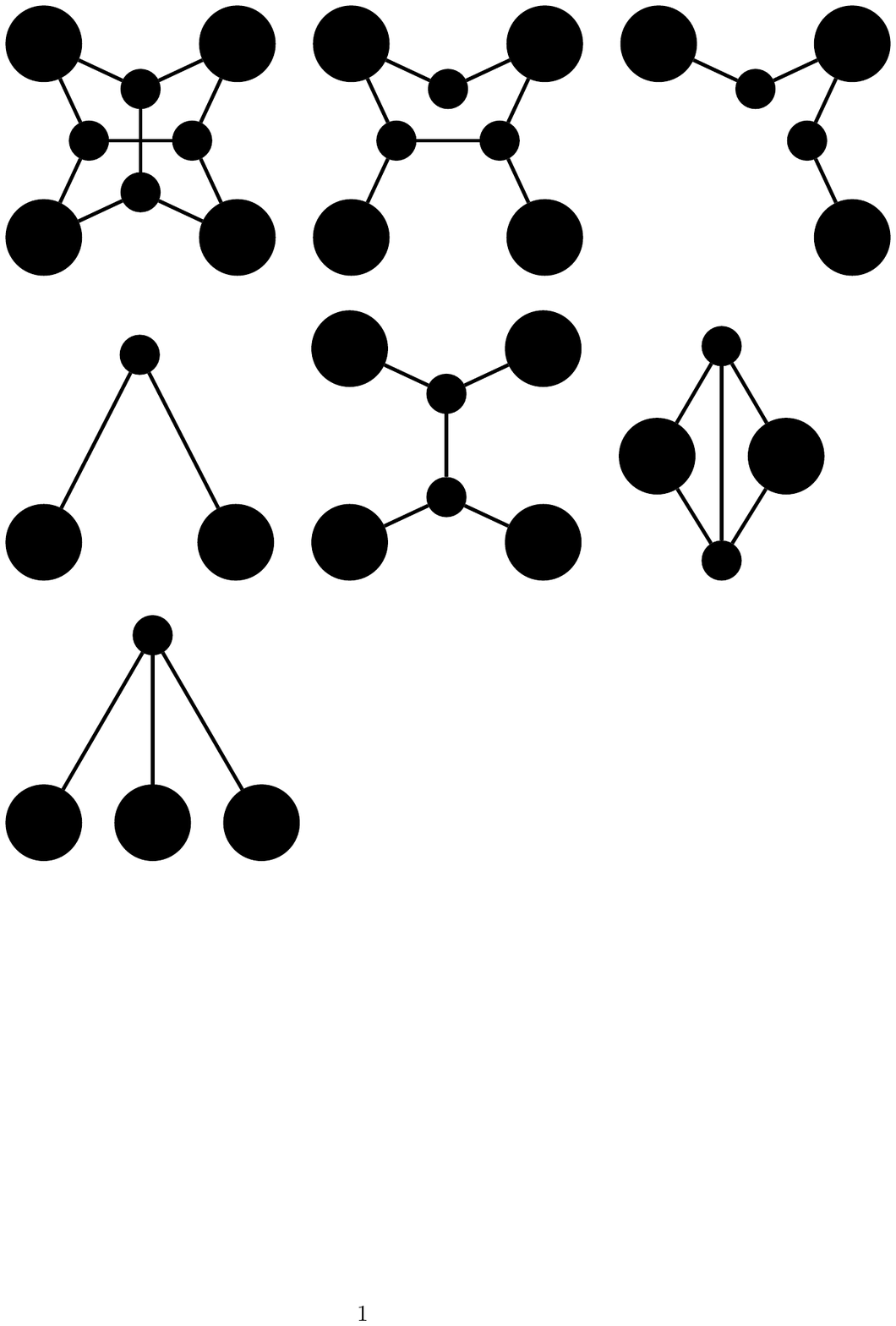},
\includegraphics[scale=0.10]{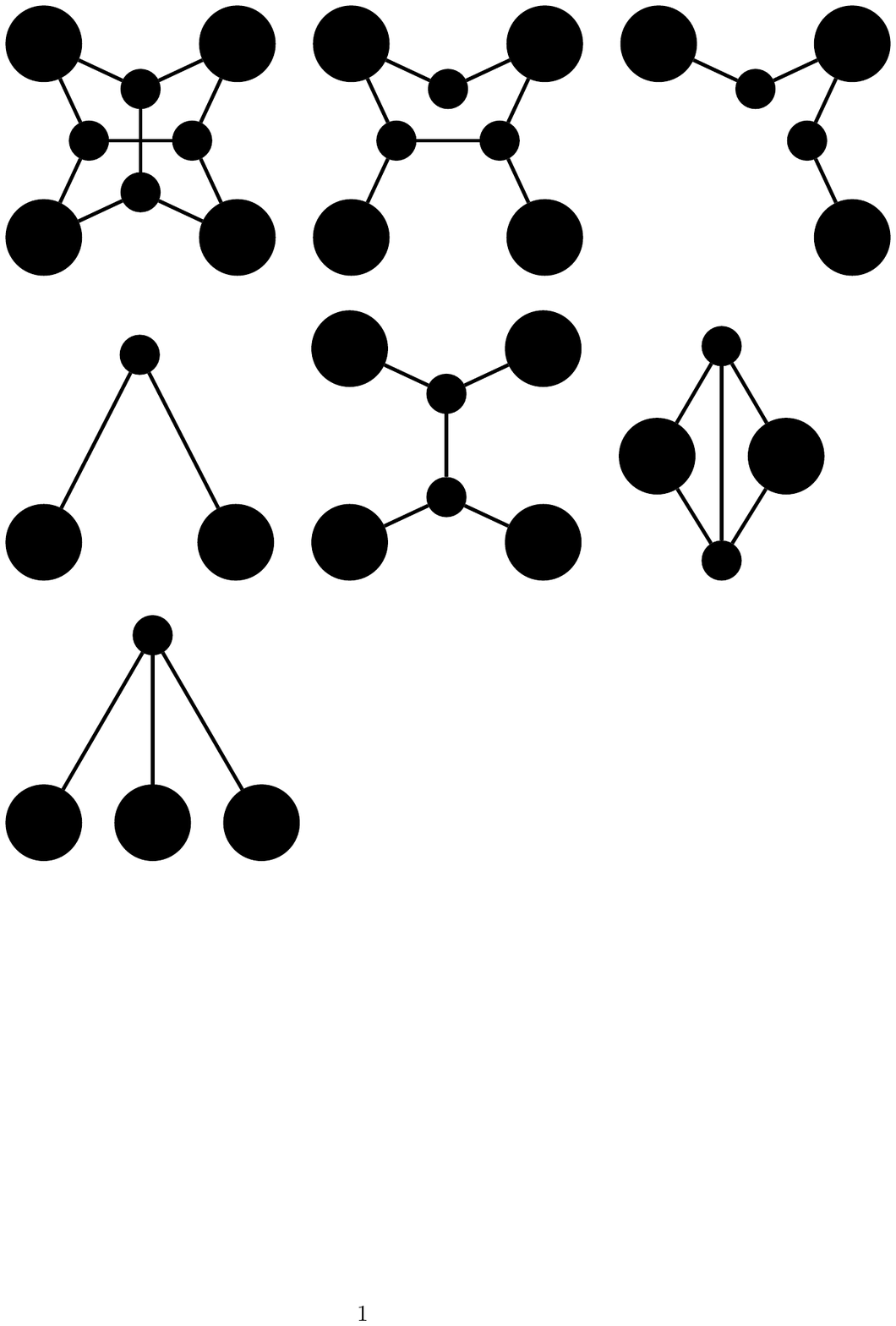},\includegraphics[scale=0.10]{photo2},\includegraphics[scale=0.10]{photo3}}$\big\}$ for $i=1,\ldots,s$.

We will now exclude two Hoffman graphs from the set $\mathfrak{G}$. To do so, we note the following remark:

\begin{remark}\label{remarkposition}
\item[$(i)$] The Hoffman graph \includegraphics[scale=0.10]{photo6} has the same slim graph as \includegraphics[scale=0.10]{photo3}.
\item[$(ii)$] The Hoffman graph \includegraphics[scale=0.10]{photo7} has the same slim graph as \includegraphics[scale=0.10]{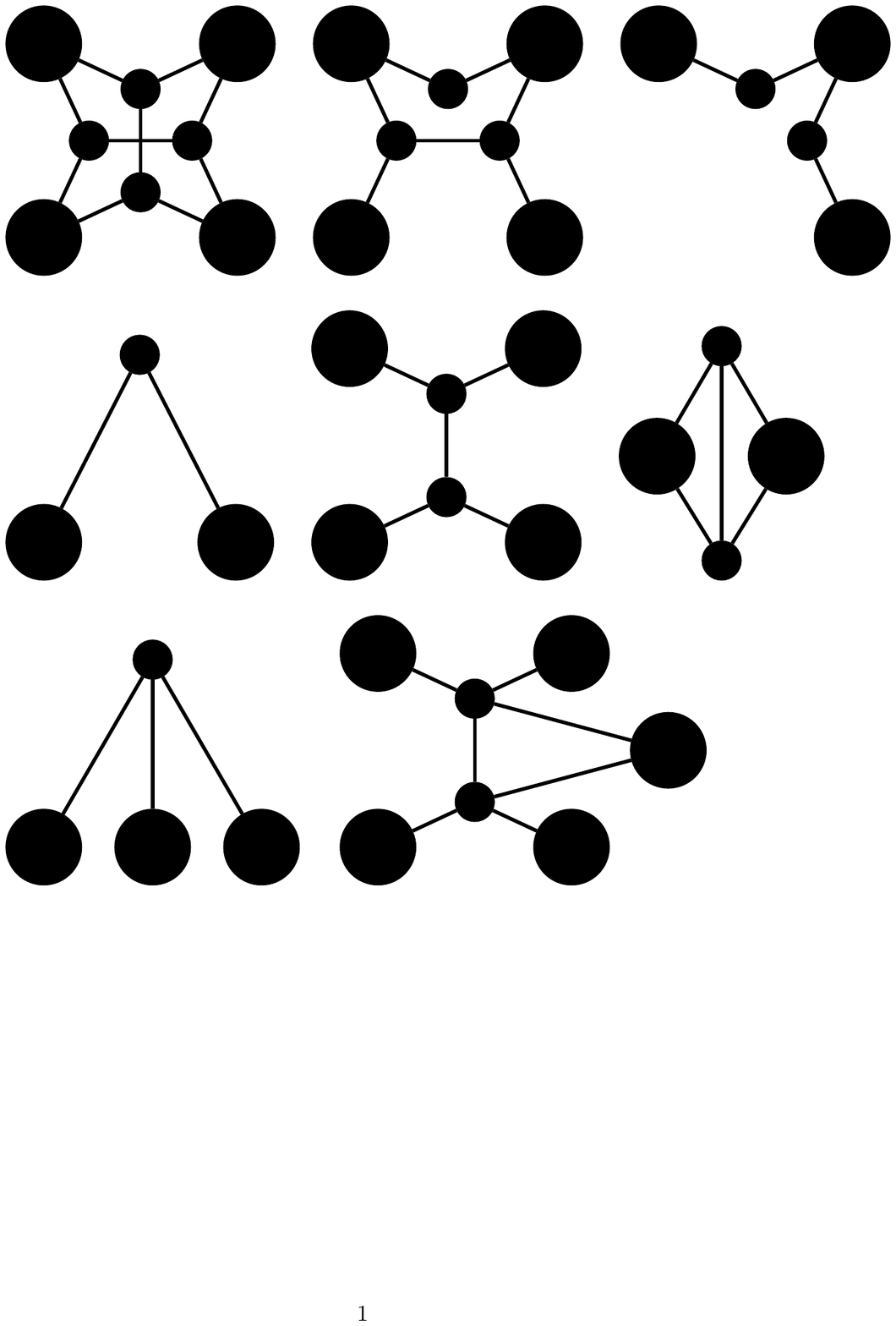}, which is the direct sum of two Hoffman graphs isomorphic to \includegraphics[scale=0.10]{photo3} with one common fat neighbor (see Example \ref{example}).
\end{remark}
Remark \ref{remarkposition} implies that we may assume that the $2$-fat Hoffman graph $\mathfrak{h}$, introduced before Remark \ref{remarkposition}, satisfies the following property, by adding fat vertices, if necessary.
\begin{property}\label{Hoffmanlinegraph}
\item[$(i)$] $\mathfrak{h}$ has $G$ as slim graph;
\item[$(ii)$] $\mathfrak{h}=\bigoplus_{i=1}^{s^\prime}\mathfrak{h}_i^\prime$, where $\mathfrak{h}_i^\prime$ isomorphic to one of the Hoffman graphs shown in Figure \ref{fg'}, for $i=1,\ldots,s^\prime$.
\end{property}
\begin{figure}[H]
\ffigbox{
\begin{subfloatrow}
\ffigbox[0.8\FBwidth]{\caption*{$\mathfrak{g}_1$}}{\includegraphics[scale=0.8]{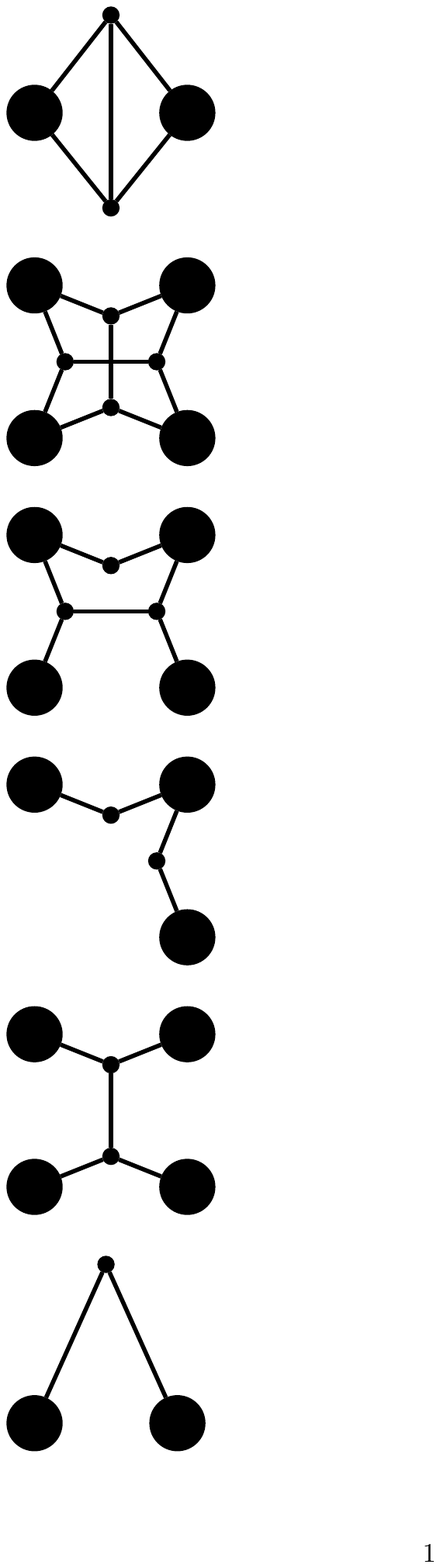}}
\ffigbox[1.2\FBwidth]{\caption*{$\mathfrak{g}_2$}}{\includegraphics[scale=0.8]{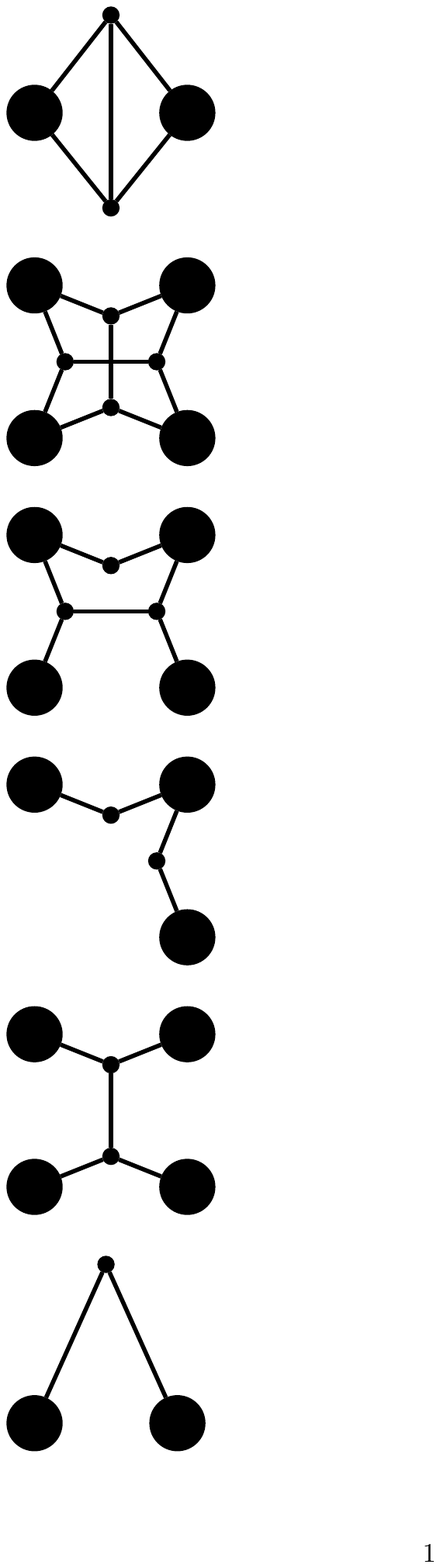}}
\ffigbox[0.8\FBwidth]{\caption*{$\mathfrak{g}_3$}}{\includegraphics[scale=0.8]{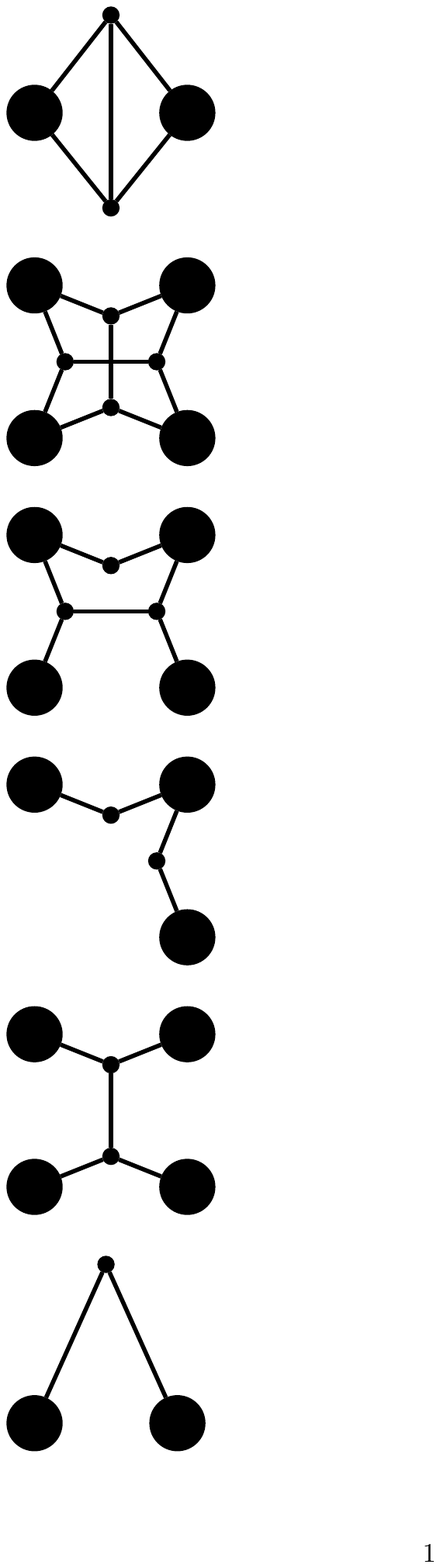}}
\ffigbox[0.8\FBwidth]{\caption*{$\mathfrak{g}_4$}}{\includegraphics[scale=0.8]{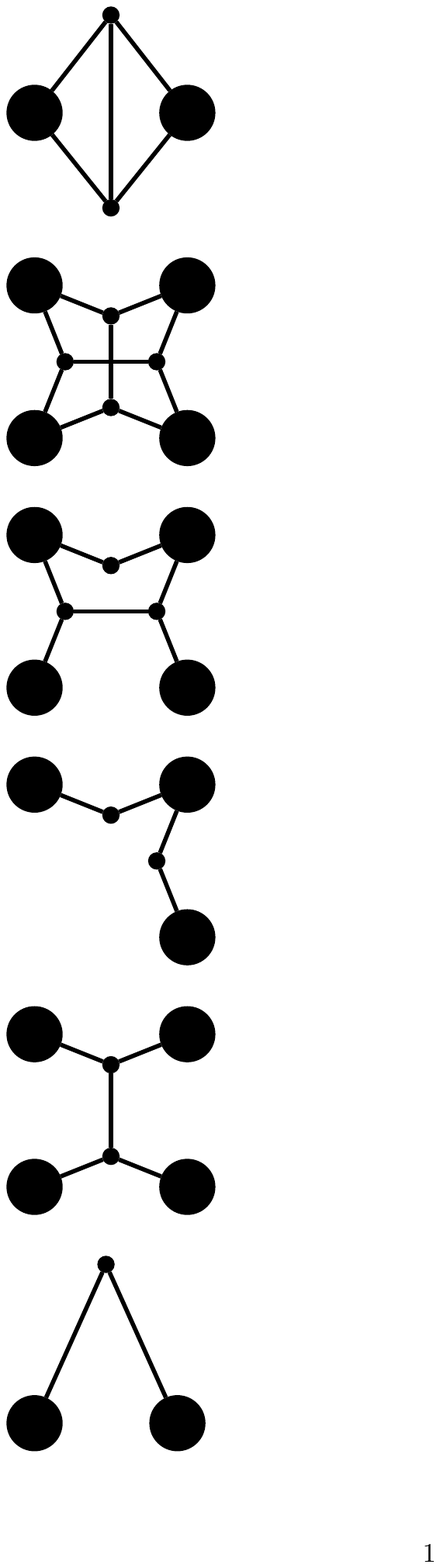}}
\ffigbox[0.8\FBwidth]{\caption*{$\mathfrak{g}_5$}}{\includegraphics[scale=0.8]{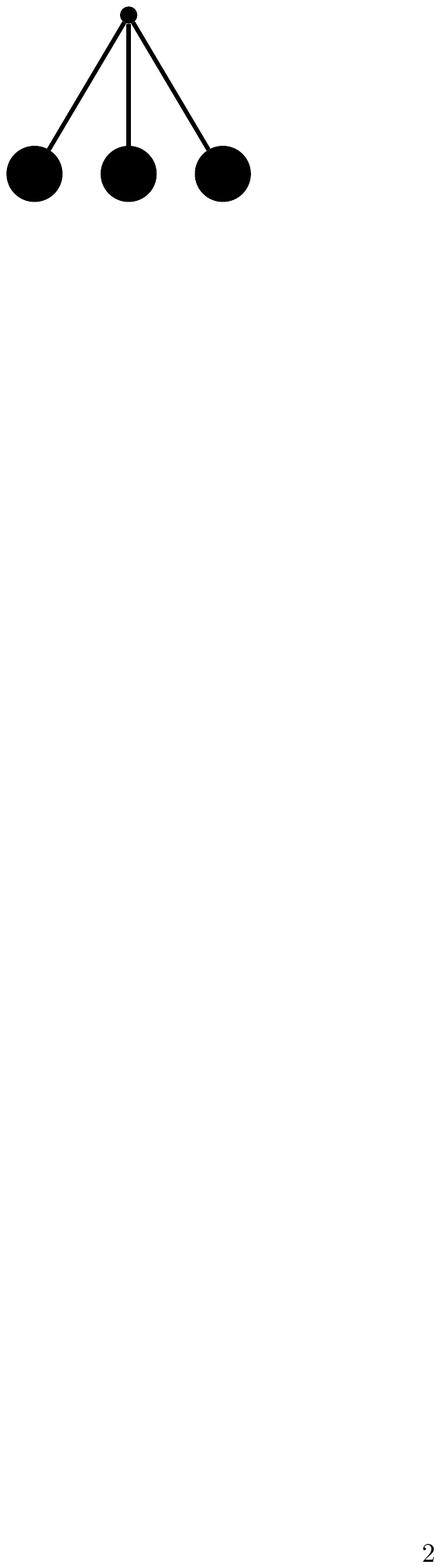}}
\end{subfloatrow}
}
{\caption{}\label{fg'}}
\end{figure}
Using Property \ref{Hoffmanlinegraph} and the definition of direct sum, we obtain the following lemma:
\begin{lema}\label{commonneighborsg'}
\item [($i$)] Any two distinct fat vertices $F_1$ and $F_2$ of $\mathfrak{h}$ have at most two common slim neighbors, i.e., $|N^s_{\mathfrak{h}}(F_1,F_2)|\leq 2$, and if $F_1$ and $F_2$ have exactly two common slim neighbors $x_1$ and $x_2$, then $x_1$ and $x_2$ are adjacent. In particular, this means that in this case, \raisebox{-3.5ex}{\includegraphics[scale=0.8]{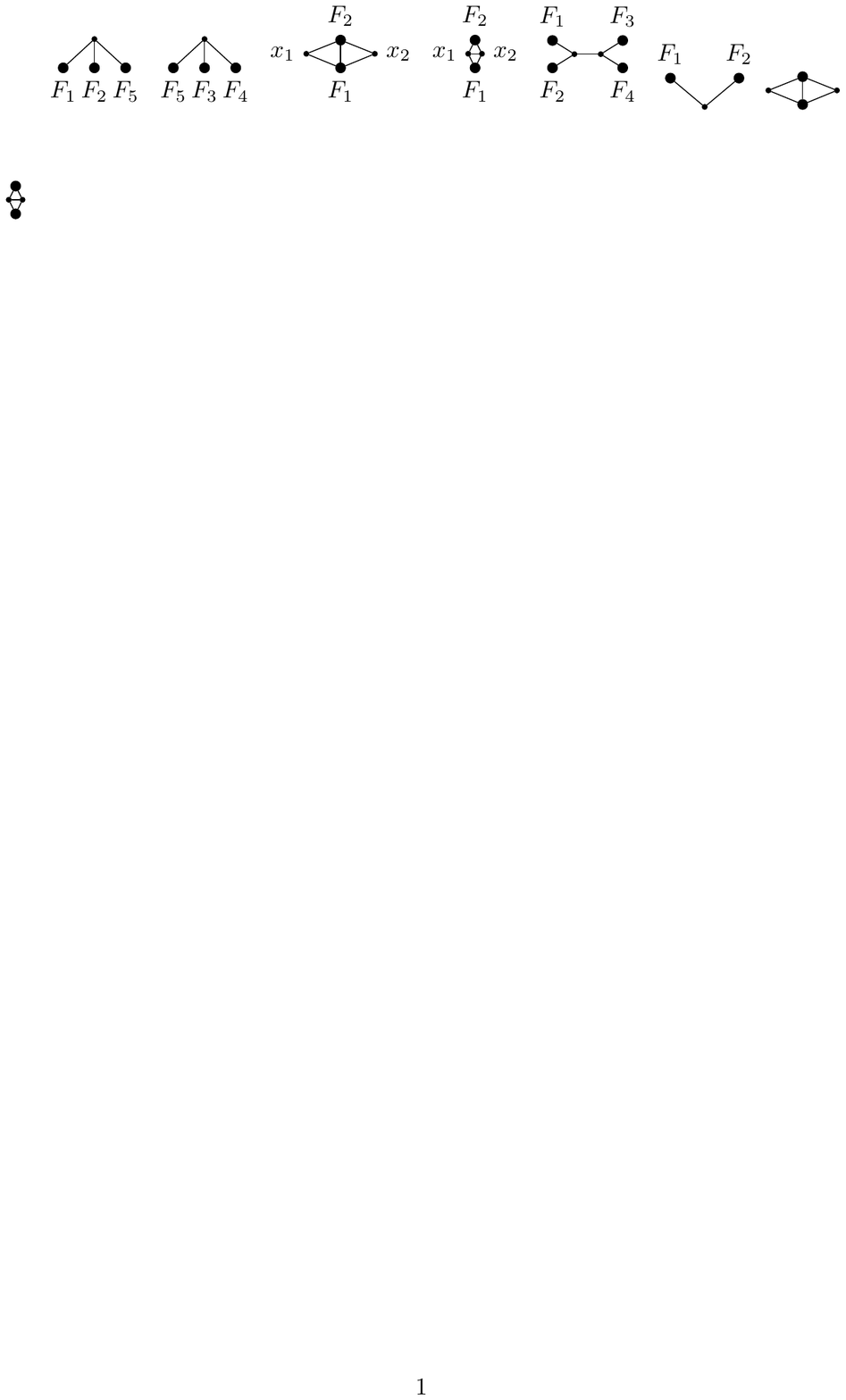}} is an indecomposable factor of $\mathfrak{h}$.
\item [($ii$)] If \raisebox{-2.5ex}{\includegraphics[scale=0.8]{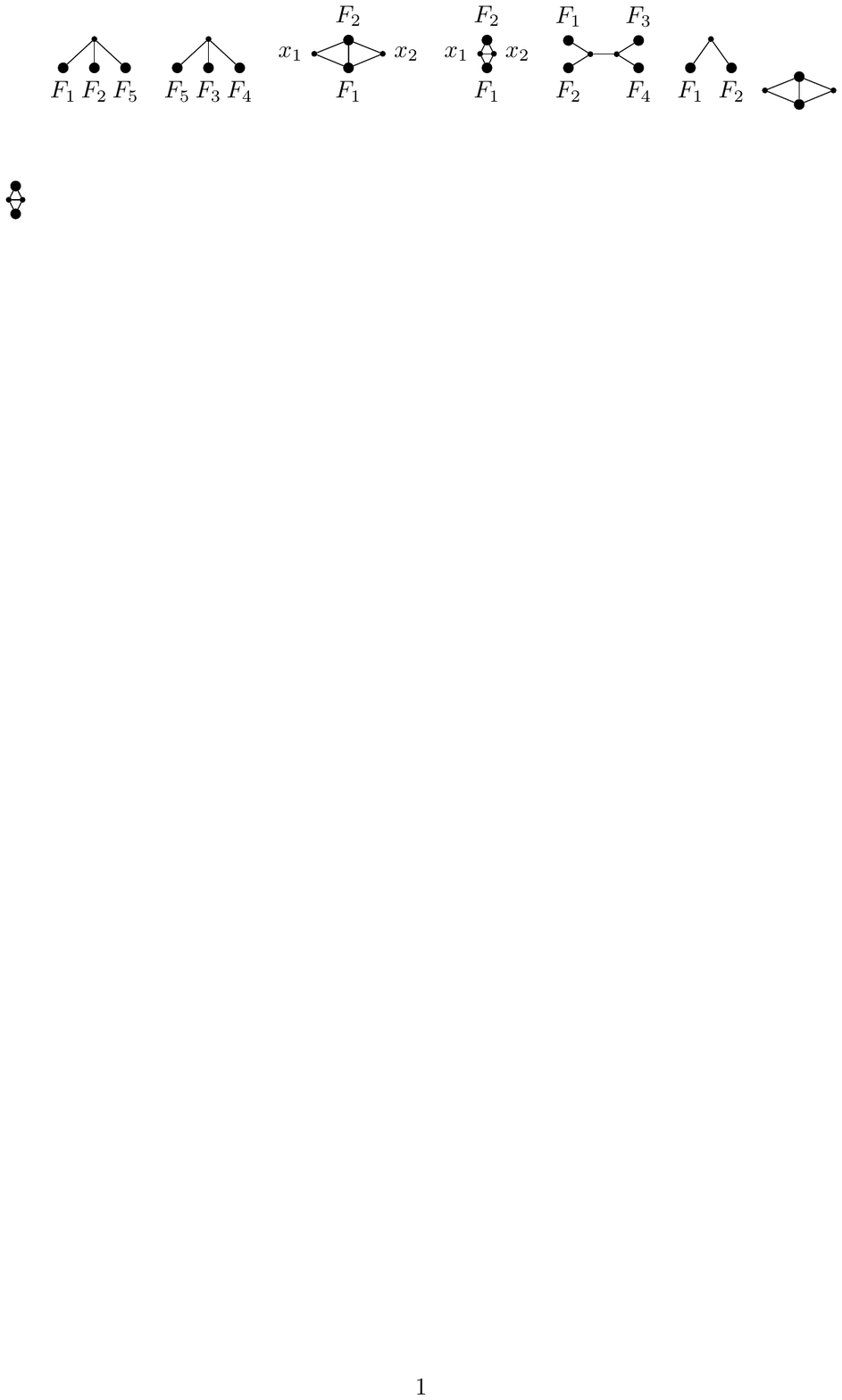}} is an induced Hoffman subgraph of one of the $\mathfrak{h_i}$ of Figure \ref{fg'}, and $\mathfrak{h_i}\not\backsimeq$ \raisebox{-1.2ex}{\includegraphics[scale=0.8]{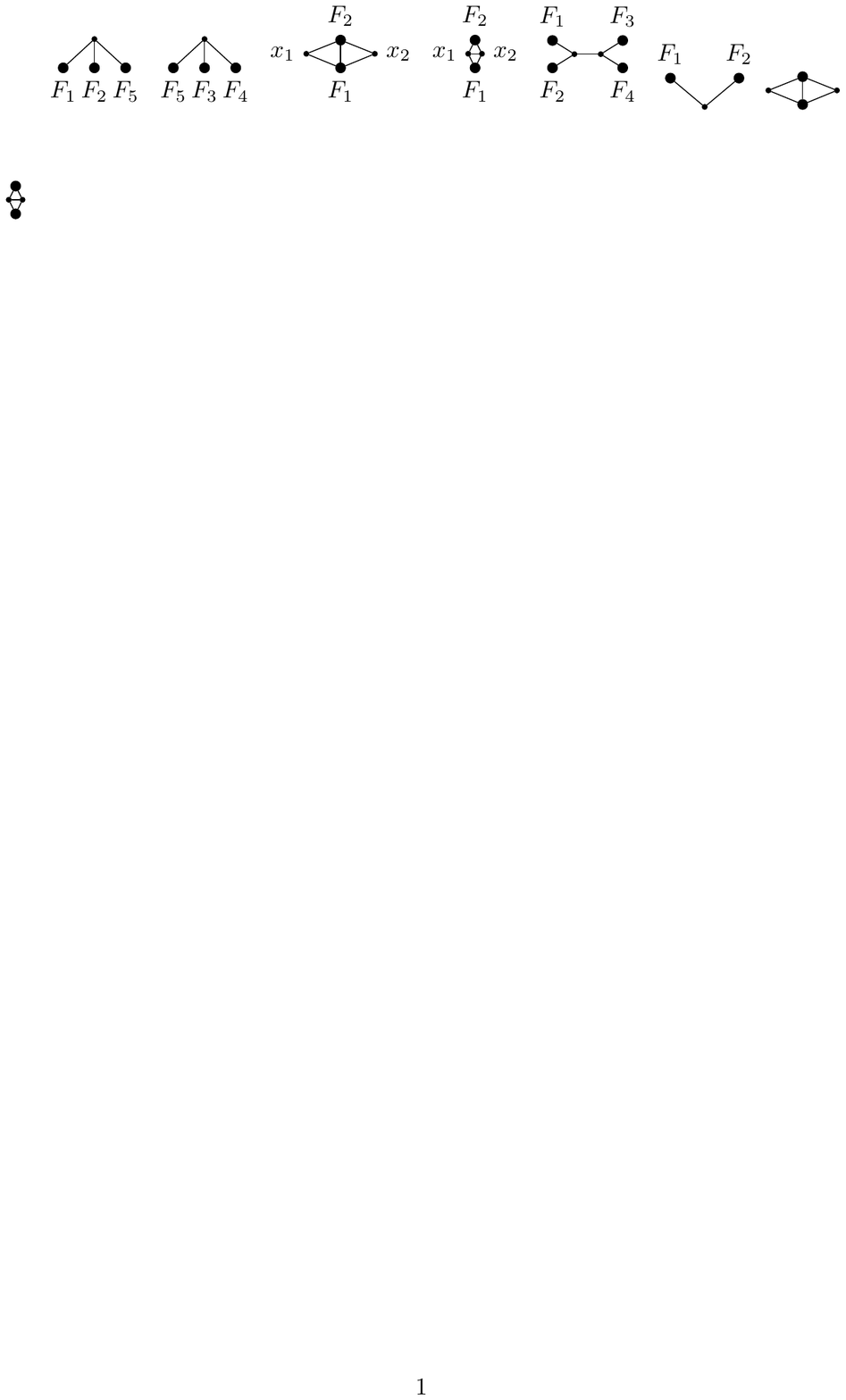}}, then $F_1$ and $F_2$ have exactly one slim common neighbor in $\mathfrak{h}$.
\end{lema}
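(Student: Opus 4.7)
The proof rests on one structural observation and a finite inspection of the five indecomposable factors $\mathfrak{g}_1,\ldots,\mathfrak{g}_5$ shown in Figure \ref{fg'}. The observation is that, by Property \ref{Hoffmanlinegraph}(ii) and condition (iii) of Definition \ref{directsum}, every fat neighbor of a slim vertex $x$ lies in the unique indecomposable factor $\mathfrak{h}_i'$ containing $x$. In particular, if $F_1,F_2$ are fat vertices lying in distinct factors $\mathfrak{h}_i',\mathfrak{h}_j'$, then no slim vertex can be adjacent to both, so $|N^s_{\mathfrak{h}}(F_1,F_2)|=0$. Both parts of the lemma therefore reduce to an analysis inside a single factor $\mathfrak{h}_i'\cong\mathfrak{g}_j$ for some $j$.

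For part (i), I would go through $\mathfrak{g}_1,\ldots,\mathfrak{g}_5$ and, in each picture, enumerate the unordered pairs of fat vertices and count their common slim neighbors. Direct inspection shows that in four of the five graphs any pair of fat vertices shares at most one slim neighbor, while in the remaining one, which is the Hoffman graph matching the small picture $3$ drawn in the lemma, the two fat vertices share exactly two slim neighbors and these two neighbors form an edge. This yields the bound $|N^s_\mathfrak{h}(F_1,F_2)|\le 2$ together with the adjacency statement when equality holds. The ``in particular'' clause then follows because the two fat vertices together with the two adjacent common slim neighbors and all four fat-slim edges between them form an indecomposable Hoffman graph isomorphic to picture $3$; by condition (iii) of Definition \ref{directsum}, no further fat vertex of $\mathfrak{h}$ is attached to those two slim vertices, so this is a complete factor of $\mathfrak{h}$, not just an induced Hoffman subgraph.

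For part (ii), the argument combines the opening observation with part (i). Let $\mathfrak{g}_j$ be the factor of $\mathfrak{h}$ containing picture $5$ as an induced Hoffman subgraph; the fat vertices $F_1,F_2$ of that picture $5$ are then fat vertices of $\mathfrak{g}_j$ sharing at least one common slim neighbor, and by the opening observation every common slim neighbor of $F_1,F_2$ in $\mathfrak{h}$ already lies in $\mathfrak{g}_j$. By part (i), a pair of fat vertices sharing more than one common slim neighbor can only occur in the factor matching picture $3$, and the hypothesis $\mathfrak{g}_j\not\cong$ picture $4$ is precisely the statement that this case is excluded (so pictures $3$ and $4$ represent the same Hoffman graph, a fact one reads off directly from Figure \ref{fg'}). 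Consequently $F_1,F_2$ have exactly one common slim neighbor in $\mathfrak{g}_j$, and hence in $\mathfrak{h}$. The main obstacle is really just careful bookkeeping across the five small pictures in Figure \ref{fg'}, together with matching the anonymous picture labels $3$, $4$, $5$ used in the lemma against the corresponding $\mathfrak{g}_j$'s; once those identifications are made, both claims follow immediately from counting fat-fat-slim adjacencies in each picture.
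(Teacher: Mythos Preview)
Your overall approach matches the paper's: reduce to a single indecomposable factor of $\mathfrak{h}$, then inspect $\mathfrak{g}_1,\ldots,\mathfrak{g}_5$ by hand. However, your justification for the reduction has a gap. You invoke condition~(iii) of Definition~\ref{directsum} to conclude that every fat neighbor of a slim vertex $x$ lies in the factor containing $x$; this is correct. But your next sentence --- ``if $F_1,F_2$ are fat vertices lying in distinct factors $\mathfrak{h}_i',\mathfrak{h}_j'$, then no slim vertex can be adjacent to both'' --- overlooks that fat vertices may belong to several factors simultaneously (only the \emph{slim} vertex sets are partitioned, by condition~(ii)). So from~(iii) alone you only know that each common slim neighbor $x_k$ of $F_1,F_2$ pulls $F_1$ and $F_2$ into \emph{its own} factor $\mathfrak{h}_{i_k}'$; you have not shown that all the $i_k$ coincide, and hence not that the entire set $N_\mathfrak{h}^s(F_1,F_2)$ sits inside one $\mathfrak{g}_j$. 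Without that, the finite inspection in part~(i) and the sentence ``by the opening observation every common slim neighbor of $F_1,F_2$ in $\mathfrak{h}$ already lies in $\mathfrak{g}_j$'' in part~(ii) are unjustified.

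The paper closes this gap by citing condition~(iv) rather than~(iii): two slim vertices lying in distinct factors share at most one common fat neighbor, so any two elements of $N_\mathfrak{h}^s(F_1,F_2)$ (which share the two distinct fat neighbors $F_1,F_2$) must lie in the same factor. With that single correction your inspection argument is exactly the paper's proof, and your identification of the unlabelled pictures (both picture~3 and picture~4 being $\mathfrak{g}_4$) is correct.
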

\begin{proof} ($i$) Suppose that $N^s_{\mathfrak{h}}(F_1,F_2)=\{x_1,x_2,\ldots,x_p\}$. By Definition \ref{directsum} $(iv)$, we find that these $p$ distinct slim vertices and two fat vertices should be in the same indecomposable factor of $\mathfrak{h}$. By Figure \ref{fg'}, we see that if $p\geq 2$, then $p=2$ and the only indecomposable factor is (isomorphic to) $\mathfrak{g}_4$.

\vspace{0.2cm}
($ii$) This follows from ($i$).
\end{proof}

%

\section{Forbidding factors $\mathfrak{g_1}$ and $\mathfrak{g_2}$}\label{fordding}
In this section, we will show that $\mathfrak{g}_1$ and $\mathfrak{g}_2$ can not occur as an indecomposable factor of $\mathfrak{h}$. For this, we first need  the following lemma:

\begin{lema}\label{disjointcn}
Any two distinct nonadjacent vertices $x$ and $y$ in $G$ have at most $2t+2$ common neighbors, that is, $\mu_{x,y}\leq 2t+2$.
\end{lema}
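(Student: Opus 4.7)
The plan is to exhibit a polynomial $p$ such that $M:=p(A)$ is positive semidefinite, has constant diagonal equal to $2t+2$, and has $(x,y)$-entry equal to $\mu_{x,y}$ for every nonadjacent pair $x,y$. Once such an $M$ is produced, positive semidefiniteness of the $2\times 2$ principal submatrix on $\{x,y\}$ gives $M_{xx}M_{yy}\geq M_{xy}^{2}$, i.e., $(2t+2)^{2}\geq \mu_{x,y}^{2}$, from which the desired $\mu_{x,y}\leq 2t+2$ follows immediately.

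To find $p$, I look at the four distinct eigenvalues $4t+1,\,2t-1,\,-1,\,-3$ of $A$ and try the quadratic that annihilates the ``middle'' two, namely $p(\lambda)=(\lambda+1)\bigl(\lambda-(2t-1)\bigr)=\lambda^{2}-(2t-2)\lambda-(2t-1)$. Then $p(-1)=p(2t-1)=0$, while $p(4t+1)=(4t+2)(2t+2)>0$ and $p(-3)=(-2)(-2t-2)=4(t+1)>0$. Hence $p$ is nonnegative on $\mathrm{spec}(A)$, and $M=p(A)=A^{2}-(2t-2)A-(2t-1)I$ is positive semidefinite.

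It remains to read off the entries of $M$. Using $A_{xx}=0$ and $(A^{2})_{xx}=4t+1$ (the valency), the diagonal is $M_{xx}=(4t+1)-(2t-1)=2t+2$, uniformly in $x$. For a nonadjacent pair $x,y$ one has $A_{xy}=0$ and $(A^{2})_{xy}=\mu_{x,y}$, whence $M_{xy}=\mu_{x,y}$. The $2\times 2$ principal minor inequality then closes the argument as above.

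The only creative step is noticing that a quadratic can kill the two ``middle'' eigenvalues $-1$ and $2t-1$ while remaining nonnegative at the two extreme ones $4t+1$ and $-3$, in such a way that the resulting diagonal constant matches the target $2t+2$; once this is spotted, everything else is a one-line computation, and no interlacing or Hoffman-graph machinery is needed for this particular lemma. I expect this to be the intended proof.
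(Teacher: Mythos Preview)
Your proof is correct and is essentially identical to the paper's own argument: the paper defines the same matrix $M=(A-(2t-1)I)(A+I)=A^{2}-2(t-1)A-(2t-1)I$, observes it is positive semidefinite with diagonal entries $2t+2$ and off-diagonal entries $\mu_{x,y}$ for nonadjacent pairs, and deduces $\mu_{x,y}\le 2t+2$ from the $2\times2$ principal minor. Your expectation that this is the intended proof is exactly right.
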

\begin{proof}
Define a matrix $M$ as follows:
\begin{equation}\label{semidefinitematrix1}
\begin{split}
M&=(A-\eta_1I)(A-\eta_2I)\\
 &=(A-(2t-1)I)(A+I)\\
 &=A^{2}-2(t-1)A-(2t-1)I.
\end{split}
\end{equation}
Then $M$ is positive semidefinite (as $A$ has no eigenvalues  between $\eta_1$ and $\eta_2$), and we have
\begin{equation}\label{semidefinitematrix2}
M_{(x,y)}= \left\{
\begin{array}{l l}
k-(2t-1)=2t+2, & \text{if }x=y;\\
-2(t-1)+\lambda_{x,y},& \text{if }x\sim y;\\
\mu_{x,y}, & \text{if }x\not\sim y.
\end{array}
\right.
\end{equation}

Since $M$ is positive semidefinite, all its principal submatrices are positive semidefinite. Let $x$ and $y$ be two distinct nonadjacent vertices of $G$. Then  $$\left(
\begin{array}{cc}
2t+2  & \mu_{x,y}\\
\mu_{x,y} & 2t+2
\end{array}
\right)$$
 is positive semidefinite and hence $\mu_{x,y}\le 2t+2$ holds.
\end{proof}

Using Lemma \ref{disjointcn}, we obtain the following result:
\begin{lema}\label{g1'g2'}

\item[$(i)$] The Hoffman graph $\mathfrak{g}_1$ can not be an indecomposable factor of $\mathfrak{h}$ when $t>1$.
\item[$(ii)$] The Hoffman graph $\mathfrak{g}_2$ can not be an indecomposable factor of $\mathfrak{h}$ when $t>1$.
\end{lema}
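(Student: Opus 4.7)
The plan is to argue by contradiction using Lemma \ref{disjointcn}, which bounds $\mu_{x,y} \leq 2t+2$ for any pair of nonadjacent vertices $x,y$ in $G$. For part $(i)$, assume that $\mathfrak{g}_1$ occurs as an indecomposable factor of $\mathfrak{h}$. First I would identify in $\mathfrak{g}_1$ a pair of slim vertices $x, y$ which are nonadjacent in its slim graph and hence in $G$; such a pair exists because the slim graph of $\mathfrak{g}_1$ is not a clique. The $2$-fat hypothesis together with Definition \ref{directsum}(iii) confines every fat neighbor of $x$ and of $y$ inside the factor $\mathfrak{g}_1$, which strongly constrains the local geometry of this pair.

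The core step is a lower-bound counting argument for $\mu_{x,y}$. Each fat neighbor $F$ of $x$ contributes its quasi-clique $Q_\mathfrak{h}(F)$ to $N_G(x)$, and similarly for $y$; when $x$ and $y$ share a common fat neighbor $F$, the entire set $Q_\mathfrak{h}(F) \setminus \{x,y\}$ consists of common neighbors of $x$ and $y$ in $G$. Using the valency $k = 4t+1$ together with Lemma \ref{commonneighborsg'}(i) (any two distinct fat vertices share at most two slim neighbors), I would estimate the quasi-cliques attached to $x$'s fat neighbors to have size of order $\Theta(t)$, and show that the overlaps forced by the structure of $\mathfrak{g}_1$ yield $\mu_{x,y} > 2t+2$ whenever $t > 1$, contradicting Lemma \ref{disjointcn}. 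For part $(ii)$, the same template should apply to $\mathfrak{g}_2$, with the pair $x, y$ chosen so as to reflect its distinct topology.

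The main obstacle will be sharpening the lower bound on $\mu_{x,y}$ to strictly exceed $2t+2$, since the interlacing bound of Lemma \ref{disjointcn} is only tight up to additive errors on the order of the quasi-clique size. This will require keeping careful track of which slim vertices in the quasi-cliques attached to $x$'s fat neighbors lie simultaneously in the quasi-cliques attached to $y$'s fat neighbors, and handling any coincidences between these fat neighbors in $V_f(\mathfrak{g}_1)$ or $V_f(\mathfrak{g}_2)$. The hypothesis $t > 1$ is expected to enter precisely as the threshold beyond which the quasi-cliques are large enough to force the required excess.
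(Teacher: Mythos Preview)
Your plan has a genuine gap: the bound $\mu_{x,y}\le 2t+2$ from Lemma~\ref{disjointcn} alone is not strong enough to reach a contradiction. In the paper's analysis of $\mathfrak{g}_1$, the slim vertices $x_1,x_2,x_3,x_4$ and fat vertices $F_1,\dots,F_4$ satisfy $a_1+a_2=4t+4$ (from the valency condition) and
\[
\mu_{x_1,x_2}+\mu_{x_1,x_4}=4t+\lambda_{x_2,x_4},\qquad 0\le\lambda_{x_2,x_4}\le 4.
\]
In particular one can have $a_1=a_2=2t+2$ and $\mu_{x_1,x_2}=\mu_{x_1,x_4}=2t+2$ with $\lambda_{x_2,x_4}=4$, so no single $\mu$-value is forced above $2t+2$. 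The obstacle you flag in your last paragraph is not a technicality to be ``sharpened''; it is fatal to the $2\times 2$ approach.

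What the paper does instead is go back to the positive semidefinite matrix $M=(A-(2t-1)I)(A+I)$ that underlies Lemma~\ref{disjointcn}, and take the $3\times 3$ principal submatrix $M_1$ on the triple $\{x_1,x_2,x_4\}$ (two nonadjacent pairs $x_1x_2$, $x_1x_4$ and one adjacent pair $x_2x_4$). After substituting the relation $\mu_{x_1,x_2}=4t+\lambda-\mu$ with $\mu=\mu_{x_1,x_4}$ and $\lambda=\lambda_{x_2,x_4}$, one computes $\det(M_1)$ as an explicit polynomial in $t,\lambda,\mu$ and checks that it is strictly negative for all admissible $(\lambda,\mu)$ once $t>1$, contradicting positive semidefiniteness. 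Part~$(ii)$ is handled the same way with a slightly different $3\times 3$ matrix. So the missing idea in your proposal is precisely this: you must exploit a \emph{three}-vertex principal minor of $M$, not just pairwise $\mu$-bounds.
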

\begin{proof}
$(i)$ Suppose that $\mathfrak{g}_1$ is an indecomposable factor of $\mathfrak{h}$, where $a_i=|V(Q_{\mathfrak{h}}(F_i))|$, as shown in Figure \ref{fg1'size}.
\begin{figure}[H]
\center
  \includegraphics[scale=1]{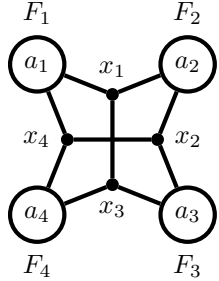}
  \vspace{0.1cm}
  \caption{$\mathfrak{g}_1$}
  \label{fg1'size}
\end{figure}
 By Lemma \ref{commonneighborsg'}, we find that $N^s_{\mathfrak{h}}(F_1,F_2)=\{x_1\},N^s_{\mathfrak{h}}(F_2,F_3)=\{x_2\},~N^s_{\mathfrak{h}}(F_1,F_4)=\{x_4\},~|N^s_{\mathfrak{h}}(F_1,F_3)|\leq 2,$ and $|N^s_{\mathfrak{h}}(F_2,F_4)|\leq 2.$ By the definition of direct sum, we know that if a vertex $x$ ($x\neq x_3$) is adjacent to $x_1$ in $G$, then $x\in N^s_{\mathfrak{h}}(F_1)$ or $x\in N^s_{\mathfrak{h}}(F_2)$. So $a_1+a_2-3=a_1-2+a_2-2+1=|N_G(x_1)|=k=4t+1$, that is $a_1+a_2=4t+4$. (In a similar way, we obtain that $a_2+a_3=a_3+a_4=a_1+a_4=4t+4$, so $a_1=a_3$ and $a_2=a_4$.) Note that $\mu_{x_1,x_2}=a_2-2+|N^s_{\mathfrak{h}}(F_1,F_3)|,~\mu_{x_1,x_4}=a_1-2+|N^s_{\mathfrak{h}}(F_2,F_4)|,$
$\lambda_{x_2,x_4}=|N^s_{\mathfrak{h}}(F_1,F_3)|+|N^s_{\mathfrak{h}}(F_2,F_4)|.$ We obtain
 \begin{equation}\label{1x1x2x4}
   \mu_{x_1,x_2}+\mu_{x_1,x_4}=4t+\lambda_{x_2,x_4}.
 \end{equation}

Without loss of generality, we may assume that $\mu_{x_1,x_4}\leq\mu_{x_1,x_2}$. From Lemma \ref{disjointcn} and Equation (\ref{1x1x2x4}), we obtain
\begin{equation}\label{newineqg1}
\begin{aligned}
&0\leq \lambda_{x_2,x_4}\leq 4, \quad \mu_{x_1,x_4}\le\mu_{x_1,x_2}\leq 2t+2,\\
&\text{and }2t-2+\lambda_{x_2,x_4}\leq\mu_{x_1,x_4}\leq 2t+\bigg\lfloor\frac{\lambda_{x_2,x_4}}{2}\bigg\rfloor.
\end{aligned}
\end{equation}

Take the positive semidefinite principal submatrix $M_1$ of $M$, corresponding to the vertices $\{x_1,x_2,x_4\}$. Then, we obtain (by using (\ref{semidefinitematrix2})):
\begin{equation*}
M_1=\begin{pmatrix}
2t+2  &\mu_{x_{1},x_{2}} &\mu_{x_{1},x_{4}}  \\
\mu_{x_{1},x_{2}}& 2t+2 &-2(t-1)+\lambda_{x_2,x_4} \\
\mu_{x_{1},x_{4}}&-2(t-1)+\lambda_{x_2,x_4} & 2t+2
\end{pmatrix}.
\end{equation*}

Replacing $\mu_{x_{1},x_{4}}$ by $\mu$ and $\lambda_{x_2,x_4}$ by $\lambda$ and using (\ref{1x1x2x4}), we have
\begin{equation*}
M_1=\begin{pmatrix}
2t+2  &4t+\lambda-\mu &\mu \\
4t+\lambda-\mu & 2t+2 &-2(t-1)+\lambda \\
\mu & -2(t-1)+\lambda & 2t+2
\end{pmatrix}.
\end{equation*}

The above matrix $M_1$ has determinant

\begin{equation*}
\begin{split}
 det(M_1)=&-32 t^3-8\lambda t^2+\big((8\lambda+32)\mu-4\lambda^2-16\lambda+32\big)t\\
 &-(2\lambda+8)\mu^2+(2\lambda^2+8\lambda)\mu-4\lambda^2-8\lambda,
\end{split}
\end{equation*}
where $0\leq \lambda\leq4,~2t-2+\lambda\leq\mu\leq 2t+\lfloor\frac{\lambda}{2}\rfloor$ (by (\ref{newineqg1})).

If $t>1$, by checking all the possible values of $\lambda$ and $\mu$, we obtain that $\text{det}(M_1)<0$ and this is impossible since $M_1$ is positive semidefinite.\\

$(ii)$ can be shown in a similar way. Suppose that $\mathfrak{g}_2$ is an indecomposable factor of $\mathfrak{h}$, where $a_i=|V(Q_{\mathfrak{h}}(F_i))|$, as shown in Figure \ref{fg2'size}.
\begin{figure}[H]
\center
  \includegraphics[scale=1]{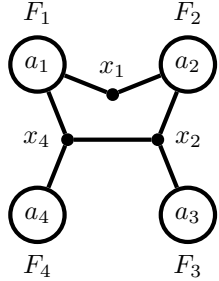}
  \vspace{0.1cm}
  \caption{$\mathfrak{g}_2$}
  \label{fg2'size}
\end{figure}

Then the submatrix $M_1$ is replaced by
\begin{equation*}
M_2=\begin{pmatrix}
2t+2  &4t+1+\lambda-\mu &\mu  \\
4t+1+\lambda-\mu& 2t+2 &-2(t-1)+\lambda \\
\mu&-2(t-1)+\lambda & 2t+2
\end{pmatrix},
\end{equation*}
with determinant:

\begin{equation*}
\begin{split}
 det(M_2)=&-32 t^3-(8\lambda+16)t^2+\big((8\lambda+32)\mu-4\lambda^2-20\lambda+14\big)t\\
 &-(2\lambda+8)\mu^2+(2\lambda^2+10\lambda+8)\mu-4\lambda^2-12\lambda-2,
\end{split}
\end{equation*}
where $0\leq \lambda=\lambda_{x_2,x_4}\leq4$ and $2t-1+\lambda\leq \mu=\mu_{x_{1},x_{4}}\leq 2t+\lfloor\frac{1+\lambda}{2}\rfloor$.

If $t>1$, by checking all the possible values of $\lambda$ and $\mu$, we obtain that $\text{det}(M_2)<0$ and the result follows, as this gives a contradiction.
\end{proof}

\section{The order of quasi-cliques}\label{determingorder}
\subsection{An upper bound on the order of quasi-cliques}
From the above section, we find that the only possible indecomposable factors of $\mathfrak{h}$ are $\mathfrak{g}_3$, $\mathfrak{g}_4$ and $\mathfrak{g}_5$.
\begin{prp}\label{quasicliqueorder}
Let $q$ be the order of a quasi-clique $Q$ corresponding to a fat vertex $F$ in $\mathfrak{h}$. Then $q\leq2t+2$ when $t>1$.
\end{prp}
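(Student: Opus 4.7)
The plan is to reduce to a short interlacing computation on a two-part partition, after first establishing that the quasi-clique $Q = Q_{\mathfrak{h}}(F)$ is an honest clique of $G$.

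For the structural step, Lemma \ref{g1'g2'} rules out $\mathfrak{g}_1$ and $\mathfrak{g}_2$ as indecomposable factors, so the fat vertex $F$ lies in an indecomposable factor isomorphic to one of $\mathfrak{g}_3, \mathfrak{g}_4, \mathfrak{g}_5$. A direct inspection of Figure \ref{fg'} shows that in each of these three Hoffman graphs the slim neighbors of any fat vertex are pairwise adjacent in the slim subgraph; since the slim graph of $\mathfrak{h}$ equals $G$ by Property \ref{Hoffmanlinegraph}, it follows that $Q$ is a $q$-clique of $G$.

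For the main estimate, I would consider the partition $\pi = \{Q,\, V(G) \setminus Q\}$ and write down the $2 \times 2$ quotient matrix $\tilde B$ of $A$. The clique property together with $k$-regularity (with $k = 4t+1$) give $\tilde B_{1,1} = q-1$ and $\tilde B_{1,2} = 4t+2-q$; counting edges between the two parts gives $\tilde B_{2,1} = q(4t+2-q)/(n-q)$ with $n = 2(t+1)^2$, and the bottom row is forced by the fact that each row of $\tilde B$ sums to $k$. Hence $k$ is automatically an eigenvalue of $\tilde B$, and the other eigenvalue is $\mu_2 = (q-1) - q(4t+2-q)/(n-q)$. By Lemma \ref{quotienteigenvalues2}(i), the eigenvalues of $\tilde B$ interlace those of $A$, so in particular $\mu_2 \leq \lambda_2(A) = 2t-1$. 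Substituting $n = 2(t+1)^2$ and using $n - 4t - 2 = 2t^2$, this inequality simplifies to $q(t+1) \leq 2(t+1)^2$, which yields $q \leq 2t+2$.

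The main obstacle is the structural step rather than the algebra: one must verify by case inspection of Figure \ref{fg'} that in every permitted indecomposable factor the slim neighborhood of a fat vertex forms a clique in the slim graph. Once this is in hand, the interlacing calculation is routine, and the bound $2t+2$ is the natural sharp target because the 2-clique extension of the $(t+1) \times (t+1)$-grid already contains $K_{2t+2}$ as a subgraph.
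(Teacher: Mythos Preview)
Your structural step contains a genuine error: it is \emph{not} true that in each of $\mathfrak{g}_3,\mathfrak{g}_4,\mathfrak{g}_5$ the slim neighbours of every fat vertex are pairwise adjacent. In $\mathfrak{g}_3$ (see Figure~\ref{fg3'size}) the middle fat vertex $F_2$ has two slim neighbours $x_1$ and $x_2$ that are \emph{non}-adjacent in the slim graph. Consequently, when $F$ plays the role of $F_2$ in some $\mathfrak{g}_3$-factor, the quasi-clique $Q_{\mathfrak{h}}(F)$ need not be a clique, and your quotient-matrix entry $\tilde B_{1,1}=q-1$ is not justified. Your subsequent algebra, which is correct, then only delivers the bound you want under a hypothesis that can fail.

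This is exactly why the paper's argument is more involved. What one can salvage from the case analysis is that each vertex of $Q$ has at most one non-neighbour inside $Q$ (Claim~\ref{claimvalency}), so the average degree in $Q$ is $q-2+\epsilon$ with $0\le\epsilon\le1$. Running your interlacing computation with $q-2+\epsilon$ in place of $q-1$ only yields $q\le 2t+3$ (Claim~\ref{claimbound}); the borderline case $q=2t+3$ genuinely survives the $2\times 2$ quotient bound and has to be eliminated by a separate double-counting argument (Claim~\ref{no2t+3}), showing that a quasi-clique of order $2t+3$ would force too many quasi-cliques of order $2t+1$. Your write-up would need both of these extra ingredients.
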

\begin{proof}
We show the following three claims from which the proposition follows.

\begin{claim}\label{claimvalency}
In the quasi-clique $Q$, every vertex has valency at least $q-2$.
\end{claim}
\begin{proof}
If there exists a vertex that has two nonneighbors in $Q$, then in $\mathfrak{h}$, these three slim vertices should be in the same indecomposable factor by Definition \ref{directsum} $(iv)$. But neither \raisebox{-0.5ex}{\includegraphics[scale=0.1]{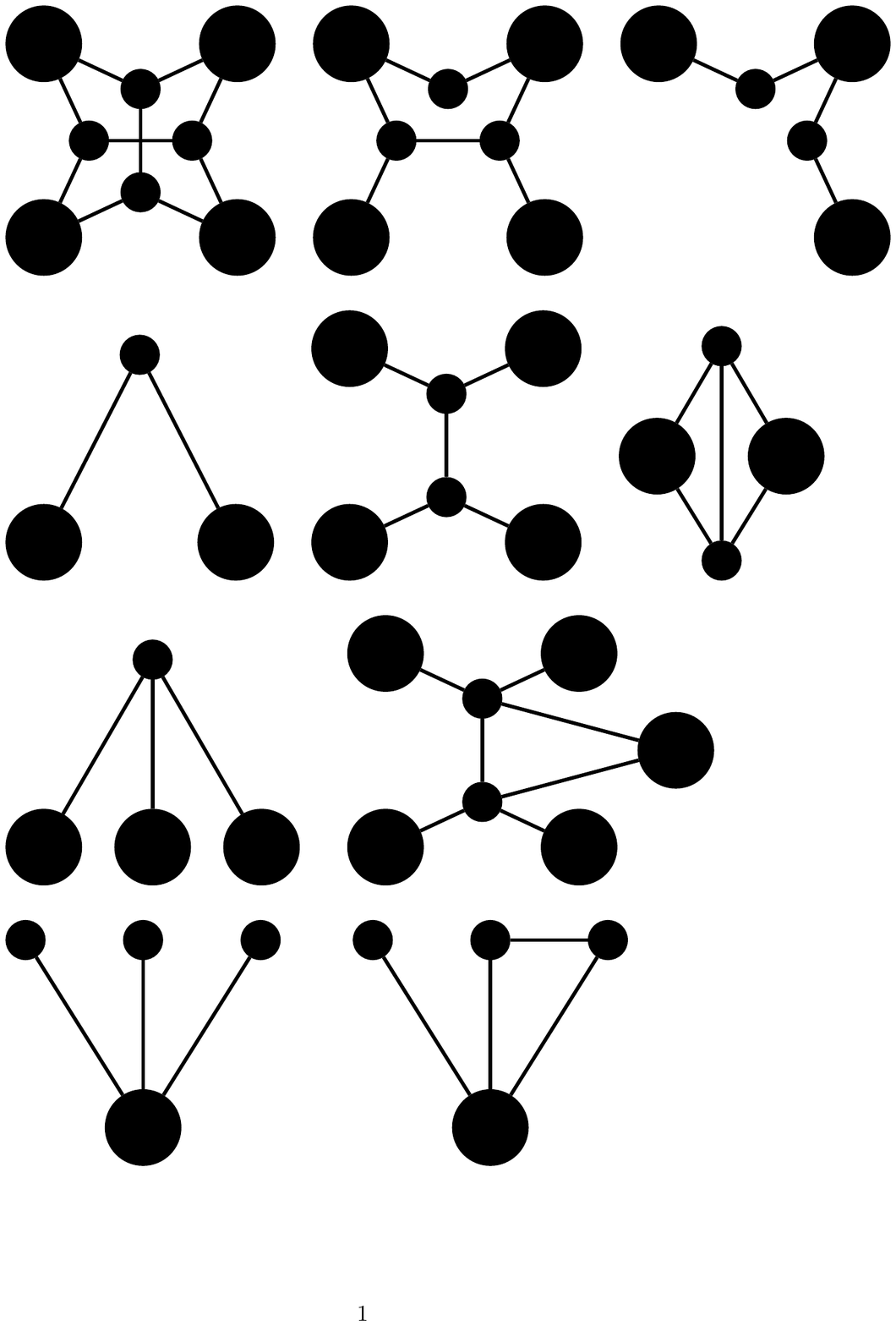}} nor \raisebox{-0.5ex}{\includegraphics[scale=0.1]{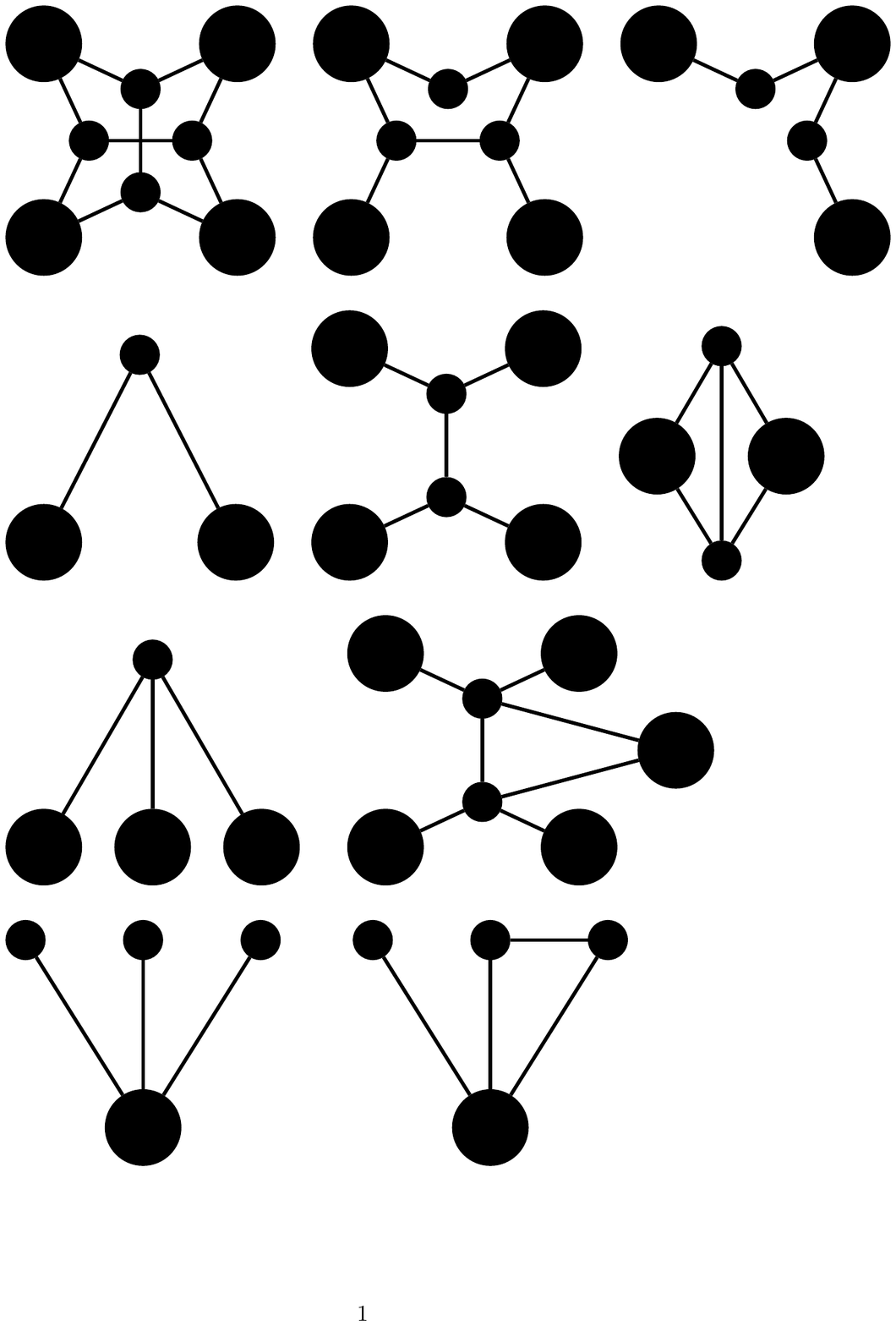}} is an induced Hoffman subgraph of $\mathfrak{g}_3$, $\mathfrak{g}_4$ or $\mathfrak{g}_5$. Hence the claim holds.
\end{proof}

\begin{claim}\label{claimbound}
The order $q$ of the quasi-clique $Q$ is at most $2t+3$ when $t>1$, and if $q=2t+3$, then $Q$ has exactly a vertex of valency $2t+2$.
\end{claim}
\begin{proof}
Let us consider the partition $\pi=\big\{V(Q),V(G)-V(Q)\big\}$ of $V(G)$. The quotient matrix $\widetilde{B}$ of $A$ with respect to the partition $\pi$ is
\begin{equation}\label{g2matrix}
  \widetilde{B}=\left(
     \begin{array}{cc}
       q-2+\epsilon & 4t+1-(q-2+\epsilon) \\
       \frac{(4t+1-(q-2+\epsilon))q}{2(t+1)^2-q} & 4t+1- \frac{(4t+1-(q-2+\epsilon))q}{2(t+1)^2-q} \\
     \end{array}
   \right)
\end{equation}
with eigenvalues $k(=4t+1)$ and $q-2+\epsilon- \frac{(4t+1-(q-2+\epsilon))q}{2(t+1)^2-q}$, where $0\leq\epsilon\leq1$ (by Claim \ref{claimvalency}). By interlacing (Lemma \ref{quotienteigenvalues2} $(i)$), we obtain that, the second eigenvalue of the quotient matrix $\widetilde{B}$ is at most $2t-1$, hence $q-2+\epsilon- \frac{(4t+1-(q-2+\epsilon))q}{2(t+1)^2-q}\leq 2t-1$ holds.

If $q=2t+4$, then
$2t+2+\epsilon-\frac{(2t-1-\epsilon)(t+2)}{t^2+t-1}=q-2+\epsilon- \frac{(4t+1-(q-2+\epsilon))q}{2(t+1)^2-q}\leq 2t-1$.
But this is not possible when $t>1$.

If $q=2t+3$, then (\ref{g2matrix}) becomes
\begin{equation*}
  \widetilde{B}=\left(
     \begin{array}{cc}
       2t+1+\epsilon & 2t-\epsilon \\
       \frac{(2t-\epsilon)(2t+3)}{2t^2+2t-1} & 4t+1-\frac{(2t-\epsilon)(2t+3)}{2t^2+2t-1}  \\
     \end{array}
   \right)
\end{equation*}
and $2t+1+\epsilon-\frac{(2t-\epsilon)(2t+3)}{2t^2+2t-1}=q-2+\epsilon-\frac{(4t+1-(q-2+\epsilon))q}{2(t+1)^2-q}\leq 2t-1$.
By solving this inequality, we have $0\leq\epsilon\leq\frac{1}{1+t}$. Suppose that there are $m_1$ vertices with valency $2t+1$ and $m_2$ vertices with valency $2t+2$ in $Q$. Then $$m_1+m_2=2t+3,$$
$$\frac{(2t+1)m_1+(2t+2)m_2}{m_1+m_2}=2t+1+\frac{m_2}{m_1+m_2}\leq 2t+1+\frac{1}{1+t}.$$
Since $m_1$ is an even number by the handshaking lemma, it follows that the only possible solution is $m_1=2t+2,~m_2=1$. So the claim holds.
\end{proof}

Finally, we show the following:
\begin{claim}\label{no2t+3}
There are no quasi-cliques of order $2t+3$ when $t>1$.
\end{claim}
\begin{proof}Assume that there exists a quasi-clique $Q^\prime$ with order $2t+3$, corresponding to fat vertex $F$ in $\mathfrak{h}$. Then, from Claim \ref{claimbound}, we obtain that, in $Q^\prime$, there exist two distinct vertices which are not adjacent, say $x_1$ and $x_2$. Now consider the factor containing the slim vertices $x_1,~x_2$ and fat vertex $F$. Then we see that $F$ should be the fat vertex $F_2$ in $\mathfrak{g}_3$ (in Figure \ref{fg3'size}) and $Q^\prime=Q_{\mathfrak{h}}(F_2)$ with order $a_2=2t+3$.

Moreover, we obtain that $a_1-1+a_2-2=|N_G(x_1)|=k=4t+1$ and $a_2-2+a_3-1=|N_G(x_2)|=k=4t+1$, where $a_1=|V(Q_{\mathfrak{h}}(F_1))|$ and $a_3=|V(Q_{\mathfrak{h}}(F_3))|$. Then $|V(Q_{\mathfrak{h}}(F_1))|=|V(Q_{\mathfrak{h}}(F_3))|=2t+1$ and $ V(Q_{\mathfrak{h}}(F_1))\bigcap V(Q_{\mathfrak{h}}(F_2))=\{x_1\}$, $V(Q_{\mathfrak{h}}(F_3))\bigcap V(Q_{\mathfrak{h}}(F_2))=\{x_2\}$ by Lemma \ref{commonneighborsg'} $(ii)$. By using Claim \ref{claimbound} again, we find that there are exactly $t+1$ pairs of non adjacent vertices in $Q^\prime$ and for each such a pair of vertices, we find two quasi-cliques with order $2t+1$ containing exactly one of them as a vertex, respectively. It means that there are at least $2t+2$ distinct quasi-cliques with order $2t+1$.\\
Now let us estimate the cardinality of the set $W=\big\{(x,Q^{\prime\prime})
~|~x\in V(Q^{\prime\prime})$ and $Q^{\prime\prime}$ is a quasi-clique corresponding to some fat vertex in $\mathfrak{h}$ with order $2t+1$ or $2t+3\big\}$ by double counting. On the one hand, $|W|\leq 2\cdot2(t+1)^2$,  since every vertex can only be a member of at most $2$ such quasi-cliques considering its valency is $4t+1$. On the other hand, we know there are at least $2t+2$ quasi-cliques with order $2t+1$ and at least $1$ quasi-clique with order $2t+3$. So $|W|\geq (2t+2)(2t+1)+1\cdot(2t+3)$. Hence $4(t+1)^2\geq (2t+2)(2t+1)+(2t+3)$, a contradiction. This shows the claim.
\end{proof}
The proposition follows from Claims \ref{claimvalency}, \ref{claimbound} and \ref{no2t+3}.
\end{proof}
In addition, we will give a lemma about cliques of $G$ that will be used in next sections.

\begin{lema}\label{HoffmanBound}
Let $c$ be the order of a clique $C$ in $G$, then $c\leq 2t+2$. If equality holds, then every vertex $x\in V(G)-V(C)$ has exactly $2$ neighbors in $C$.
\end{lema}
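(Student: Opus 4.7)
The plan is a straightforward interlacing argument applied to the two-cell partition $\pi=\{V(C),\,V(G)\setminus V(C)\}$, using only that $C$ is a clique and that $G$ has the prescribed spectrum.

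First I would write down the quotient matrix $\widetilde{B}$ of $A$ with respect to $\pi$. Because $C$ is a clique of order $c$, the $(1,1)$-entry of $\widetilde{B}$ is $c-1$. Regularity of $G$ with valency $k=4t+1$ forces both row sums of $\widetilde{B}$ to equal $k$, so the $(1,2)$-entry is $4t+2-c$, and a double count of the edges joining $V(C)$ to $V(G)\setminus V(C)$ gives the $(2,1)$-entry as $c(4t+2-c)/(2(t+1)^{2}-c)$. In particular, the eigenvalues of $\widetilde{B}$ are $k$ and
\[
\mu \;=\; (c-1)\,-\,\frac{c(4t+2-c)}{2(t+1)^{2}-c}.
\]

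Next, by Lemma \ref{quotienteigenvalues2}(i) these eigenvalues interlace the spectrum of $A$, so $\mu\le\eta_{1}=2t-1$. Clearing the positive denominator $2(t+1)^{2}-c$ and collecting terms, this inequality reduces to $2ct(t+1)\le 4t(t+1)^{2}$, i.e.\ $c\le 2t+2$, which gives the stated bound.

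For the equality case, assume $c=2t+2$. Substituting into the formula for $\mu$ one checks directly that $\mu=2t-1=\eta_{1}$, so the two eigenvalues of $\widetilde{B}$ coincide with the top two eigenvalues $\eta_{0},\eta_{1}$ of $A$. Thus the interlacing is tight in the sense of Lemma \ref{quotienteigenvalues2}(ii), which forces $\pi$ to be equitable. Equitability means that every vertex in $V(G)\setminus V(C)$ has the same number of neighbors in $V(C)$, and this constant row sum is precisely the $(2,1)$-entry evaluated at $c=2t+2$, namely $(2t+2)\cdot 2t/\bigl(2t(t+1)\bigr)=2$, as required.

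I do not expect a real obstacle here: the proof is a clean, self-contained application of Haemers' interlacing lemma together with the equitable-partition corollary already recorded as Lemma \ref{quotienteigenvalues2}, and no Hoffman graph machinery is needed. The only mild point of care is confirming that the value of $\mu$ at $c=2t+2$ indeed matches $\eta_{1}$ exactly so that the interlacing hypothesis of Lemma \ref{quotienteigenvalues2}(ii) is satisfied.
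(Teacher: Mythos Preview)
Your proposal is correct and is essentially the same argument the paper gives: the paper also uses the two-cell partition $\pi=\{V(C),V(G)\setminus V(C)\}$, invokes interlacing on the quotient matrix (this is exactly the $\epsilon=1$ specialization of the quasi-clique computation in Claim~\ref{claimbound}), and then notes that at $c=2t+2$ the quotient eigenvalues are $k$ and $2t-1$, so tight interlacing forces $\pi$ to be equitable with $(2,1)$-entry equal to $2$. Your write-up simply makes the algebra explicit rather than referring back to Claim~\ref{claimbound}.
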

\begin{proof}
For the inequality case, exactly the same argument applies by replacing $\epsilon$ by $1$ in the proof of Claim \ref{claimbound}. If equality holds, then we have tight interlacing, since $\widetilde{B}$ has $k$ and $2t-1$ as its eigenvalues, which are also the largest and second largest eigenvalues of $A$. So by Lemma \ref{quotienteigenvalues2} $(ii)$, the partition $\pi=\big\{V(C),V(G)-V(C)\big\}$ is equitable and by (\ref{g2matrix}) ($q=2t+2$), we obtain that every vertex in $V(G)-V(C)$ has exactly $2$ neighbors in $C$.
\end{proof}


\subsection{Determining the order of the quasi-cliques for $\mathfrak{g}_3$ and $\mathfrak{g}_4$}
In this subsection, we will determine the order of quasi-cliques for each of the remaining indecomposable factors $\mathfrak{g}_3$ and $\mathfrak{g}_4$. First we consider the indecomposable factor $\mathfrak{g}_3$.
\begin{lema}\label{g3'}
Suppose that $\mathfrak{g}_3$ is an indecomposable factor of $\mathfrak{h}$ with fat vertices $F_1,F_2$ and $F_3$. Then for $i=1,2,3$, the quasi-clique $Q_{\mathfrak{h}}(F_i)$ corresponding to $F_i$ has order $2t+2$ when $t>1$.
\end{lema}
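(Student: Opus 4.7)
The strategy is to derive two linear equations relating $a_1, a_2, a_3$ by computing the valencies in $G$ of the two slim vertices $x_1, x_2$ living inside the factor $\mathfrak{g}_3$, and then pin them down using the upper bound $a_i \leq 2t+2$ from Proposition \ref{quasicliqueorder}.

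First I would record the structure of $\mathfrak{g}_3$: it has three fat vertices $F_1, F_2, F_3$ and two slim vertices $x_1, x_2$, with $x_1$ the unique slim neighbor (inside $\mathfrak{g}_3$) of both $F_1$ and $F_2$, with $x_2$ the unique slim neighbor of both $F_2$ and $F_3$, and with $x_1 \not\sim x_2$. Lemma \ref{commonneighborsg'}(ii) upgrades these uniqueness statements to the whole Hoffman graph $\mathfrak{h}$: namely $Q_{\mathfrak{h}}(F_1) \cap Q_{\mathfrak{h}}(F_2) = \{x_1\}$ and $Q_{\mathfrak{h}}(F_2) \cap Q_{\mathfrak{h}}(F_3) = \{x_2\}$, while part (i) forces $Q_{\mathfrak{h}}(F_1) \cap Q_{\mathfrak{h}}(F_3) = \emptyset$.

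Next I would show that the slim neighbors of $x_1$ in $G$ are precisely the elements of $(Q_{\mathfrak{h}}(F_1) \cup Q_{\mathfrak{h}}(F_2)) \setminus \{x_1, x_2\}$. The inclusion ``$\subseteq$'' is immediate since any slim neighbor of $x_1$ must share a fat neighbor with $x_1$ in $\mathfrak{h}$ and the only fat neighbors of $x_1$ are $F_1, F_2$. For the reverse inclusion, take $y \in Q_{\mathfrak{h}}(F_1) \cup Q_{\mathfrak{h}}(F_2)$ with $y \neq x_1, x_2$; since $x_1, x_2$ are the only slim vertices of $\mathfrak{g}_3$, such a $y$ lies in a factor different from the one containing $x_1$; by Definition \ref{directsum}(iv) it shares exactly one fat neighbor with $x_1$ (it cannot share both $F_1$ and $F_2$, which would violate (iv)), and hence $y \sim x_1$ in $\mathfrak{h}$. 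Using $|Q_{\mathfrak{h}}(F_1) \cap Q_{\mathfrak{h}}(F_2)| = 1$ to avoid double counting, this yields $|N_G(x_1)| = (a_1 - 1) + (a_2 - 2) = a_1 + a_2 - 3$. Regularity of $G$ with valency $4t+1$ then gives $a_1 + a_2 = 4t+4$, and a symmetric computation at $x_2$ gives $a_2 + a_3 = 4t+4$.

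Finally, Proposition \ref{quasicliqueorder} gives $a_1, a_2, a_3 \leq 2t+2$ for $t>1$, and combining with $a_1 + a_2 = 4t+4$ immediately forces $a_1 = a_2 = 2t+2$, and likewise $a_3 = 2t+2$. The only slightly delicate step is the neighbor-counting for $x_1$, where one has to use condition (iv) of the direct sum carefully to rule out ``double-fat-neighbor'' configurations across factors; once that book-keeping is in place, the rest reduces to elementary arithmetic.
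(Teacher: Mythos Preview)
Your argument is correct and follows the paper's proof essentially verbatim: compute $|N_G(x_1)|=(a_1-1)+(a_2-2)=4t+1$, combine with the upper bound of Proposition~\ref{quasicliqueorder} to force $a_1=a_2=2t+2$, and repeat at $x_2$ for $a_3$. One harmless aside: Lemma~\ref{commonneighborsg'}(i) only gives $|Q_{\mathfrak{h}}(F_1)\cap Q_{\mathfrak{h}}(F_3)|\le 2$, not that the intersection is empty, but you never actually use that claim in the neighbor count.
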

\begin{proof}
Let $\mathfrak{g}_3$ be an indecomposable factor of $\mathfrak{h}$ as shown in Figure \ref{fg3'size}, where $a_i=|V(Q_{\mathfrak{h}}(F_i))|$, for $i=1,2,3$.
\begin{figure}[H]
\center
  \includegraphics[scale=1]{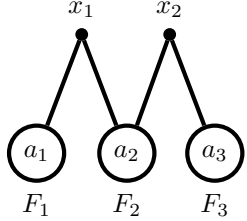}
  \vspace{0.1cm}
  \caption{$\mathfrak{g}_3$}
  \label{fg3'size}
\end{figure}

It is clear that $a_1-1+a_2-2=|N_G(x_1)|=k=4t+1$, that is, $a_1+a_2=4t+4$. From Proposition \ref{quasicliqueorder}, it follows that $a_1=a_2=2t+2$. By interchanging the roles of $x_1$ and $x_2$, the result follows.
\end{proof}

Now we consider the indecomposable factor $\mathfrak{g}_4$.
\begin{lema}\label{g4'}
Suppose that $\mathfrak{g}_4$ is an indecomposable factor of $\mathfrak{h}$ with fat vertices $K_1$ and $K_2$ and slim vertices $x$ and $y$. Then for $i=1,2$, the quasi-clique $Q_{\mathfrak{h}}(K_i)$ corresponding to $K_i$ has order $2t+2$ when $t>1$.

Moreover, the partition $\pi=\{V_1,V_2,V_3\}$ on $V(G)$ is equitable with quotient matrix $$\left(
\begin{array}{ccc}
1  & 4t & 0\\
2 & 2t-1 & 2t\\
0 & 4 & 2t-3
\end{array}
\right),$$ where $V_1=\{x,y\},~V_2=V(Q_{\mathfrak{h}}(K_1))\bigcup V(Q_{\mathfrak{h}}(K_2))-V_1$ and $V_3=V(G)-V_1\bigcup V_2.$
\end{lema}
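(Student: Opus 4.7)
My approach has three phases: first I determine the sizes $a_i:=|V(Q_\mathfrak{h}(K_i))|$; second I fill in the quotient matrix of $\pi$ from a single walk count; and third I promote this averaged quotient matrix to an equitable partition by verifying tight interlacing against the spectrum of $A$.

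\emph{Phase 1 ($a_1=a_2=2t+2$).} Since $\mathfrak{g}_4$ contains only $K_1,K_2$ as fat vertices, Definition \ref{directsum}$(iii)$ forces every fat neighbour of $x$ to lie in $\{K_1,K_2\}$. Therefore any slim neighbour of $x$ in $G$ is either the other slim vertex $y$ of $\mathfrak{g}_4$ (to which $x$ is adjacent by Lemma \ref{commonneighborsg'}$(i)$, since $x,y$ are the two common slim neighbours of $K_1,K_2$) or lies in a factor distinct from $\mathfrak{g}_4$ and shares exactly one of $K_1,K_2$ with $x$, by Definition \ref{directsum}$(iv)$. Combined with Lemma \ref{commonneighborsg'}$(i)$ (which now forces $V(Q_\mathfrak{h}(K_1))\cap V(Q_\mathfrak{h}(K_2))=\{x,y\}$), this yields $N_G(x)=\bigl(V(Q_\mathfrak{h}(K_1))\cup V(Q_\mathfrak{h}(K_2))\bigr)\setminus\{x\}$, so $a_1+a_2-3=|N_G(x)|=4t+1$. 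Together with $a_i\le 2t+2$ from Proposition \ref{quasicliqueorder}, this forces $a_1=a_2=2t+2$. The symmetric analysis at $y$ identifies $V_2$ with $N_G(x)\setminus\{y\}=N_G(y)\setminus\{x\}$, so $\lambda_{x,y}=|V_2|=4t$; every $z\in V_2$ lies in a factor different from $\mathfrak{g}_4$ and shares exactly one of $K_1,K_2$ with each of $x,y$, hence is adjacent to both by Definition \ref{directsum}$(iv)$; no vertex of $V_3$ is adjacent to $x$ or $y$. These facts give the first row $(1,4t,0)$ and the entries $(2,1)=2,(3,1)=0$ of the quotient matrix $\widetilde{B}$ of $\pi$.

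\emph{Phase 2 (one walk count).} The only remaining input is the number $e_{22}$ of edges inside $V_2$. Substituting $\lambda_{x,y}=4t$ into (\ref{ways}) gives $A^3_{x,y}=8t^2+4t+1$, while counting length-$3$ walks $x\to u\to v\to y$ with $u\in N_G(x)=\{y\}\cup V_2$ and $v\in N_G(y)=\{x\}\cup V_2$ partitions into $(4t+1)$ walks with $u=y$, $4t$ walks with $u\in V_2,v=x$, and $2e_{22}$ walks with $u,v\in V_2$. Equating the two expressions for $A^3_{x,y}$ yields $e_{22}=2t(2t-1)$ and hence the \emph{average} entry $\widetilde{B}_{(2,2)}=2e_{22}/|V_2|=2t-1$. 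The row-sum identity for $\widetilde{B}$ together with the double-count $|V_2|\cdot\widetilde{B}_{(2,3)}=|V_3|\cdot\widetilde{B}_{(3,2)}$, using $|V_2|=4t$ and $|V_3|=2t^2$, then force $\widetilde{B}_{(2,3)}=2t$, $\widetilde{B}_{(3,2)}=4$ and $\widetilde{B}_{(3,3)}=4t-3$.

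\emph{Phase 3 (tight interlacing).} A direct factorisation shows that the resulting matrix $\widetilde{B}$ has characteristic polynomial $(\lambda-(4t+1))(\lambda-(2t-1))(\lambda+3)$, so its eigenvalues $\{4t+1,\,2t-1,\,-3\}$ are exactly the two largest and the smallest eigenvalues of $A$. Hence the interlacing of Lemma \ref{quotienteigenvalues2}$(i)$ is tight (take $j=2$ in the Interlacing Lemma), and Lemma \ref{quotienteigenvalues2}$(ii)$ then forces $\pi$ to be equitable with quotient matrix $\widetilde{B}$. The main obstacle is precisely this final step: Phase 2 alone only produces the \emph{averages} of the non-trivial entries, and a priori it is not clear that every vertex of $V_3$ has exactly four neighbours in $V_2$. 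Tight interlacing with the extremal eigenvalues of $A$ is exactly what upgrades the averaged quotient matrix to an equitable partition, so one never has to unpack the individual adjacencies inside $V_2\cup V_3$.
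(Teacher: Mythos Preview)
Your argument is correct; in particular the walk count in Phase~2 and the tight-interlacing check in Phase~3 go through exactly as you claim.  The one place where your route genuinely diverges from the paper is in how the unknown entries of $\widetilde{B}$ are pinned down.  The paper leaves $\alpha:=\widetilde{B}_{(3,2)}$ as a free parameter, writes the two non-trivial eigenvalues $\theta_1,\theta_2$ of $\widetilde{B}$ as functions of $\alpha$, and then uses the interlacing bounds $-3\le\theta_i\le 2t-1$ from Lemma~\ref{quotienteigenvalues2}$(i)$ to squeeze out $\alpha=4$ (the two resulting inequalities are simultaneously satisfiable only at that value, where they become equalities).  You instead go back to the Hoffman-polynomial identity (\ref{ways}), read off $A^3_{x,y}$, and count length-$3$ walks to obtain $e_{22}$ directly; the remaining entries then drop out of row sums and the double count.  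Both approaches land on the same $\widetilde{B}$ and the same endgame (tight interlacing $\Rightarrow$ equitable).  Your method is arguably more direct and re-uses the walk-regularity information already computed in (\ref{ways}); the paper's method has the small advantage that it never needs the specific value of $\lambda_{x,y}$ and stays entirely inside the interlacing framework.
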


\begin{proof}
Consider $\mathfrak{g}_1$ in Figure \ref{fg4'size}, where $d_i$ is the order of quasi-clique $Q_{\mathfrak{h}}(K_i)$, for $i=1,2$.
\begin{figure}[H]
\center
  \includegraphics[scale=1]{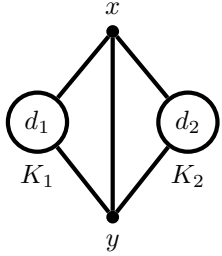}
  \vspace{0.1cm}
  \caption{$\mathfrak{g}_4$}
  \label{fg4'size}
\end{figure}
Then by definition of direct sum and Lemma \ref{commonneighborsg'} $(i)$, we obtain that $d_1-2+d_2-2+1=|N_G(x)|=4t+1$, that is, $d_1+d_2=4t+4$. By using Proposition \ref{quasicliqueorder} again, it is easy to see that $d_1=d_2=2t+2$.

%

Now we will show that the partition is equitable. Suppose that $\alpha$ is the average number of edges leading from a vertex in $V_3$ to vertices in $V_2$. Then the quotient matrix $\widetilde{B}$ of $A$ with respect to $\pi$ is:
\begin{eqnarray*}
\widetilde{B}=\left(
\begin{array}{ccc}
1  & d_1+d_2-4 & 0\\
2 & k-2-\frac{(|V(G)|-2-(d_1+d_2-4))\alpha}{d_1+d_2-4} & \frac{(|V(G)|-2-(d_1+d_2-4))\alpha}{d_1+d_2-4}\\
0 & \alpha & k-\alpha
\end{array}
\right),
\end{eqnarray*}
that is,
\begin{eqnarray}\label{g4partion}
\widetilde{B}=\left(
\begin{array}{ccc}
1  & 4t & 0\\
2 & 4t-1-\frac{\alpha t}{2} & \frac{\alpha t}{2}\\
0 & \alpha & k-\alpha
\end{array}
\right)
\end{eqnarray}
with eigenvalues $k,\theta_1$ and $\theta_2$, where $\theta_1+\theta_2=4t-\frac{\alpha t}{2}-\alpha,\theta_1\theta_2=-4t-\frac{\alpha t}{2}+\alpha-1$. From Lemma \ref{quotienteigenvalues2} $(i)$, the eigenvalues of (\ref{g4partion}) interlace the eigenvalues of $A$, that is, $-3\leq\theta_1,\theta_2\leq2t-1$, and we obtain the following inequalities:
\begin{equation}\label{g4ineq1}
(-3)^2-(4t-\frac{\alpha t}{2}-\alpha)(-3)-4t-\frac{\alpha t}{2}+\alpha-1\ge 0,
\end{equation}
\begin{equation}\label{g4ineq2}
(2t-1)^2-(4t-\frac{\alpha t}{2}-\alpha)(2t-1)-4t-\frac{\alpha t}{2}+\alpha-1\ge 0.
\end{equation}

Inequalities (\ref{g4ineq1}) and (\ref{g4ineq2}) are only satisfied for $\alpha=4$, and for this value of $\alpha$, they become equalities. This means that (\ref{g4partion}) becomes
\begin{eqnarray}\label{g4partion2}
\widetilde{B}=\left(
\begin{array}{ccc}
1  & 4t & 0\\
2 & 2t-1 & 2t\\
0 & 4 & 4t-3
\end{array}
\right)
\end{eqnarray}
with eigenvalues $k,2t-1$ and $-3$. So we have tight interlacing and Lemma \ref{quotienteigenvalues2} $(ii)$ implies that this is an equitable partition.
\end{proof}

\subsection{Determining the order of the quasi-cliques for $\mathfrak{g}_5$}

In this subsection, we will determine the order of the quasi-cliques corresponding to an indecomposable factor isomorphic to $\mathfrak{g}_5$. For the rest of this subsection, we will assume that $\mathfrak{g}_5$ is an indecomposable factor of $\mathfrak{h}$ and that $\mathfrak{g}_5$ is as in Figure \ref{fg5'size}, where the slim vertex $x$ has fat neighbors $I_1,~I_2$
and $I_3$. Let $Q_{\mathfrak{h}}(I_j)$ be the quasi-clique corresponding to the fat vertex $I_j$ and $b_j=|V(Q_{\mathfrak{h}}(I_j))|$ for $j=1,2,3$. Without loss of generality, we may assume that $b_1\geq b_2\geq b_3$.
\begin{figure}[H]
\center
  \includegraphics[scale=1]{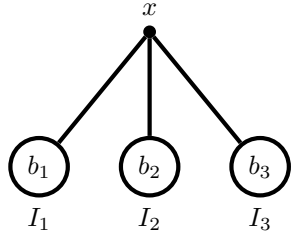}
  \vspace{0.1cm}
  \caption{$\mathfrak{g}_5$}
  \label{fg5'size}
\end{figure}

It is easy to see that $b_1-1+b_2-1+b_3-1=4t+1$, hence
\begin{equation}\label{g5firstcondition}
b_1+b_2+b_3=4t+4.
\end{equation}
Note that the above implies that there cannot be two quasi-cliques with order $2t+2$, so it follows that
\begin{equation}\label{g5secondcondition}
1\leq b_3\leq b_2\leq 2t+1.
\end{equation}

Let $e_G(x)$ be the number of edges in the subgraph of $G$ induced by the set of neighbors of $x$, $N_G(x)$. From (\ref{ways}) it follows:
\begin{equation}\label{g5'edges1}
e_G(x)=4t^2+2t=2\binom{2t+1}{2}.
\end{equation}
Now we give the following proposition to obtain bounds on $e_G(x)$ (of (\ref{g5'edges1})):

\begin{prp}\label{g5'prp1}
Let $1\leq i<j\leq 3$. Then any vertex $y~(y\neq x)$ in $Q_{\mathfrak{h}}(I_j)$ has at most $2$ neighbors in $V(Q_{\mathfrak{h}}(I_i))-\{x\}$.
\end{prp}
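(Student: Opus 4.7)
My plan is to argue by contradiction: suppose $y$ has three distinct neighbours $z_1,z_2,z_3$ in $V(Q_{\mathfrak{h}}(I_i))\setminus\{x\}$. The first preparatory step is to show that $N^s_{\mathfrak{h}}(I_i,I_j)=\{x\}$. By Lemma~\ref{commonneighborsg'}$(i)$ we have $|N^s_{\mathfrak{h}}(I_i,I_j)|\le 2$; if a second common slim neighbour existed, then the same lemma would force the factor containing $x$ to be isomorphic to $\mathfrak{g}_4$, contradicting the standing hypothesis that $x$ lies in an indecomposable factor isomorphic to $\mathfrak{g}_5$. Since $G$ is the slim graph of $\mathfrak{h}$, each edge $yz_r$ is realized by a common fat neighbour $F_r$ of $y$ and $z_r$; the identity $Q_{\mathfrak{h}}(I_i)\cap Q_{\mathfrak{h}}(I_j)=\{x\}$ then rules out $F_r\in\{I_i,I_j\}$. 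Thus each $F_r$ is a fat neighbour of $y$ distinct from $I_j$, and therefore lies in the indecomposable factor $\mathfrak{F}_y$ of $\mathfrak{h}$ containing $y$.

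I now split according to the type of $\mathfrak{F}_y$. If $\mathfrak{F}_y\cong\mathfrak{g}_3$ or $\mathfrak{F}_y\cong\mathfrak{g}_4$, then $y$ has exactly one fat neighbour $F^\star\ne I_j$, so all $F_r$ coincide with $F^\star$ and $\{z_1,z_2,z_3\}\subseteq N^s_{\mathfrak{h}}(F^\star,I_i)$, contradicting $|N^s_{\mathfrak{h}}(F^\star,I_i)|\le 2$. There remains the case $\mathfrak{F}_y\cong\mathfrak{g}_5$, in which $y$ has two further fat neighbours $J_1,J_2$. A short argument shows $J_k\notin\{I_1,I_2,I_3\}$: otherwise $J_k=I_l$ with $l\ne i,j$ would place both $x$ and $y$ in $N^s_{\mathfrak{h}}(I_j,I_l)$, which by Lemma~\ref{commonneighborsg'}$(i)$ forces a $\mathfrak{g}_4$ factor through $x$, contradicting $x$'s factor being $\mathfrak{g}_5$. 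In particular $x\not\sim J_k$ in $\mathfrak{h}$, so $x\notin Q_{\mathfrak{h}}(J_k)$.

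The crux of the argument, which I expect to be the main obstacle, is to prove $|N^s_{\mathfrak{h}}(J_k,I_i)|\le 1$ for $k=1,2$; granting this, each $z_r$ lies in $N^s_{\mathfrak{h}}(J_1,I_i)\cup N^s_{\mathfrak{h}}(J_2,I_i)$, a set of size at most $2$, contradicting $|\{z_1,z_2,z_3\}|=3$. Suppose instead that $|N^s_{\mathfrak{h}}(J_1,I_i)|=2$ with $N^s_{\mathfrak{h}}(J_1,I_i)=\{z_a,z_b\}$. By Lemma~\ref{commonneighborsg'}$(i)$ the indecomposable factor generated by $z_a,z_b,J_1,I_i$ is isomorphic to $\mathfrak{g}_4$, and hence the fat neighbours of $z_a$ in $\mathfrak{h}$ are exactly $J_1$ and $I_i$. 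An inclusion--exclusion count on $Q_{\mathfrak{h}}(J_1)\cup Q_{\mathfrak{h}}(I_i)$, using $Q_{\mathfrak{h}}(J_1)\cap Q_{\mathfrak{h}}(I_i)=\{z_a,z_b\}$, gives $|Q_{\mathfrak{h}}(J_1)|+b_i-3=4t+1$, i.e.\ $|Q_{\mathfrak{h}}(J_1)|+b_i=4t+4$. Since Proposition~\ref{quasicliqueorder} bounds both $|Q_{\mathfrak{h}}(J_1)|$ and $b_i$ by $2t+2$, equality forces $|Q_{\mathfrak{h}}(J_1)|=b_i=2t+2$. Thus $Q_{\mathfrak{h}}(J_1)$ is a maximum clique, and Lemma~\ref{HoffmanBound} requires $x$, which lies outside $Q_{\mathfrak{h}}(J_1)$, to have exactly two neighbours in $Q_{\mathfrak{h}}(J_1)$. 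But $x$ is adjacent to $y$ (through the common fat neighbour $I_j$) and to each of $z_a,z_b$ (through $I_i$), giving at least three neighbours of $x$ in $Q_{\mathfrak{h}}(J_1)$, a contradiction. This establishes the desired bound and completes the proof.
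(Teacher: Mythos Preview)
Your overall architecture matches the paper's: reduce to the key claim that $|N^s_{\mathfrak{h}}(J_k,I_i)|\le 1$ for each fat neighbour $J_k$ of $y$ other than $I_j$, and then conclude by pigeonhole. (Incidentally, your case analysis on $\mathfrak{F}_y\cong\mathfrak{g}_3$ or $\mathfrak{g}_4$ is superfluous: since $I_j$ is a fat vertex of $\mathfrak{F}_y$ and $b_j\le 2t+1$ by~\eqref{g5secondcondition}, Lemmas~\ref{g3'} and~\ref{g4'} force $\mathfrak{F}_y\cong\mathfrak{g}_5$ outright.)

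The genuine gap is the sentence ``Thus $Q_{\mathfrak{h}}(J_1)$ is a maximum clique''. You have correctly shown $|Q_{\mathfrak{h}}(J_1)|=2t+2$, but a quasi-clique of this order need \emph{not} be a clique: if some $\mathfrak{g}_3$ factor has $J_1$ as its middle fat vertex $F_2$, then the two slim vertices of that factor lie in $Q_{\mathfrak{h}}(J_1)$ and are nonadjacent (and Lemma~\ref{g3'} is perfectly consistent with $|Q_{\mathfrak{h}}(F_2)|=2t+2$). Nothing in your argument rules this out, so Lemma~\ref{HoffmanBound}, which applies only to genuine $(2t+2)$-cliques, cannot be invoked.

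The paper obtains the contradiction you are after without assuming $Q_{\mathfrak{h}}(J_1)$ is a clique: once the $\mathfrak{g}_4$ factor through $z_a,z_b,J_1,I_i$ is identified, Lemma~\ref{g4'} gives an \emph{equitable} partition $\{V_1,V_2,V_3\}$ with $V_1=\{z_a,z_b\}$ and $V_2=\bigl(V(Q_{\mathfrak{h}}(J_1))\cup V(Q_{\mathfrak{h}}(I_i))\bigr)\setminus V_1$, in which every vertex of $V_2$ has exactly $2t-1$ neighbours inside $V_2$. But $x\in V_2$ is adjacent to all of $V(Q_{\mathfrak{h}}(I_i))\setminus\{x,z_a,z_b\}$ (which already gives $2t-1$ neighbours) \emph{and} to $y\in V(Q_{\mathfrak{h}}(J_1))\setminus\{z_a,z_b\}\subseteq V_2$, yielding at least $2t$ neighbours in $V_2$. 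This is the same ``$x$ has one neighbour too many'' contradiction you were aiming for, obtained via the equitable partition rather than the clique bound. Replacing your appeal to Lemma~\ref{HoffmanBound} with this use of Lemma~\ref{g4'} repairs the proof.
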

\begin{proof}We show it for $i=1$ and $j=2$. The other cases follow in a similar way. Suppose $y$ is a vertex in $Q_{\mathfrak{h}}(I_2)$ and $y\neq x$. Since $b_2=|V(Q_\mathfrak{h}(I_2))|\leq 2t+1$, and the indecomposable factors $\mathfrak{g}_3$ and $\mathfrak{g}_4$ do not have quasi-clique with order at most $2t+1$ (Lemma \ref{g3'} and Lemma \ref{g4'}), the indecomposable factor containing $y$ as slim vertex is isomorphic to $\mathfrak{g}_5$. Now we need the following claim:

\begin{claim}\label{minCommonNeighbors2} For a fat vertex $F\in V^f_{\mathfrak{h}}(y)$, we have $|N^s_{\mathfrak{h}}(I_1,F)|\leq 1$.
\end{claim}
\begin{proof} Clearly, when $F$ is the fat vertex $I_2$, the result holds. Suppose $F$ is a fat neighbor distinct from $I_2$. By Lemma \ref{commonneighborsg'} $(i)$, we have $|N^s_{\mathfrak{h}}(I_1,F)|\leq 2$. Now assume that $|N^s_{\mathfrak{h}}(I_1,F)|=2$ and  $N^s_{\mathfrak{h}}(I_1,F)=\{x^\prime, y^\prime\}$. By Lemma \ref{commonneighborsg'} $(i)$, it follows that the Hoffman subgraph induced by the slim vertices $x^\prime$ and $y^\prime$ and the fat vertices $I_1$ and $F$ is isomorphic to the indecomposable factor $\mathfrak{g_4}$ and by Lemma \ref{g4'}, we have $b_1=|V(Q_\mathfrak{h}(I_1))|=|V(Q_\mathfrak{h}(F))|=2t+2$. As $x\not\in V(Q_{\mathfrak{h}}(F))$ and $y\not\in V(Q_{\mathfrak{h}}(I_1))$, we obtain that $\{x^\prime, y^\prime\}\bigcap\{x,y\}=\emptyset$. By using Lemma \ref{g4'} again, we obtain that the partition $\{V_1,V_2,V_3\}=\big\{\{x^\prime,y^\prime\}, V(Q_{\mathfrak{h}}(I_1))\bigcup V(Q_{\mathfrak{h}}(F))-\{x^\prime,y^\prime\},V(G)-V(Q_{\mathfrak{h}}(I_1))\bigcup V(Q_{\mathfrak{h}}(F))\big\}$ is equitable and $x$ has exactly $2t-1$ neighbors in $V_2$, since $x\in V(Q_{\mathfrak{h}}(I_1))-\{x^\prime,y^\prime\}\subseteq V_2$. But, on the other hand, $x$ has at least $|(V(Q_{\mathfrak{h}}(I_1))-\{x^\prime,y^\prime\}-\{x\})\bigcup\{y\}|=2t$ neighbors in $V_2$. This gives a contradiction.
\end{proof}

We can finish now the proof of Proposition \ref{g5'prp1}.

Note that $y$ is the slim vertex of an indecomposable factor isomorphic to $\mathfrak{g}_5$, see Figure \ref{fg5'size1},
\begin{figure}[H]
\center
  \includegraphics[scale=0.8]{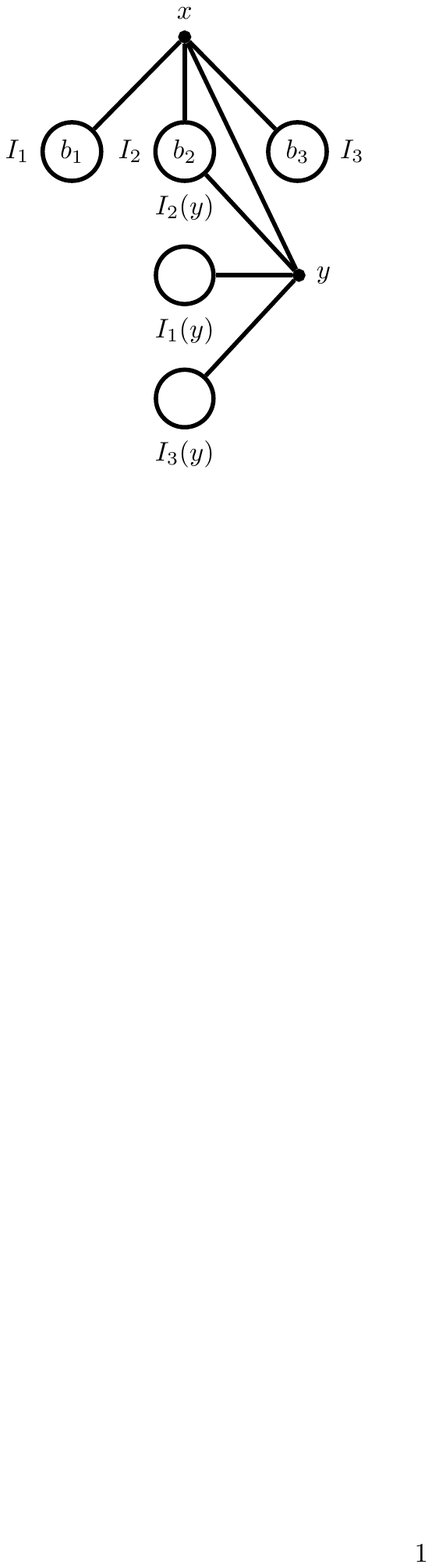}
  \vspace{0.1cm}
  \caption{}
  \label{fg5'size1}
\end{figure}
where $I_2(y)=I_2$.
Then from $N^s_{\mathfrak{h}}(I_1,I_2(y))=\{x\}$ (by Lemma \ref{commonneighborsg'} $(ii)$), $|N^s_{\mathfrak{h}}(I_1,I_1(y))|\leq 1$ and $|N^s_{\mathfrak{h}}(I_1,I_3(y))|\leq 1$, we find that $y$ has at most $2$ neighbors in $V(Q_{\mathfrak{h}}(I_1))-\{x\}$ and the result holds.
\end{proof}

From Proposition \ref{g5'prp1}, it follows
\begin{equation}\label{g5'edges2}
e_G(x)\leq \binom{b_1-1}{2}+\binom{b_2-1}{2}+\binom{b_3-1}{2}+ 2(b_2-1)+4(b_3-1).
\end{equation}

By using (\ref{g5'edges1}) and (\ref{g5'edges2}), we obtain
\begin{equation*}
\binom{b_1-1}{2}+\binom{b_2-1}{2}+\binom{b_3-1}{2}+ 2(b_2-1)+4(b_3-1)\geq 2\binom{2t+1}{2}.
\end{equation*}
This means
\begin{equation*}\begin{split}
2(2t+1)2t &\le (b_1-1)(b_1-2)+(b_2-1)(b_2-2)+(b_3-1)(b_3-2)+4(b_2-1)\\
                 &\quad+8(b_3-1)\\
              &= b_1^2-3b_1+b_2^2+b_3^2+4b_3+(b_2+b_3)-6\\
              &= b_1^2-3b_1+b_2^2+b_3^2+4b_3+(4t+4-b_1)-6\\
              &= (b_1-2)^2+b_2^2+(b_3+2)^2+4t-10,
\end{split}\end{equation*}
and we obtain $(b_1-2)^2+b_2^2+(b_3+2)^2\ge 8t^2+10$, where $1\leq b_3\leq b_2\leq b_1\leq 2t+2,$ and $b_3+b_2+b_1=4t+4.$

When $t>4$ holds, we find that $b_3\leq 2$ and there are three possible cases for the order of the quasi-cliques of $\mathfrak{g}_5$: $(b_1,b_2,b_3)=(2t+2,2t+1,1),$ $(b_1,b_2,b_3)=(2t+2,2t,2),$ or $(b_1,b_2,b_3)=(2t+1,2t+1,2).$

This shows the following lemma:
\begin{lema}\label{g5'}
Suppose that $\mathfrak{g}_5$ is an indecomposable factor of $\mathfrak{h}$ with fat vertices $I_1,I_2$ and $I_3$. For $i=1,2,3$, let $b_i$ be the order of the quasi-clique $Q_{\mathfrak{h}}(I_i)$ corresponding to the fat vertex $I_i$ in $\mathfrak{g}_5$ with $b_1\geq b_2\geq b_3$. If $t>4$, then one of the following holds:
\begin{enumerate}
\item [$(1)$] $(b_1,b_2,b_3)=(2t+2,2t+1,1);$

\item [$(2)$] $(b_1,b_2,b_3)=(2t+2,2t,2);$

\item [$(3)$] $(b_1,b_2,b_3)=(2t+1,2t+1,2).$
\end{enumerate}
\end{lema}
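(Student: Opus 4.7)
The plan is to reduce Lemma \ref{g5'} to a short finite enumeration, using the data already assembled in the paragraph preceding it. We have $b_1 + b_2 + b_3 = 4t+4$ by (\ref{g5firstcondition}); $1 \leq b_3 \leq b_2 \leq 2t+1$ by (\ref{g5secondcondition}); $b_1 \leq 2t+2$ by Proposition \ref{quasicliqueorder}; and the key inequality
\begin{equation*}
(b_1-2)^2 + b_2^2 + (b_3+2)^2 \;\geq\; 8t^2+10,
\end{equation*}
obtained from $e_G(x) = 4t^2 + 2t$ (see (\ref{g5'edges1})) together with Proposition \ref{g5'prp1}. What remains is to determine which integer triples satisfy all these constraints when $t > 4$.

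The heart of the argument is showing $b_3 \leq 2$ for $t > 4$. I would assume $b_3 \geq 3$ and derive a contradiction. For fixed $b_3$, the function $(b_1-2)^2 + b_2^2$ is convex in $b_1$ along the line $b_1+b_2 = 4t+4-b_3$, hence is maximized at a boundary of the feasible interval. In the regime $b_3 \leq t+1$ the upper boundary $b_1 = 2t+2$, $b_2 = 2t+2-b_3$ yields
\begin{equation*}
(b_1-2)^2 + b_2^2 + (b_3+2)^2 \;=\; 8t^2 + 8t + 8 - 4tb_3 + 2b_3^2,
\end{equation*}
which, being convex in $b_3$, attains its maximum on $\{3, \dots, t+1\}$ at one of the endpoints: $b_3 = 3$ gives $8t^2 - 4t + 26$, while $b_3 = t+1$ gives $6t^2 + 8t + 10$. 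Both are strictly less than $8t^2 + 10$ precisely when $t > 4$. In the complementary regime $b_3 \geq t+2$ the upper boundary on $b_1$ becomes $4t+4-2b_3$, forcing $b_2 = b_3$, and an analogous elementary calculation again produces a value below $8t^2+10$. Either way the squared-sum lower bound fails, giving the desired contradiction.

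Once $b_3 \in \{1,2\}$, the enumeration is immediate. For $b_3 = 1$ the equation $b_1+b_2 = 4t+3$ combined with $b_1 \leq 2t+2$ and $b_2 \leq 2t+1$ pins down $(b_1, b_2) = (2t+2, 2t+1)$, yielding case $(1)$. For $b_3 = 2$, $b_1+b_2 = 4t+2$ with $b_1 \geq b_2$, $b_1 \leq 2t+2$, $b_2 \leq 2t+1$ admits only $(2t+2, 2t)$ and $(2t+1, 2t+1)$, producing cases $(2)$ and $(3)$. A direct substitution confirms that the squared-sum inequality is satisfied in each of these three triples, so none of them is discarded and the proof is complete.

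The main obstacle is the two-regime maximization in the middle step; everything else is mechanical. The subtlety lies in handling the fact that the feasible $b_1$-interval shrinks from a $b_1 = 2t+2$ boundary to a $b_2 = b_3$ boundary as $b_3$ crosses $t+1$. Once the convexity-in-$b_3$ of the extremal value is noted, however, only a handful of endpoint evaluations remain, and the cutoff $t > 4$ in the hypothesis emerges directly as the threshold at which every candidate extremal value drops strictly below $8t^2+10$.
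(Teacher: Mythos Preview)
Your proposal is correct and follows the paper's approach exactly: both derive the constraints $b_1+b_2+b_3=4t+4$, $b_i\le 2t+2$, $b_2\le 2t+1$, and the key inequality $(b_1-2)^2+b_2^2+(b_3+2)^2\ge 8t^2+10$, and then enumerate. The paper simply asserts ``When $t>4$ holds, we find that $b_3\le 2$ and there are three possible cases\ldots'' without spelling out the optimisation, whereas you supply the convexity argument explicitly; so your write-up is in fact more complete than the paper's on this step.
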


\section{Finishing the proof of Theorem \ref{maintheorem1intro}}\label{proof}

In Figure \ref{type}, we summarize what we have shown until now. We give the possible indecomposable factors together with the order of their quasi-cliques under the condition $t>4$. We will refer to a slim vertex $x$ having Type $i$ ($i=1,2,3,4,5$) if the indecomposable factor which contains $x$ is of Type $i$.
\begin{figure}[H]
\ffigbox{
\begin{subfloatrow}
\CenterFloatBoxes
\ffigbox[1.2\FBwidth]{\caption*{Type $1$: $\mathfrak{h^1}$}}{\includegraphics[scale=0.8]{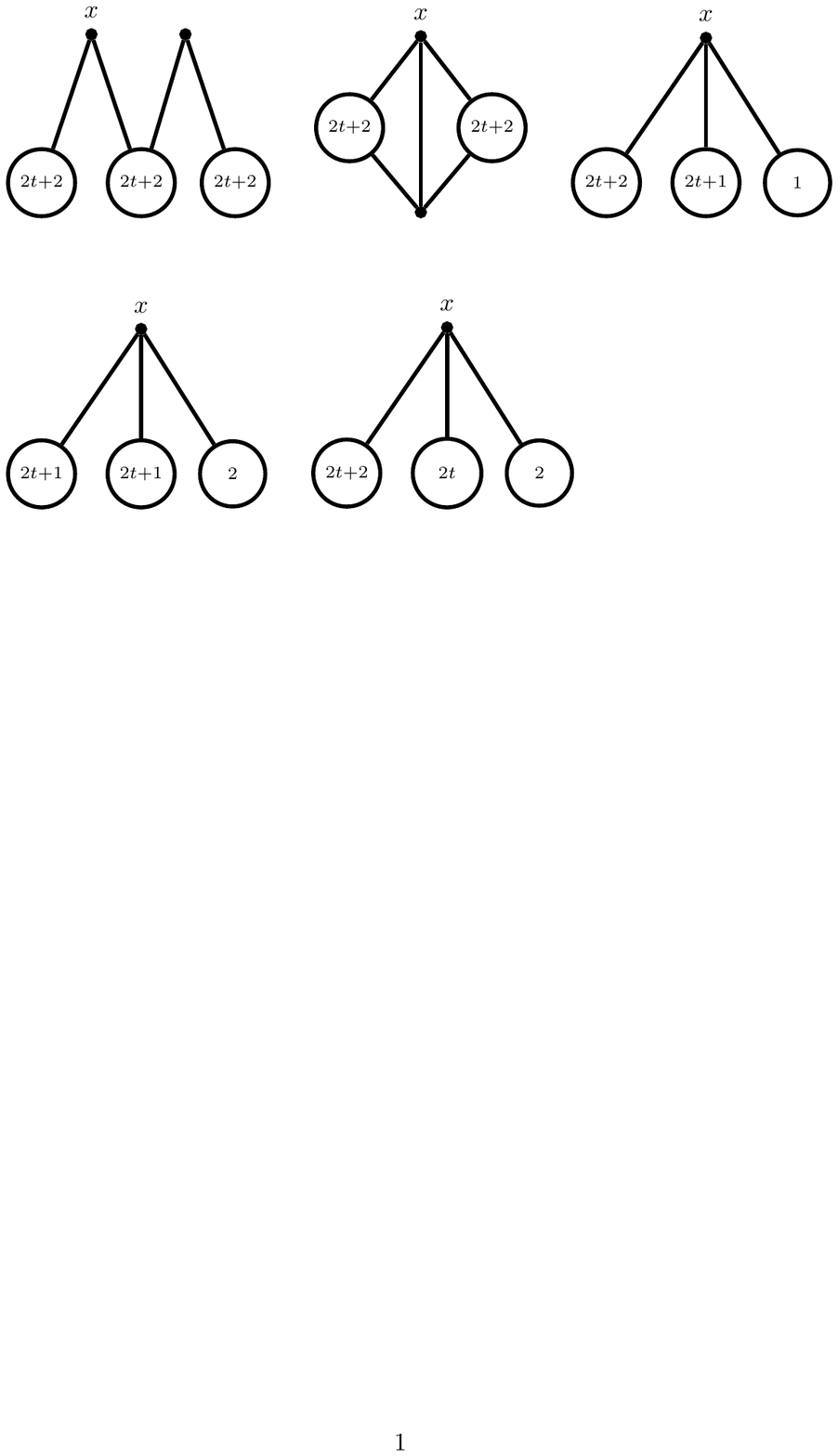}}
\ffigbox[1.2\FBwidth]{\caption*{Type $2$: $\mathfrak{h^2}$}}{\includegraphics[scale=0.8]{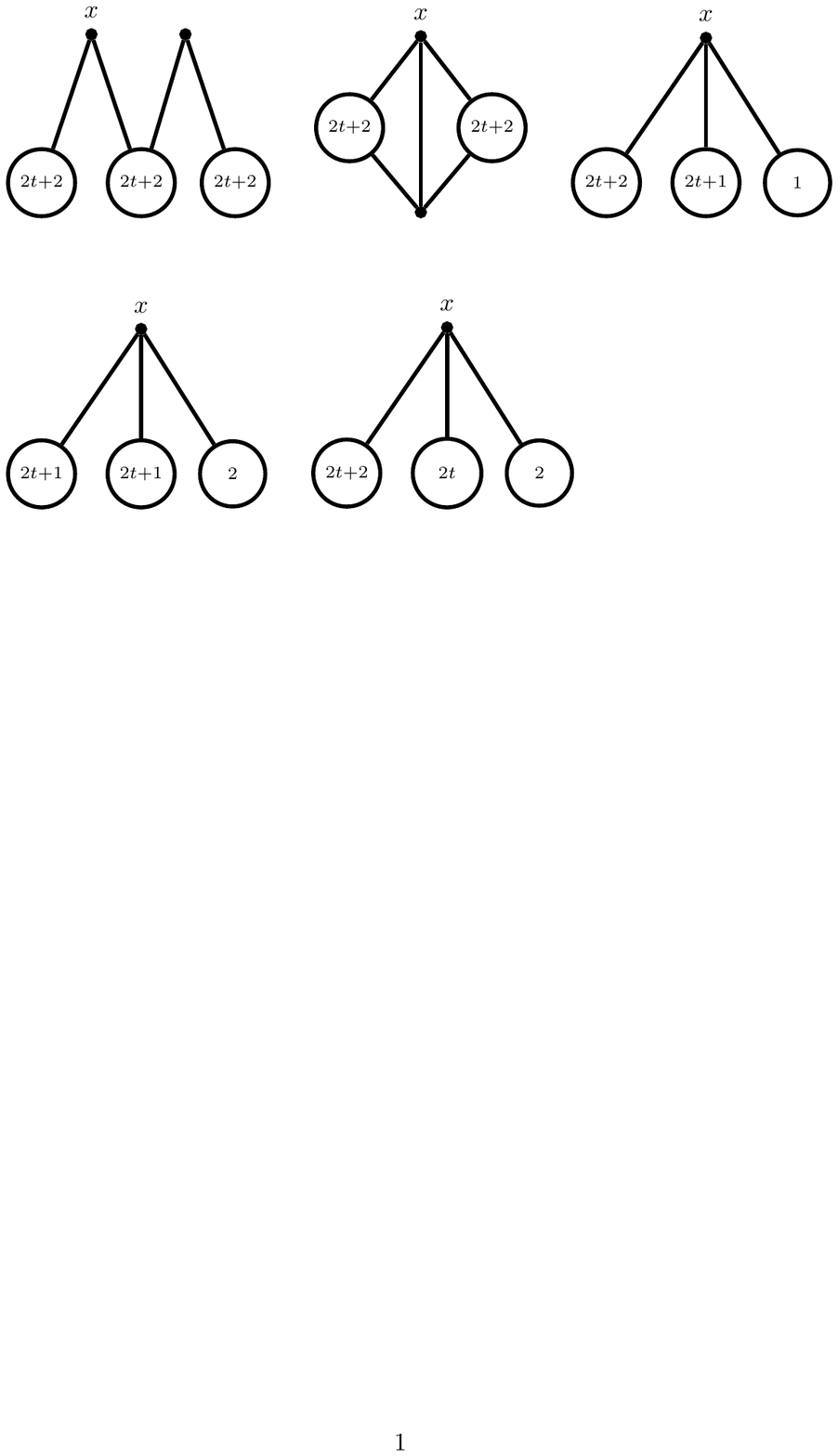}}
\ffigbox[1.2\FBwidth]{\caption*{Type $3$: $\mathfrak{h^3}$}}{\includegraphics[scale=0.8]{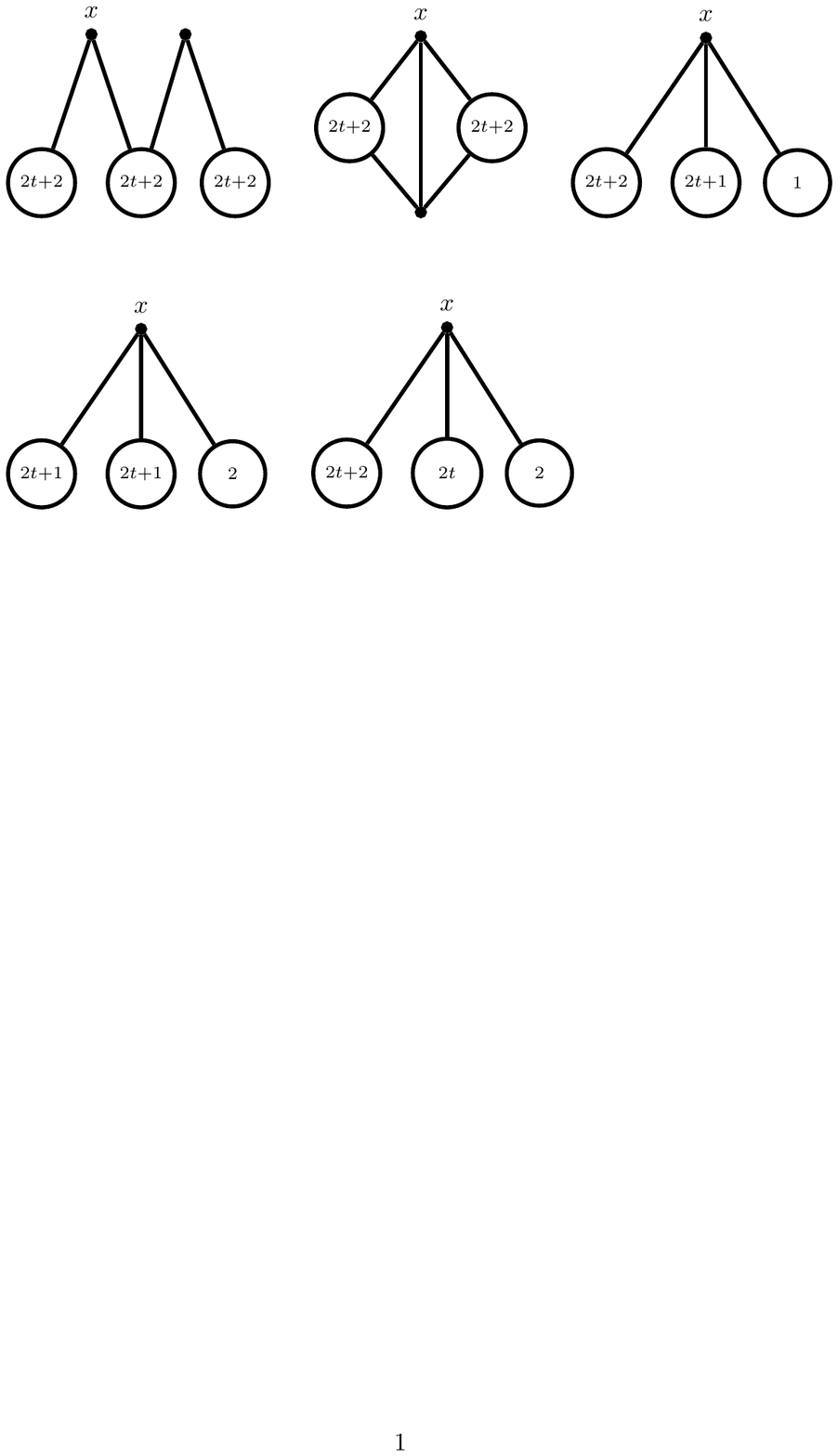}}
\end{subfloatrow}

\vspace{0.2cm}
\begin{subfloatrow}
\CenterFloatBoxes
\ffigbox[1.2\FBwidth]{\caption*{Type $4$: $\mathfrak{h^4}$}}{\includegraphics[scale=0.8]{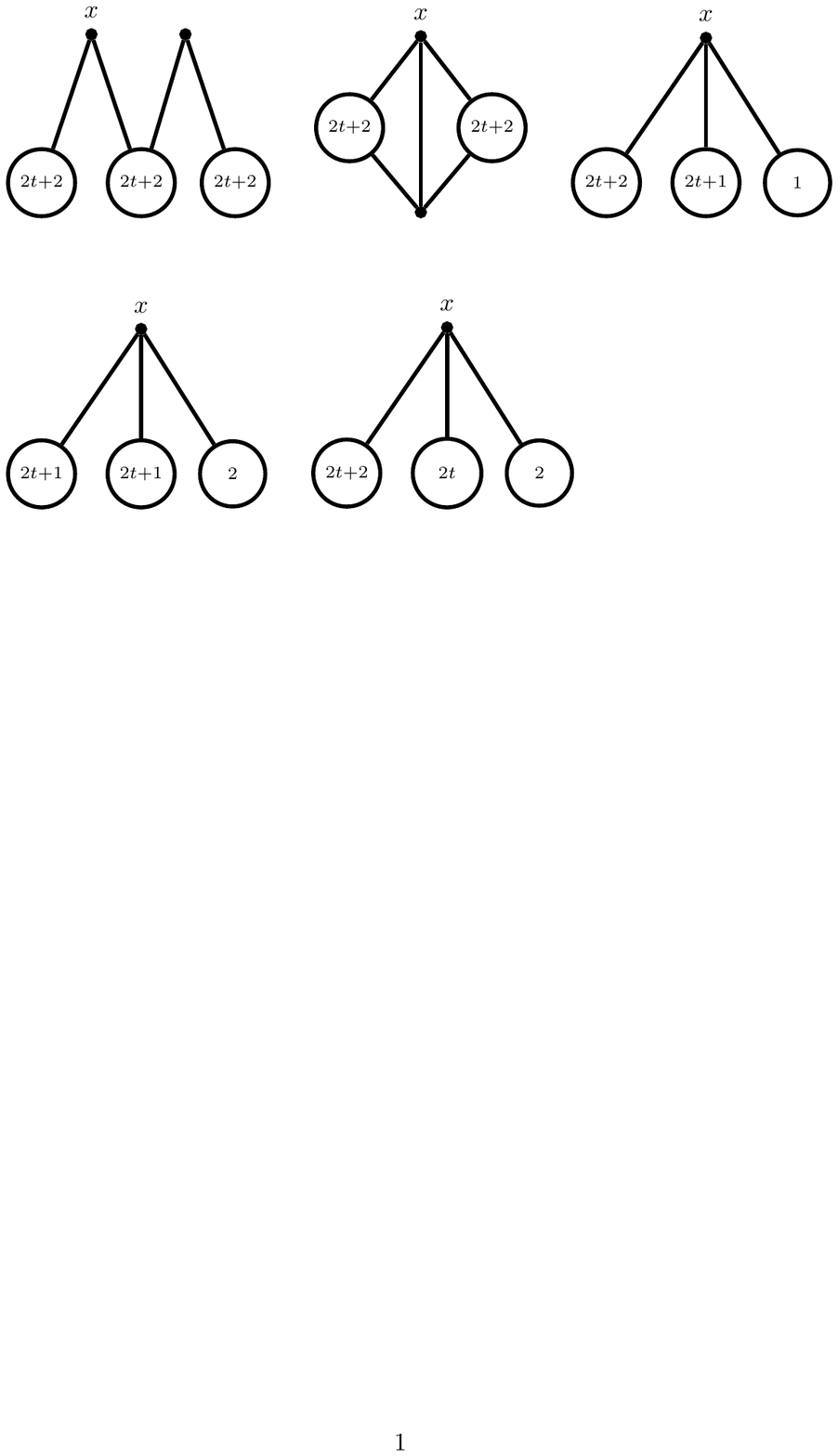}}
\ffigbox[1.2\FBwidth]{\caption*{Type $5$: $\mathfrak{h^5}$}}{\includegraphics[scale=0.8]{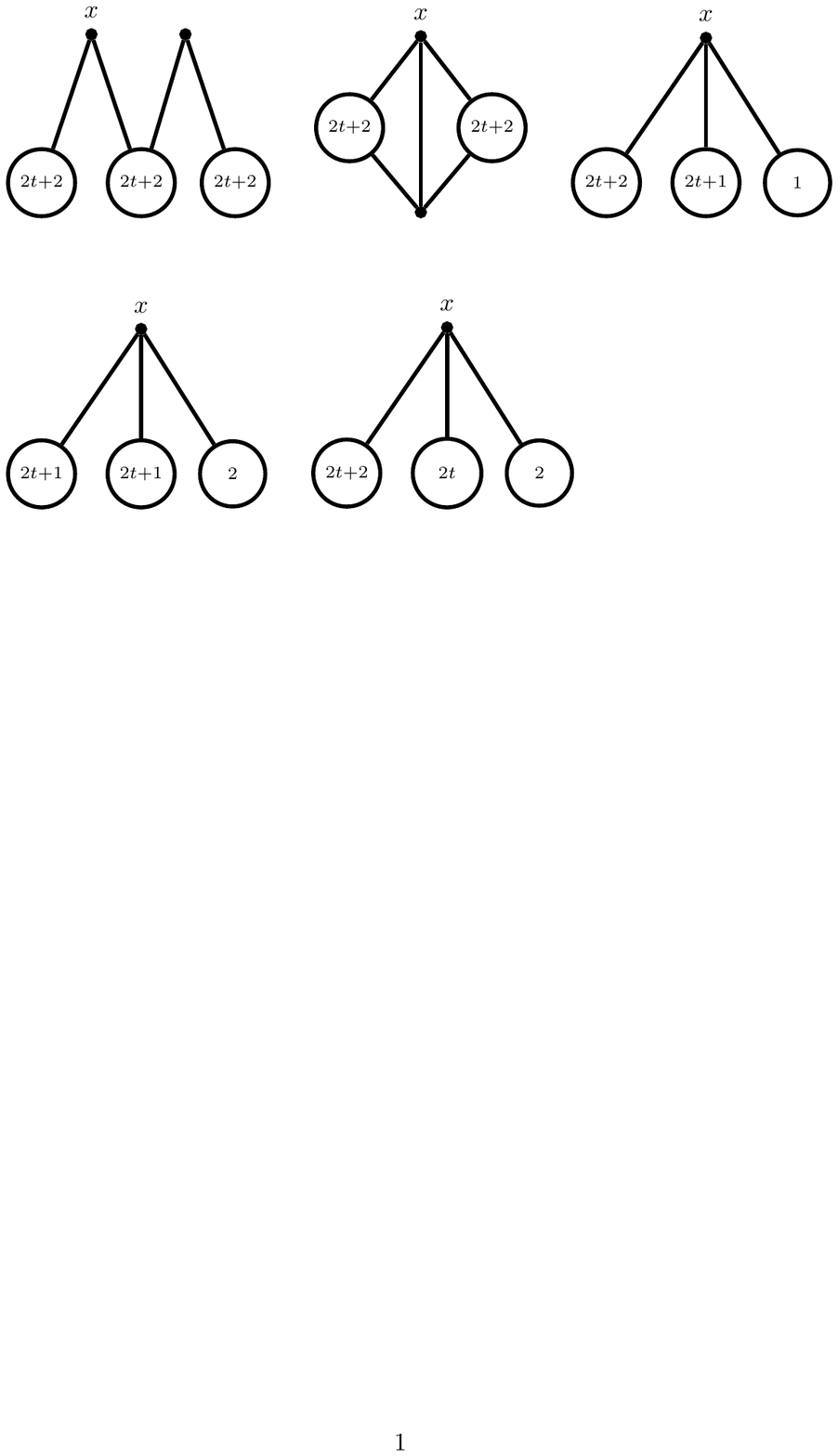}}
\end{subfloatrow}
}
{\caption{}\label{type}}
\end{figure}

Suppose that there are $n_i$ vertices of Type $i$ and $q_j$ quasi-cliques of order $j$ in $G$, where $i=1, 2, 3, 4, 5$ and $j=2t, 2t+1, 2t+2$. Consider the sets $W_i=\big\{(x,Q) \mid x\in V(Q)$, where $Q$ is a quasi-clique of order $2t-1+i$ corresponding to some fat vertex in $\mathfrak{h}\big\}$, $i=1,2,3$. Then, by double counting the cardinalities of the sets $W_1$, $W_2$ and $W_3$, we obtain

\begin{gather}
n_4=2tq_{2t},\label{w1}\\
n_3+2n_5=(2t+1)q_{2t+1},\label{w2}\\
2n_1+2n_2+n_3+n_4=(2t+2)q_{2t+2},\label{w3}\\
n_1+n_2+n_3+n_4+n_5=|V(G)|=2(t+1)^2.\label{v}
\end{gather}

From (\ref{w1}),(\ref{w2}),(\ref{w3}) and (\ref{v}), we obtain

\begin{equation}\label{eq1}
2tq_{2t}+(2t+1)q_{2t+1}+(2t+2)q_{2t+2}=(2t+2)^2,
\end{equation}

which implies
\begin{equation}\label{eq2}
-q_{2t}+q_{2t+2}\equiv 1\pmod{2t+1}.
\end{equation}

From (\ref{eq1}), it is easy to see that
\begin{equation}\label{q2t}
q_{2t+2}\leq 2t+2.
\end{equation}
From (\ref{w1}) and (\ref{w3}), it follows that $n_4=2tq_{2t}\leq (2t+2)q_{2t+2}$, hence $q_{2t}\leq \lfloor(1+1/t)q_{2t+2}\rfloor\leq q_{2t+2}+2$. This shows that the only possible solutions of (\ref{eq2}) are the following:

\begin{description}
\item[Case 1:] $q_{2t+2}=q_{2t}+2t+1+1$.

By (\ref{eq1}) and (\ref{q2t}), it follows that $q_{2t+2}=2t+2$ and $q_{2t}=q_{2t+1}=0$.\\

\item[Case 2:] $q_{2t+2}=q_{2t}+1$.

In this case (\ref{eq1}) becomes
$$2q_{2t}+q_{2t+1}=2t+2$$
So $q_{2t}\leq t+1$. If there exists a quasi-clique $Q$ with order $2t$, then every vertex in this quasi-clique belongs to Type $4$ and we can find a quasi-clique $Q^\prime$ with order $2t+2$ exactly sharing this vertex with $Q$ by Lemma \ref{commonneighborsg'} $(i)$. This means that $q_{2t+2}\geq |V(Q)|= 2t$. So $t+1\geq q_{2t}= q_{2t+2}-1\geq 2t-1$, but this is not possible. Hence $q_{2t}=0$, and this implies $q_{2t+2}=1$ and $q_{2t+1}=2t+2$.
\end{description}
Summarizing, we only have the following two cases:\\
{\bf Case 1}: $q_{2t}=0,~q_{2t+1}=0,~q_{2t+2}=2t+2$;\\
{\bf Case 2}: $q_{2t}=0,~q_{2t+1}=2t+2,~q_{2t+2}=1$.\\

Now we are going to determine the $n_i$'s for $i=1,2,3,4,5$. Observe that $q_{2t}=0$ holds for both cases, which implies that $n_4=0$ holds in both cases by using (\ref{w1}).

\begin{prp}\label{case1}
If $q_{2t}=q_{2t+1}=0,~q_{2t+2}=2t+2$ and $t>4$, then $G$ is the $2$-clique extension of the $(t+1)\times (t+1)$-grid.
\end{prp}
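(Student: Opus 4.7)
The plan is to combine a classification of vertex types in Case 1 with a clique-partition argument in the spirit of Lemma \ref{HoffmanBound}. First I would observe that since $q_{2t}=q_{2t+1}=0$, no vertex of $G$ can be of Types $3$, $4$, or $5$, each of which carries a quasi-clique of order at most $2t+1$. Hence every slim vertex must be of Type $1$ or Type $2$ and so has exactly two fat neighbors, both with quasi-cliques of order $2t+2$. Consequently $\mathfrak{h}$ has exactly $2t+2$ fat vertices, and each vertex of $G$ lies in precisely two quasi-cliques of order $2t+2$.

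The decisive step will be to rule out $\mathfrak{g}_3$ as an indecomposable factor, so that only $\mathfrak{g}_4$ survives. Assume for contradiction that a $\mathfrak{g}_3$ factor exists with slim vertices $x_1,x_2$ and fat vertices $F_1,F_2,F_3$. The degree identity $|N_G(x_1)|=(|Q(F_1)|-1)+(|Q(F_2)|-2)=4t+1$ forces $x_1\not\sim x_2$ in $G$, so the $2t$ vertices of $Q(F_2)\setminus\{x_1,x_2\}$ are all common neighbors of $x_1,x_2$, yielding $\mu_{x_1,x_2}=2t+|Q(F_1)\cap Q(F_3)|\in\{2t,2t+1,2t+2\}$. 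Because $2(t+1)^2>6t+4\geq|Q(F_1)\cup Q(F_2)\cup Q(F_3)|$ for $t>4$, I can choose a third vertex $w$ outside this union; then $w\not\sim x_1,x_2$, and Lemma \ref{commonneighborsg'} constrains $\mu_{x_1,w}$ and $\mu_{x_2,w}$ via the pairwise bounds $|Q(F_i)\cap Q(F_{wj})|\leq 2$. Imposing positive semidefiniteness on the $3\times 3$ principal submatrix of $M$ from Lemma \ref{disjointcn} on $\{x_1,x_2,w\}$ gives
\[
(2t+2)\bigl[(2t+2)^2-\mu_{x_1,x_2}^2-\mu_{x_1,w}^2-\mu_{x_2,w}^2\bigr]+2\mu_{x_1,x_2}\mu_{x_1,w}\mu_{x_2,w}\geq 0,
\]
and I expect that with a case analysis on the value of $\mu_{x_1,x_2}$ and a suitable choice of $w$ exploiting the direct-sum structure, this inequality will be violated for $t>4$, in the spirit of the determinant computations of Lemma \ref{g1'g2'}.

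Once every factor is $\mathfrak{g}_4$, each quasi-clique $Q(F)$ of order $2t+2$ is a genuine clique of $G$: any two slim vertices in $Q(F)$ are either $\mathfrak{g}_4$-twins (adjacent by construction) or come from different factors and are adjacent through their common fat neighbor $F$. Lemma \ref{HoffmanBound} then guarantees that every outside vertex has exactly two neighbors in the clique, and in the absence of Type $1$ vertices any two distinct cliques intersect in either $0$ or $2$ vertices. Fix a clique $C$ and let $C_1,\ldots,C_{t+1}$ be the cliques sharing two vertices with $C$. A triangle count analogous to that of Lemma \ref{g4'}, applied between a twin pair in $C\cap C_i$ and a hypothetical shared vertex in $C_i\cap C_j$, forces $C_i\cap C_j=\emptyset$ for $i\neq j$. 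Hence $\{C_1,\ldots,C_{t+1}\}$ partitions $V(G)$ into cliques of size $2t+2$ (the \emph{rows}), and running the same argument starting from $C_1$ produces the complementary partition into \emph{columns} (consisting of $C$ and the remaining $t$ cliques). Each row and column meet in exactly two vertices, so $G$ has the incidence structure of the $2$-clique extension of the $(t+1)\times(t+1)$-grid. The principal obstacle is the middle step: the bound $\mu\leq 2t+2$ from Lemma \ref{disjointcn} is tight when $|Q(F_1)\cap Q(F_3)|=2$, so a contradiction cannot come from the pair $(x_1,x_2)$ alone, and the auxiliary vertex $w$ must be chosen with care to pin down $\mu_{x_1,w}$ and $\mu_{x_2,w}$ using the rigid direct-sum structure of $\mathfrak{h}$.
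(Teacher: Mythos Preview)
Your opening reduction is right and matches the paper: with $q_{2t}=q_{2t+1}=0$ no vertex can be of Type $3$, $4$, or $5$, so every vertex is of Type $1$ or $2$.

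The genuine gap is in the ``decisive step'' where you try to exclude $\mathfrak{g}_3$ via a $3\times 3$ principal minor of $M$. The auxiliary vertex $w$ lies outside $Q(F_1)\cup Q(F_2)\cup Q(F_3)$ and has only two fat neighbors, neither of which is $F_1,F_2,F_3$; by Lemma~\ref{commonneighborsg'} each pair $(F_i,F_{wj})$ shares at most two slim neighbors, so $\mu_{x_1,w}$ and $\mu_{x_2,w}$ are bounded by an absolute constant independent of $t$. Plugging $a=\mu_{x_1,x_2}\in\{2t,2t+1\}$ and $b,c=O(1)$ into your determinant
\[
(2t+2)\bigl[(2t+2)^2-a^2-b^2-c^2\bigr]+2abc
\]
gives a quantity that is asymptotically $(2t+2)\bigl((2t+2)^2-a^2\bigr)\sim 16t^2>0$, so no choice of $w$ produces a negative determinant. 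Even in the extreme case $a=2t+2$ the determinant equals $-(2t+2)(b-c)^2$, which is merely $\le 0$ and vanishes whenever $b=c$; you have no mechanism to force $b\neq c$. Thus the argument ``in the spirit of Lemma~\ref{g1'g2'}'' does not go through here: in that lemma all three pairwise $\mu$'s (or $\lambda$'s) were of order $t$, which is what made the cubic negative.

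The paper avoids this obstacle altogether. It exploits walk-regularity: $A^3_{(x,x)}=8t^2+4t$ is constant over all $x$ by~(\ref{ways}). If $x$ were of Type~$1$ with nonneighbor $x'$ in $Q(F_2)$, then $x'$ has no neighbor in $Q(F_1)$, so by Lemma~\ref{HoffmanBound} $Q(F_1)$ cannot be a $(2t+2)$-clique; this costs at least one triangle through $x$, and together with the fact that each $y\sim x$ in $Q(F_2)$ contributes at most one extra neighbor in $Q(F_1)\setminus\{x\}$, a direct triangle count gives $A^3_{(x,x)}\le 8t^2+4t-2$, a contradiction. This use of the diagonal of $A^3$ is the key idea you are missing.

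Your endgame also diverges from the paper. Once only Type~$2$ survives, the paper defines the obvious equivalence relation pairing each vertex with its $\mathfrak{g}_4$-twin, passes to the quotient graph $\underline{G}$, reads off that $\underline{G}$ is strongly regular with parameters $\big((t+1)^2,2t,t-1,2\big)$, and invokes Shrikhande's theorem. Your direct row/column reconstruction is plausible, but the assertion that $C_i\cap C_j=\emptyset$ for $i\neq j$ is not justified by ``a triangle count analogous to Lemma~\ref{g4'}''; Lemma~\ref{g4'} is a tight-interlacing argument, not a triangle count, and you would still need to rule out two cliques meeting in a $\mathfrak{g}_4$-pair disjoint from $C$. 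Shrikhande's theorem sidesteps all of this.
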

\begin{proof}Since $q_{2t+1}=0$, we find $n_3=n_5=0$ from (\ref{w2}). Hence all vertices of $G$ are of Type $1$ or Type $2$ and every vertex of $G$ has exactly two fat neighbors. We want to show that $n_1=0$. Suppose this is not the case. Then there exists a vertex $x$ belonging to Type $1$ and the Hoffman graph shown in Figure \ref{fmatheorem1} is an indecomposable factor of $\mathfrak{h}$, where $x,x^\prime\in N_\mathfrak{h}^s(F_2)$ and $x\not\sim x^\prime$.
\begin{figure}[H]
\center
  \includegraphics[scale=1]{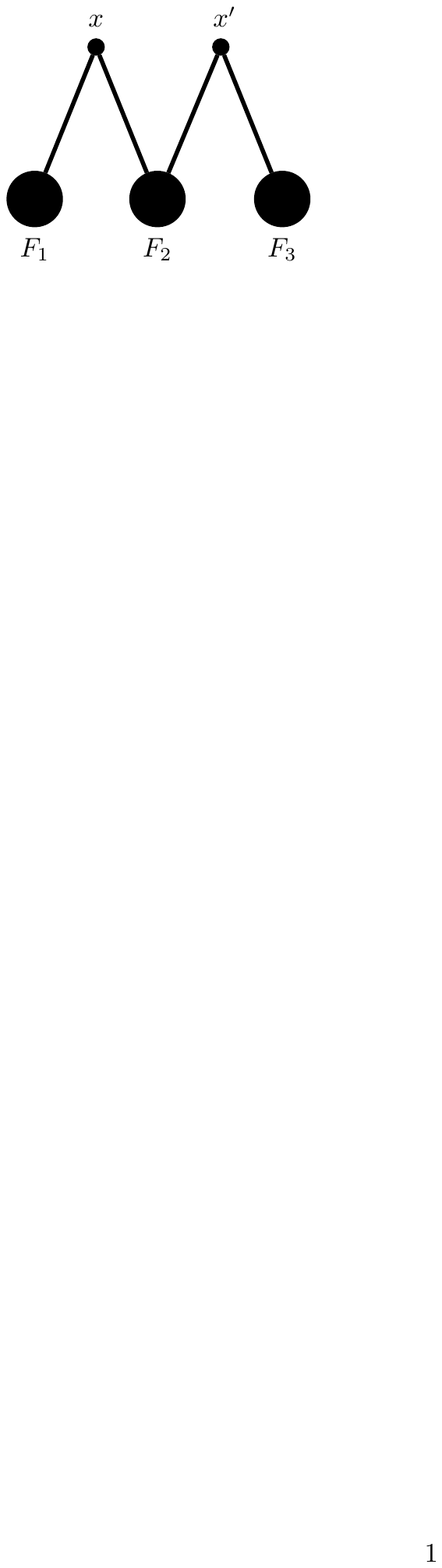}
  \vspace{0.1cm}
  \caption{}
  \label{fmatheorem1}
\end{figure}

In a similar way as in Claim \ref{minCommonNeighbors2}, we can show that, for any neighbor $y$ of $x$ in the quasi-clique $Q_{\mathfrak{h}}(F_2)$ and for any fat vertex $F\in V^f_{\mathfrak{h}}(y)$, it follows that $|N^s_{\mathfrak{h}}(F_1,F)|\leq1$. Observing that $y$ has only one fat neighbor besides the fat vertex $F_2$, this implies that $y$ has at most one neighbor in $Q_{\mathfrak{h}}(F_1)$ besides $x$. Suppose that $a_1=|V(Q_{\mathfrak{h}}(F_1))|,~a_2=|V(Q_{\mathfrak{h}}(F_2))|$. Since $x^\prime$ has no neighbor in the quasi-clique $Q_{\mathfrak{h}}(F_1)$, it implies that $Q_{\mathfrak{h}}(F_1)$ cannot be a clique by Lemma \ref{HoffmanBound}. Therefore, the subgraph of $G$ induced by $V(Q_{\mathfrak{h}}(F_1))-\{x\}$ is not a clique. By counting the number of triangles throught $x$ we obtain
\begin{equation*}\begin{split}
A_{(x,x)}^3&\leq2\left(\binom{a_1-1}{2}-1\right)+2\binom{a_2-2}{2}+2(a_2-2)\\
           &\leq2\left(\binom{2t+1}{2}-1\right)+2\binom{2t}{2}+2\cdot2t\\
           &=8t^2+4t-2.
\end{split}
\end{equation*}
But, as $G$ has the same spectrum as the $2$-clique extension of the $(t+1)\times (t+1)$-grid, we obtain that $A_{(x,x)}^3=8t^2+4t$ by (\ref{ways}). This gives a contradiction. Hence, we just showed that all the vertices of $G$ are of Type $2$.\\

Now, consider the following equivalence relation $\mathcal{R}$ on the vertex set $V(G)$:

$$ x\mathcal{R}x^\prime{\text{ if and only if }}\{x\}\cup N(x)=\{x^\prime\}\cup N(x^\prime), {\text{ where }} x,x^\prime\in V(G).$$
It means that for each vertex $x$, there exists an unique distinct vertex $x^\prime$ such that $x\mathcal{R}x^\prime$ and $x^\prime\sim x$. So two vertices in the same equivalent class induce a $2$-clique. Let us define a graph $\underline{G}$ whose vertices are the equivalent classes, and such that two classes $\{x,x^\prime\}$ and $\{y,y^\prime\}$ are adjacent in $\underline{G}$ if and only if $x\sim y,x^\prime\sim y,x\sim y^\prime,x^\prime\sim y^\prime.$ Then $\underline{G}$ is a regular graph with valency $2t$, and $G$ is the $2$-clique extension of $\underline{G}$. Note that the spectrum of $\underline{G}$ follows immediately from (\ref{spectrum1}) and (\ref{spectrum2}) and is equal to
$$\big\{(2t)^{1}, (t-1)^{2t}, (-2)^{t^{2}}\big\}.$$

Since $\underline{G}$ is a connected regular graph with valency $2t$ with multiplicity $1$, and since it has exactly three distinct eigenvalues, it follows that $\underline{G}$ is a strongly regular graph with parameters $\big((t+1)^2, 2t, t-1, 2\big)$. From \cite{Sh1959}, it follows that if $t\neq 3$, then the graph with these parameters is unique and is the $(t+1)\times(t+1)$-grid. So we obtained that $G$ is the $2$-clique extension of the $(t+1)\times(t+1)$-grid when $t>4$.
\end{proof}

Now let us assume that we are in {\bf Case 2}, that is $q_{2t}=0,~q_{2t+1}=2t+2$, and $q_{2t+2}=1$. We have already seen that $n_4=0$. We will show that this case is impossible. But to show this, we will need a few lemmas.

As a vertex of Type $1$ or Type $2$ lies in two distinct quasi-cliques of order $2t+2$ and $q_{2t+2}=1$, we find that there are no vertices of Type $1$ or Type $2$. So we obtain $n_1=n_2=0$. This implies $n_3=2t+2$ and $n_5=2t(t+1)$ by (\ref{w3}) and (\ref{v}). As $n_1=0$, all quasi-cliques are actually cliques since every vertex is adjacent to all of the vertices in the same quasi-clique except itself.

Let $Q$ be the unique quasi-clique of order $2t+2$ and let $\mathcal{L}=\{Q^\prime\mid Q^\prime$ is a quasi-clique of order $2t+1\}$. We already noticed that $Q$ and $Q^\prime\in\mathcal{L}$ are actually cliques. Now we will show the following lemma:
\begin{lema}\label{sum1}
\item[($i$)] For every vertex $x$ in $Q$, there exists an unique quasi-clique $Q^\prime_x\in\mathcal{L}$ such that $x\in V(Q^\prime_x)$;

\item[($ii$)] For distinct vertices $x_1$ and $x_2$ in $Q$, the quasi-cliques $Q^\prime_{x_1}$ and $Q^\prime_{x_2}$ are distinct;

\item[($iii$)] For every quasi-clique $Q^\prime\in\mathcal{L}$, there exists an unique vertex $x_{Q^\prime}$ such that $x_{Q^\prime}\in V(Q)$;

\item[($iv$)] For distinct quasi-cliques $Q_1^\prime$ and $Q_2^\prime$ in $\mathcal{L}$, the vertices $x_{Q_1^\prime}$ and $x_{Q_2^\prime}$ are distinct.
\end{lema}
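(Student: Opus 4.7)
The plan is to first identify $V(Q)$ with the set of Type $3$ vertices, then establish (i) and (ii) from the structural description of Type $3$, and finally deduce (iii) and (iv) by a cardinality/bijection argument.

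Looking at Figure~\ref{type}, a slim vertex lies in a quasi-clique of order $2t+2$ precisely when its indecomposable factor has a fat vertex whose associated quasi-clique has that order. Among the five types, Types $1$ and $2$ would force a vertex to lie in two quasi-cliques of order $2t+2$, which is impossible since $q_{2t+2}=1$; Type $4$ would force it to lie in a quasi-clique of order $2t$, impossible since $q_{2t}=0$; and Type $5$ has no fat neighbor with quasi-clique of order $2t+2$ at all. Hence every vertex of $V(Q)$ has Type $3$, and since $|V(Q)|=2t+2=n_3$, the set $V(Q)$ coincides with the set of Type $3$ vertices. For part (i), each Type $3$ vertex $x$ has three fat neighbors whose quasi-cliques have orders $(2t+2,2t+1,1)$; the order-$(2t+2)$ one must be $F_Q$, and the order-$(2t+1)$ one uniquely determines $Q'_x\in\mathcal{L}$.

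For part (ii), suppose for contradiction that $x_1\ne x_2$ in $V(Q)$ satisfy $Q'_{x_1}=Q'_{x_2}=:Q'$. Letting $F_Q$ and $F_{Q'}$ denote the fat vertices associated to $Q$ and $Q'$ respectively (necessarily distinct, since $|V(Q)|\ne|V(Q')|$), both $x_1$ and $x_2$ are common slim neighbors of $F_Q$ and $F_{Q'}$ in $\mathfrak{h}$. By Lemma~\ref{commonneighborsg'}(i), the indecomposable factor containing $\{x_1,x_2,F_Q,F_{Q'}\}$ must be isomorphic to $\mathfrak{g}_4$; but such a factor would produce Type $2$ vertices, contradicting $n_2=0$, or equivalently (via Lemma~\ref{g4'}) would force $|V(Q')|=2t+2\ne 2t+1$.

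Finally, (iii) and (iv) follow from (i) and (ii) by pigeonhole. The map $\phi\colon V(Q)\to\mathcal{L}$ defined by $\phi(x)=Q'_x$ is well-defined by (i) and injective by (ii); since $|V(Q)|=2t+2=|\mathcal{L}|$, $\phi$ is a bijection. For each $Q'\in\mathcal{L}$, setting $x_{Q'}:=\phi^{-1}(Q')$ yields the unique vertex of $V(Q)$ lying in $V(Q')$, proving (iii); distinct $Q'_1,Q'_2\in\mathcal{L}$ have distinct preimages under $\phi$, proving (iv). The main subtlety is step (ii), where it is essential that $\mathfrak{g}_4$ has already been ruled out as an indecomposable factor in Case $2$ (via $n_2=0$); the rest is bookkeeping with the type list plus a short bijection argument.
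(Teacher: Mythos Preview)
Your proof is correct and follows essentially the same route as the paper: identify $V(Q)$ with the Type~$3$ vertices to get (i), use Lemma~\ref{commonneighborsg'} to bound $|V(Q)\cap V(Q')|\le 1$ for (ii) (the paper cites part (ii) rather than part (i), but the content is the same once $n_2=0$ is known), and then deduce (iii)--(iv) from the cardinality match $|V(Q)|=|\mathcal{L}|=2t+2$. Your preamble re-deriving $n_1=n_2=n_4=0$ is slightly redundant, since the paper has already established this just before stating the lemma, but it does no harm.
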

\begin{proof}
$(i)$ It follows from before the fact that, for all $x\in V(Q)$, $x$ is of Type $3$.
\vspace{0.2cm}

$(ii)$ By Lemma \ref{commonneighborsg'} $(ii)$, we have $|V(Q^\prime)\bigcap V(Q)|\leq 1$ for any $Q^\prime\in\mathcal{L}$. If $Q^\prime_{x_1}$ and $Q^\prime_{x_2}$ are the same, then $Q^\prime_{x_1}$ shares two common vertices with $Q$, it is not possible. So the result follows.
\vspace{0.2cm}

$(iii)$ Since $|\mathcal{L}|=q_{2t+1}=2t+2$ and $|V(Q)|=2t+2$, it follows from $(i)$ and $(ii)$.

$(iv)$ It follows from $(i)$-$(iii)$.
\end{proof}

Let $W=V(G)-V(Q)$ and let $G^\prime$ be the induced subgraph of $G$ on $W$. Let $G^{\prime\prime}$ be the spanning subgraph of $G^\prime$ such that the vertices $w_1,~w_2$ are adjacent in $G^{\prime\prime}$ if there exists a quasi-clique $Q^\prime\in\mathcal{L}$ such that $w_1$ and $w_2$ are in $Q^\prime$. Now we have the following lemma:

\begin{lema}\label{ismorphicgraph}
The graph $G^{\prime\prime}$ is the line graph of the \emph{cocktail-party graph} $CP(2t+2)$.
\end{lema}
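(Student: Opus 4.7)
The plan is to realize $G''$ as the line graph of an auxiliary graph $H$ whose vertex set is $\mathcal{L}$, and then to identify $H$ with $CP(2t+2)$ by a degree count.

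First, I would pin down the incidence structure between $W$ and $\mathcal{L}$. Since we are in Case $2$ and $n_1=n_2=n_4=0$, every vertex of $W$ is of Type $5$; the double counting identity $n_3+2n_5=(2t+1)q_{2t+1}$ then shows that each $w\in W$ lies in exactly two quasi-cliques of $\mathcal{L}$. Conversely, each $Q'\in\mathcal{L}$ meets $W$ in exactly $2t$ vertices, since $|Q'|=2t+1$ and $|Q'\cap V(Q)|=1$ by Lemma \ref{sum1}. Moreover, two distinct quasi-cliques $Q_1',Q_2'\in\mathcal{L}$ share at most one vertex of $W$: by Lemma \ref{commonneighborsg'}(i) two common slim neighbors would force $\mathfrak{g}_4$ to be an indecomposable factor, and then Lemma \ref{g4'} would give $|Q_1'|=|Q_2'|=2t+2$, contradicting $|Q_i'|=2t+1$.

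Second, I would define $H$ to be the simple graph on vertex set $\mathcal{L}$ with $Q_1'\sim_H Q_2'$ iff $Q_1'\cap Q_2'\cap W\neq\emptyset$. Each $Q'\in\mathcal{L}$ sees $2t$ distinct $Q''\in\mathcal{L}$ via its $2t$ vertices in $W$ (distinctness uses the preceding paragraph), so $H$ is $2t$-regular on $2t+2$ vertices. Therefore $\bar H$ is $1$-regular, hence a perfect matching, so $H\cong CP(2t+2)$.

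Finally, the map $\phi:W\to E(H)$ sending $w$ to the pair of quasi-cliques in $\mathcal{L}$ containing it is well-defined by Step $1$ and injective because any two preimages of the same edge would give two distinct elements in $Q_1'\cap Q_2'\cap W$, contradicting Step $1$. Since $|W|=2t(t+1)=|E(CP(2t+2))|$, $\phi$ is a bijection. Two vertices $w_1,w_2\in W$ are adjacent in $G''$ iff they lie in a common $Q'\in\mathcal{L}$, iff $\phi(w_1)$ and $\phi(w_2)$ share an endpoint, iff they are adjacent in $L(H)$. Hence $G''\cong L(H)=L(CP(2t+2))$. The main technical point is the uniform degree count establishing $2t$-regularity of $H$; once that is in hand, the identification with the cocktail-party graph and the line-graph conclusion are automatic.
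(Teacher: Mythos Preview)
Your proof is correct and follows essentially the same route as the paper: define the auxiliary graph $H$ on $\mathcal{L}$, show it is $2t$-regular on $2t+2$ vertices (hence the complement is a perfect matching and $H\cong CP(2t+2)$), and identify $G''$ with $L(H)$ via the map sending $w\in W$ to the pair of quasi-cliques containing it. You supply more detail than the paper does---in particular the explicit reason why two members of $\mathcal{L}$ meet in at most one point of $W$, and the explicit bijection $\phi$---but the underlying argument is the same. One minor remark: once you have established that every $w\in W$ is of Type~$5$, the fact that $w$ lies in exactly two quasi-cliques of $\mathcal{L}$ follows directly from the shape $(b_1,b_2,b_3)=(2t+1,2t+1,2)$ of Type~$5$; invoking the global double-counting identity $n_3+2n_5=(2t+1)q_{2t+1}$ is unnecessary (and on its own would not give a per-vertex conclusion).
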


\begin{proof}
Define the graph $H$ with vertex set $\mathcal{L}$ and two quasi-clique $Q_1^\prime,Q_2^\prime\in\mathcal{L}$ are adjacent if they intersect in a unique element. It is easy to see that the graph $G^{\prime\prime}$ is the line graph of $H$. As any quasi-clique $Q^\prime$ of $\mathcal{L}$ has $2t$ vertices in $W$ and any vertex in $W$ lies in two quasi-cliques in $\mathcal{L}$, it follows that $H$ is $2t$-regular. So $H$ is the cocktail-party graph $CP(2t+2)$ as it has $2t+2$ vertices. Hence, the lemma holds.
\end{proof}

Let $\Omega=\{x_1, \ldots,x_{t+1},x_1^\prime, x_2^\prime, \ldots, x_{t+1}^\prime\}$, and let $\Omega^2=\{2\text{-subsets of }\Omega\}-\bigcup_{i=1}^{t+1}\{x_i,x_i^\prime\}$. (For convenience, we will use $x_ix_j$ to represent the subset $\{x_i,x_j\}$, and similarly for the other $2$-subsets in $\Omega^2$.) We define the graph $G^0$ with vertex set $\Omega\bigcup\Omega^2$ and three kinds of edges as follows:
\begin{enumerate}
\item[$(1)$] the edges of the form $\{x,y\}$, where $x,y\in\Omega$;

\item[$(2)$] the edges of the form $\{x,xy\}$, where $x\in\Omega,xy\in\Omega^2$;

\item[$(3)$] the edges of the form $\{xy,xz\}$, where $xy,xz\in\Omega^2$.
\end{enumerate}
By Lemma \ref{ismorphicgraph}, Lemma \ref{sum1} and the definition of $Q$, we see that $G^0$ is isomorphic to a spanning subgraph of $G$, and hence we can identify $V(G)$ with $\Omega\bigcup\Omega^2$.

Now consider the partition $\pi=\{V_1,V_2,V_3,V_4\}$ of $V(G)$, where
\begin{gather*}
V_1=\{x_1,x_1^\prime\},\\
V_2=\{x_i,x_i^\prime: 2\leq i\leq t+1\},\\
V_3=\{x_1x_i,x_1x_i^\prime,x_1^{\prime}x_i, x_1^\prime x_i^\prime:2\leq i\leq t+1\},\\
V_4=\{x_ix_j,x_ix_j^\prime,x_i^\prime x_j,x_i^\prime x_j^\prime,2\le i<j\le t+1\}.
\end{gather*}
The quotient matrix $\widetilde{B}$ of the adjacency matrix $A$ of $G$ with respect to the above partition $\pi$ is given as follows:
\begin{eqnarray}\label{partitionmatrix}
\widetilde{B}=\left(
\begin{array}{cccc}
1&2t& 2t &0  \\
2&2t-1&2&2t-2 \\
1&1&\alpha&4t-1-\alpha\\
0&2&\frac{2(4t-1-\alpha)}{t-1}&4t-1-\frac{2(4t-1-\alpha)}{t-1}
\end{array}
\right)
\end{eqnarray}
with $2t\leq\alpha\leq 2t+1$.

We will show that $\alpha=2t+1$, and hence the partition $\pi$ is an equitable partition of $G$.

To show this, note that by (\ref{ways}), we have
\begin{equation}\label{x11}\begin{split}
A_{(x_1,x_1^\prime)}^3&=24t+1-(5-2t)\lambda_{x_1,x_1^\prime}\\
              &=24t+1-(5-2t)\cdot2t\\
              &=4t^2+14t+1.
\end{split}\end{equation}

On the other hand,
\begin{equation}\label{x12}
A_{(x_1,x_1^\prime)}^3=4t+1+\sum\nolimits_{z\in G_1(x_1)\bigcap G_1(x_1^\prime)}\lambda_{x_1,z}+\sum\nolimits_{z\in G_2(x_1)\bigcap G_1(x_1^\prime)}\mu_{x_1,z},
\end{equation}
where $|G_1(x_1)\bigcap G_1(x_1^\prime)|=2t,~|G_2(x_1)\bigcap G_1(x_1^\prime)|=2t$ and
\begin{equation*}\begin{split}
\lambda_{x_1,z}=2t+1,&\text{ for } z\in G_1(x_1)\bigcap G_1(x_1^\prime),\\
3\leq\mu_{x_1,z}\leq 4,&\text{ for } z\in G_2(x_1)\bigcap G_1(x_1^\prime).
\end{split}
\end{equation*}

Then, from (\ref{x11}) and (\ref{x12}), we obtain that
$$\mu_{x_1,z}=4, \text{for }z\in G_2(x_1)\bigcap G_1(x_1^\prime),$$
which implies that $\alpha=2t+1$. Therefore, we have an equitable partition with partition diagram as shown in Figure \ref{partition}.
\begin{figure}[H]
\center
  \includegraphics[scale=0.8]{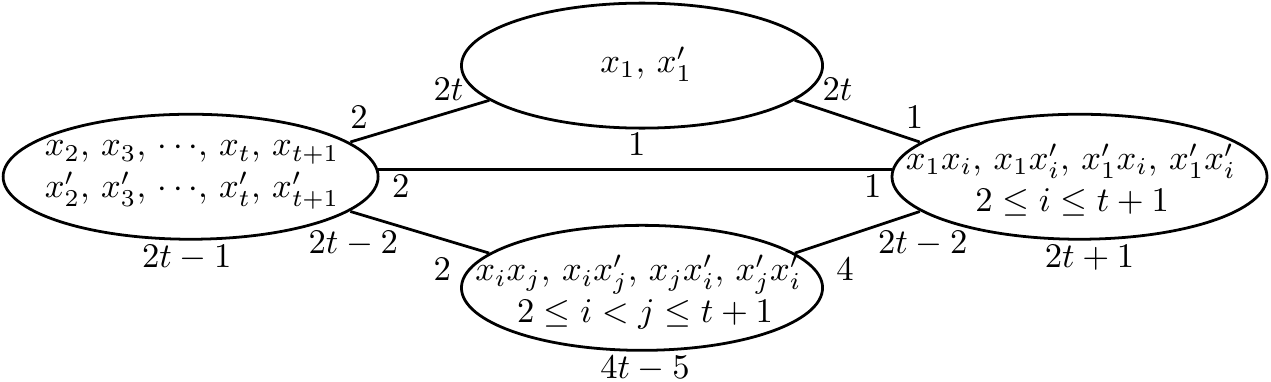}
  \caption{Equitable partition}
  \label{partition}
\end{figure}

In this case, the quotient matrix (\ref{partitionmatrix}) becomes
\begin{equation}
\widetilde{B}=\left(
  \begin{array}{cccc}
    1 & 2t & 2t & 0 \\
    2 & 2t-1 & 2 & 2t-2 \\
    1 & 1 & 2t+1 & 2t-2 \\
    0 & 2 & 4 & 4t-5
  \end{array}
\right)
\end{equation}
with eigenvalues $\big\{4t+1,2t-1,t-2\pm\sqrt{t^2-1}\big\}$.

From Lemma \ref{quotienteigenvalues1}, we find that the eigenvalues of $\widetilde{B}$ should be the eigenvalues of $A$. But $B$ has eigenvalues $t-2\pm\sqrt{t^2-1}$, which are not the eigenvalues of $A$. So we obtain a contradiction. This shows that the case $q_{2t}=0,~q_{2t+1}=2t+2,~q_{2t+2}=1$ is not possible. This concludes the proof to show that $G$ is the $2$-clique extension of the $(t+1)\times(t+1)$ grid.

%

\begin{remark}
Note that we used walk-regularity (which follows from the fact that the $2$-clique extension of the $(t+1)\times(t+1)$-grid is regular with exactly $4$ distinct eigenvalues) to show this result, and therefore it is not so clear how to extend this result to the $2$-clique extension of a non-square grid graph.
\end{remark}

\end{document}